\documentclass[12pt,openright,oneside,letterpaper,onecolumn]{report}  

\newcommand{\thesistitle}{Multiple Dirichlet Series for Affine Weyl Groups}
\newcommand{\thesisauthor}{Ian Whitehead}
\newcommand{\thesisyear}{2014}


\usepackage[dvips]{epsfig}
\usepackage{fancyhdr}
\usepackage{afterpage}
\usepackage{times}
\usepackage{chngcntr}
\usepackage{graphicx}
\usepackage{latexsym}
\usepackage{amsfonts}
\usepackage{amssymb}
\usepackage{amsmath}
\usepackage{amsthm}
\usepackage{bm}
\usepackage{psfrag}
\usepackage{url}
\usepackage{subfig}
\usepackage{graphicx}
\usepackage{enumitem}
\usepackage{mathdots}
\usepackage[section]{placeins}

\usepackage{graphics}
\usepackage{enumerate, color}


\counterwithout{figure}{chapter} \counterwithout{table}{chapter}

%
%

\counterwithout{figure}{chapter} \counterwithout{table}{chapter}

\paperwidth=8.5in
\paperheight=11in


\setlength{\hoffset}{0.0in}
\setlength{\oddsidemargin}{0.12in}
\setlength{\textwidth}{6.26in}
\setlength{\evensidemargin}{0.12in}

\setlength{\voffset}{0.0in}
\setlength{\topmargin}{.0in}
\setlength{\headheight}{14pt}
\setlength{\headsep}{22pt}
\setlength{\textheight}{8.5in}
\setlength{\footskip}{0pt}

\newcommand{\singlespace}{\renewcommand{\baselinestretch}{1.10} \small \normalsize}

\newcommand{\doublespace}{\renewcommand{\baselinestretch}{1.5} \small \normalsize}
\newcommand{\normalspace}{\doublespace}
\footnotesep=1\baselineskip

\setcounter{secnumdepth}{3}
\setcounter{tocdepth}{3}

\newcommand{\thesistitlepage}{
    \normalspace
    \thispagestyle{empty}
    \begin{center}
        \textbf{\LARGE \thesistitle} \\[1cm]
        \textbf{\LARGE \thesisauthor} \\[8cm]

        Submitted in partial fulfillment of the \\
        requirements for the degree \\
        of Doctor of Philosophy \\
        in the Graduate School of Arts and Sciences \\[4cm]
        \textbf{\Large COLUMBIA UNIVERSITY} \\[5mm]
        \thesisyear
    \end{center}
    \clearpage
}

\newcommand{\thesiscopyrightpage}{
    \thispagestyle{empty}
    \strut \vfill
    \begin{center}
      \copyright \thesisyear \\
      \thesisauthor \\
      All Rights Reserved
    \end{center}
    \cleardoublepage
}

\newcommand{\thesisabstract}{
    \thispagestyle{empty}
    \begin{center}
    \textbf{\large ABSTRACT} \\[1cm]
     \textbf{\large \thesistitle} \\[1cm]
     \textbf{\large \thesisauthor} \\[1cm]
    \end{center}
Let $W$ be the Weyl group of a simply-laced affine Kac-Moody Lie group, excepting $\tilde{A}_n$ for $n$ even. We construct a multiple Dirichlet series $Z(x_1, \ldots x_{n+1})$, meromorphic in a half-space, satisfying a group $W$ of functional equations. This series is analogous to the multiple Dirichlet series for classical Weyl groups constructed by Brubaker-Bump-Friedberg, Chinta-Gunnells, and others. It is completely characterized by four natural axioms concerning its coefficients, axioms which come from the geometry of parameter spaces of hyperelliptic curves. The series constructed this way is optimal for computing moments of character sums and L-functions, including the fourth moment of quadratic L-functions at the central point via $\tilde{D}_4$ and the second moment weighted by the number of divisors of the conductor via $\tilde{A}_3$. We also give evidence to suggest that this series appears as a first Fourier-Whittaker coefficient in an Eisenstein series on the twofold metaplectic cover of the relevant Kac-Moody group. The construction is limited to the rational function field $\mathbb{F}_q(t)$, but it also describes the $p$-part of the multiple Dirichlet series over an arbitrary global field.
    \cleardoublepage
}


\newtheorem{theorem}{Theorem}[section]
\newtheorem{thm}[theorem]{Theorem}

\newtheorem{cor}[theorem]{Corollary}

\newtheorem{lemma}[theorem]{Lemma}

\newtheorem{prop}[theorem]{Proposition}

\newtheorem{conj}[theorem]{Conjecture}
\newtheorem{property}[theorem]{Property}
\newtheorem{axiom}[theorem]{Axiom}
\newtheorem{condition}[theorem]{Condition}
\theoremstyle{definition}

\theoremstyle{remark}

\numberwithin{equation}{section}
\newtheoremstyle{dotless}{}{}{}{}{\bfseries}{}{ }{}
\theoremstyle{dotless}


\newcommand{\C}{\mathbb{C}}

\newcommand{\Q}{\mathbb{Q}}
\newcommand{\Z}{\mathbb{Z}}

\newcommand{\F}{\mathbb{F}}


\newcommand{\x}{{\mathbf {x}}}


\newcommand{\res}[2]{\left(\frac{#1}{#2}\right)}


\newcommand{\sgn}{{\mathrm {sgn}}}

\renewcommand{\tilde}{\widetilde}


\begin{document}
\pagestyle{empty}

\thesistitlepage 

\thesiscopyrightpage

\thesisabstract

\pagenumbering{roman} \pagestyle{plain}

\setcounter{tocdepth}{1}
\renewcommand{\contentsname}{Table of Contents}
\tableofcontents 

\cleardoublepage

\listoffigures
\addcontentsline{toc}{section}{List of Figures}
\cleardoublepage


\chapter*{Acknowledgements}
This dissertation would not exist without the guidance of two exceptional advisors: Dorian Goldfeld and Adrian Diaconu. Adrian shared his vision of axiomatic Kac-Moody multiple Dirichlet series with me, which became the foundation of this project. He was endlessly generous with his time and brilliant ideas, sharing them in Providence, Banff, and Minneapolis, on freezing cigarette breaks and over plates of buffalo wings. Dorian offered his unparalleled mathematical perspective and wisdom, urging me always to simplify the problem and hone in on the essential details. He was incredibly supportive--I could go into his office in despair, and emerge feeling on the cusp of a breakthrough. I thank Gautam Chinta for many thoughtful suggestions on the project, and for valuable comments on drafts of this paper. I had enlightening conversations with Jeff Hoffstein, Ben Brubaker, Kyu-Hwan Lee, Manish Patnaik, and Jordan Ellenberg, among many others. Anna Pusk\'{a}s suggested the crucial step in the proof in Chapter 6, and was a much-needed sounding board and a great friend throughout this process. I am grateful to my fellow graduate students at Columbia University, and to the participants in a special semester program at ICERM on ``Automorphic Forms, Combinatorial Representation Theory and Multiple Dirichlet Series,'' where much of this work took shape. 
\cleardoublepage

\pagestyle{headings} \pagenumbering{arabic}

%
\setlength{\textheight}{8.5in} \setlength{\footskip}{.5in}

\fancypagestyle{plain} {%
\fancyhf{} \fancyfoot[C]{\thepage} \fancyhead[RE,LO]{\itshape \leftmark}
\renewcommand{\headrulewidth}{0pt}
} \pagestyle{plain}

\chapter{Introduction}\label{chap:introduction}
\section{Motivations}

The goal of this paper is to define and construct multiple Dirichlet series associated to affine Kac-Moody Lie groups over the rational function field $\F_q(t)$. These will be power series in several complex variables, with meromorphic continuation to a half-space, and an infinite group of symmetries isomorphic to the Kac-Moody Weyl group. They generalize the multiple Dirichlet series for finite Weyl groups which have been thoroughly studied elsewhere. However, there are important distinctions between the finite and Kac-Moody cases, necessitating a new approach. This paper tests one such approach, and makes a first foray into new territory for multiple Dirichlet series.

The original goal of multivariable Dirichlet series is to parametrize a family of L-functions. The technique of studying L-functions on average across a family has yielded tremendous progress, from the Bombieri-Vinogradov theorem to recent work of Bhargava and his collaborators. One primary focus has been on $r$th moment problems, concerning averages (or weighted averages) of L-functions at the central point
\begin{equation}
\frac{\sum_{L \in \mathcal{F}} L(1/2)^r}{\sum_{L \in \mathcal{F}} 1}
\end{equation}
considered in limit as the size of the family $\mathcal{F}$ approaches infinity. The Katz-Sarnak philosophy, that random L-functions are modeled by characteristic polynomials of random matrices, leads to moment conjectures in a broad variety of cases \cite{CFKRS, KS}. There are many approaches to proving these conjectures, including trace formulas, approximate functional equations, and multiple Dirichlet series. Typically, the first few moments can be computed, but higher moment conjectures remain open. 

The first example of a multiple Dirichlet series, now understood as the $A_2$ series, is essentially
\begin{equation}
Z(s, t) = \sum_{m, n} \res{m}{n} m^{-s}n^{-t} = \sum_{m} L(t, \chi_m) m^{-s}
\end{equation}
where $\res{\,}{\,}$ denotes the quadratic residue symbol, and $\chi_m$ the equivalent quadratic character. This appears in a different guise in the work of Siegel \cite{S}. Goldfeld and Hoffstein described it as the Mellin transform of an Eisenstein series of half-integral weight on GL(2), and used it to compute the first moment in the family of quadratic L-functions \cite{GH}. Other series compute the second \cite{BFH3} and third \cite{DGH} moments. These moments were originally computed in other ways, by Jutila for $r=1, 2$ \cite{J} and Soundararajan for $r=3$ \cite{So}. For other moment computations via multiple Dirichlet series, see \cite{BrFH}, \cite{BFH1}, and \cite{BFH2}. However, regardless of the strategy, the fourth moment seems out of reach. From the multiple Dirichlet series perspective, the difficulty is transitioning from finite to infinite Weyl groups of functional equations. In \cite{BD}, Bucur and Diaconu construct a series with affine Weyl group $\tilde{D}_4$ of functional equations, and use it to compute the fourth moment of quadratic L-functions in the rational function field $\F_q(t)$. Theirs is the only affine Weyl group multiple Dirichlet series currently in the literature, and this paper generalizes their work. The constructions here are still limited to the rational function field, but generalized to arbitrary affine Weyl groups, which allow many new moment computations of equivalent difficulty. These are some of the first applications of Kac-Moody groups to number theory.

There is now a rich literature that treats multiple Dirichlet series as objects of intrinsic interest, aside from applications to analytic number theory. To guarantee the desired functional equations, one must replace certain sums of L-functions with weighted sums; the problem of choosing weights leads to important questions in combinatorial representation theory. Weyl group multiple Dirichlet series are constructed in papers of Brubaker, Bump and Friedberg \cite{BBF1, BBF2}, and Chinta and Gunnells \cite{CG1, CG2}. The input is a global field $k$, an integer $n$, and a root system with Weyl group $W$; the output is a multivariable Dirichlet series constructed out of Gauss sums for order $n$ characters on $k$. The series has a finite group of functional equations isomorphic to $W$. Its ``$p$-part,'' which describes its weighting, can be interpreted as a sum on a crystal, or as a deformation of the Weyl character formula for $W$. One hopes that all of this machinery can eventually be extended to affine and arbitrary Kac-Moody Weyl groups, with the present work (which has $k=\F_q(t)$ and $n=2$) as a template.

The Eisenstein conjecture explains the link between multiple Dirichlet series and automorphic forms. It states that each Weyl group multiple Dirichlet series appears as the first Fourier-Whittaker coefficient in an Eisenstein series on the metaplectic $n$-fold cover of the algebraic group over $k$ associated to the root system. This is proven for root systems of type A \cite{BBF1} and B \cite{FZ}. It is too early to state a generalization of this conjecture to Kac-Moody groups. Eisenstein series on metaplectic covers of Kac-Moody groups have not yet been constructed. However, nonmetaplectic Kac-Moody Eisenstein series are now an area of active research. Recent work of Braverman, Garland, Kazhdan, Miller, and Patnaik makes progress constructing Eisenstein series on affine Kac-Moody algebras over function fields \cite{BK, BGKP, G, GMP}. Completely understanding one Whittaker coefficient of a conjectural Eisenstein series could shed light on the series as a whole. For example, the analytic behavior of a Whittaker coefficient models the behavior of the full Eisenstein series via Maass-Selberg type relations. There is some evidence to suggest that the series presented here are the correct ones for a hypothetical generalization of the Eisenstein conjecture.

\section{Methods and Main Results}

Let $q$ be a prime power congruent to $1$ modulo $4$. Let $W$ be a simply-laced affine Weyl group represented by one of the Dynkin diagrams:

\includegraphics[scale=.3]{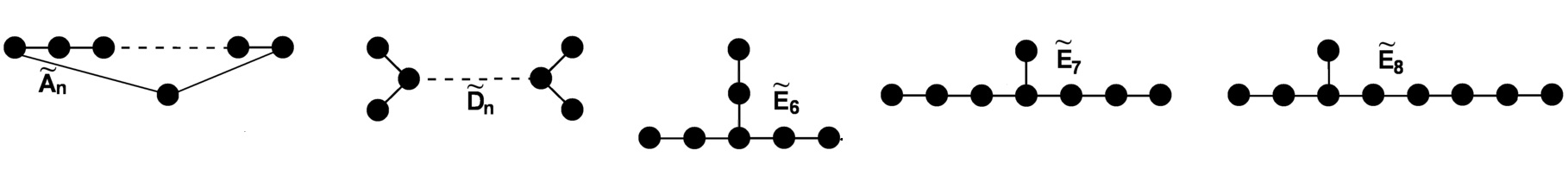}

\noindent and label the vertices $1$ to $n+1$. If the root system is type $\tilde{A}_n$, then we must assume for technical reasons that $n$ is odd. Write $i\sim j$ if vertices $i$ and $j$ are adjacent. Then the quadratic $W$ multiple Dirichlet series over the rational function field $\F_q(t)$ is roughly:
\begin{equation}\label{Z}
Z(x_1, \ldots x_{n+1})=\sum_{f_1, \ldots f_{n+1} \in \mathbb{F}_q[t] \text{ monic}} \left( \prod_{i \sim j} \left(\frac{f_i}{f_j}\right)\right) x_1^{\deg f_1}\cdots x_{n+1}^{\deg f_{n+1}}
\end{equation}
where we have replaced the usual variable $q^{-s_i}$ with $x_i$. To make this precise, the product of residue symbols should be replaced by a certain weighted term in cases where the $f_i$ are not squarefree or coprime. The main theorem of this paper is that the choice of weights and the resulting series are uniquely determined by four natural axioms (\ref{twistedmult}, \ref{localglobal}, \ref{dominance}, \ref{initialconditions}). The series has meromorphic continuation to a half-space, with group of functional equations $W$. In the case of $\tilde{A}_n$, we prove meromorphicity in the largest possible domain, which corresponds to the Tits cone of $W$; in other types, we only prove meromorphicity in a smaller half space, but we give a conjecture which implies meromorphic continuation to the optimal domain. 

There are other multivariable functions with the same domain of meromorphic continuation and the same group of functional equations, but the series constructed here is optimal for computing analytic data on character sums and L-functions. Because of its natural axioms, which arise from algebraic geometry, and because it has certain poles which will be described below, this series also seems like the correct one to satisfy the Eisenstein conjecture--that is, to be a Whittaker coefficient in a metaplectic Kac-Moody Eisenstein series. The theorem is limited to the rational function field $\mathbb{F}_q(t)$, but the proof resolves all combinatorial questions involved in constructing analogous series over any global field; meromorphic continuation is the only remaining obstacle, and this is known to be an extremely difficult problem. 

To contrast this theorem with the case of finite Weyl groups: for finite $W$, the series $Z$ is completely determined by its expected functional equations; it has meromorphic continuation to all of $\mathbb{C}^n$, and in fact is a rational function in the variables $x_i$. For affine Weyl groups, all of this breaks down. The series cannot be rational because infinitely many functional equations mean infinitely many poles; moreover the poles accumulate at essential singularities along the boundary of the Tits cone. The series is not uniquely determined by its functional equations--we will show that it is determined up to a meromorphic function of one variable. One needs a completely new strategy to choose the correction terms in a canonical way. Lee and Zhang generalize the averaging method of Chinta and Gunnells to construct a series for every symmetrizable Kac-Moody algebra \cite{LZ}. This has the desired functional equations, but it does not naturally contain character sums or L-functions. Bucur and Diaconu construct their $\tilde{D}_4$ series by making an assumption about its residue at one pole \cite{BD}. Their series is closely related to the one constructed here, and satisfies the first two axioms. However, it does not satisfy the third axiom, and likely will not fulfill the Eisenstein conjecture. 

In order to state the axioms I will use some additional notation. In the definition of the series (\ref{Z}), we replace $\prod_{i \sim j} \left(\frac{f_i}{f_j}\right)$ with $H(f_1, \ldots f_{n+1})\in \C$, which includes the weights mentioned above. Let $c_{a_1, \ldots a_{n+1}}(q) \in \C$ be a power series coefficient of $Z$, so that 
\begin{equation}
c_{a_1, \ldots a_{n+1}}(q)=\sum_{\substack{f_i \in \mathbb{F}_q[t] \text{ monic,} \\ \deg(f_i)=a_i}} H(f_1, \ldots f_{n+1}).
\end{equation}
The axioms concern the behavior of the weights $H$ and coefficients $c$ as the underlying finite field $\F_q$ varies. The first axiom is twisted multiplicativity for the $H$ terms: if we assume that $\gcd(f_1\cdots f_{n+1}, f_1'\cdots f_{n+1}')=1$, then
\begin{equation}
H(f_1 f_1', \ldots f_{n+1} f_{n+1}')=H(f_1, \ldots f_{n+1}) H(f_1', \ldots f_{n+1}')\prod_{i \sim j} \left(\frac{f_i}{f_j}\right)
\end{equation}
so it suffices to describe $H(p^{a_1}, \ldots p^{a_{n+1}})$ for $p$ prime. This condition is familiar from the theory for finite Weyl groups, but the next two axioms are new, hypothesized by Diaconu and Pasol in a forthcoming paper \cite{DP}. They can be proved as propositions in the finite Weyl group case. The second axiom is a local-to-global property: the terms $c_{a_1, \ldots a_{n+1}}(q)$ and $H(p^{a_1}, \ldots p^{a_{n+1}})$ are polynomials in $q$ and $|p|:=q^{\deg p}$ respectively, and 
\begin{equation}
H(p^{a_1}, \ldots p^{a_{n+1}})=|p|^{a_1+\cdots +a_{n+1}} c_{a_1, \ldots a_{n+1}}(1/|p|).
\end{equation}
The third axiom is a dominance condition: $c_{a_1, \ldots a_{n+1}}(q)$ has nonzero terms only in degrees  $(a_1+\cdots+a_{n+1}+1)/2<d\leq a_1+\cdots+a_{n+1}$, so the degree of $H(p^{a_1}, \ldots p^{a_{n+1}})$ is less than $(a_1+\cdots+a_{n+1}-1)/2$. This means in practice that the contribution of correction factors in the character sums $c$ is as small as possible. The final axiom is just a normalization condition, that $H(1, \ldots 1, f, 1, \ldots 1)=1$.

We will briefly outline the geometric meaning of Axioms 2 and 3. For more details, we refer the reader to the work of Diaconu and Pasol. The sums 
\begin{equation}
\sum_{\substack{f_i \in \mathbb{F}_q[t] \text{ monic,}} \\ \deg(f_i)=a_i} \left( \prod_{i \sim j} \left(\frac{f_i}{f_j}\right)\right)
\end{equation}
count points on a certain variety over $\F_q$; for each $i \in \lbrace 1, \ldots n+1 \rbrace$, this variety is a cover of the parameter space of hyperelliptic curves $y^2=\prod_{j\sim i} f_j(t)$. The weights $H(f_1, \ldots f_{n+1})$ are best understood as  counting points when this variety is desingularized and compactified. Axioms 2 and 3 translate into statements about the cohomology of the nonsingular, compact variety: Axiom 2 is a duality statement, and Axiom 3 is a cohomological purity statement. 

Diaconu and Pasol study series corresponding to Dynkin diagrams of the form:

\includegraphics[scale=.2]{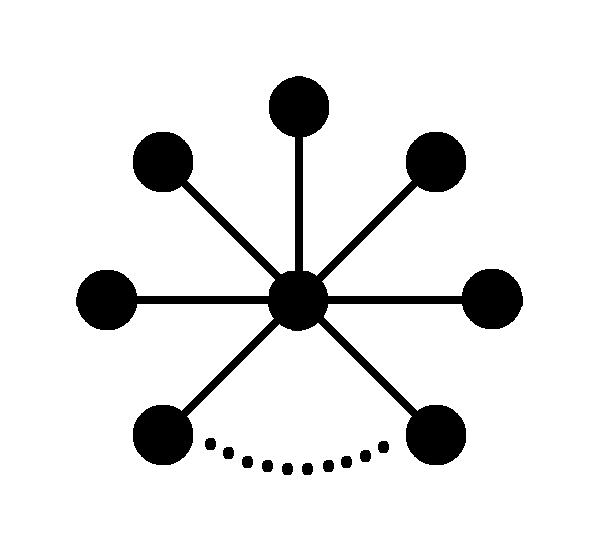}

\noindent If meromorphically continued, these series would compute the $n$th moment of quadratic Dirichlet L-functions over function fields. The diagram gives a Kac-Moody Lie group for $n\geq 4$, and the groups become extremely complicated as $n$ grows. For concreteness, consider the coefficient $c_{1,\ldots 1, 3}(q)$ in such a series. This is roughly:
\begin{equation}
\sum_{\deg f_1=\ldots=\deg f_n=1, \deg f_{n+1}=3} \left( \frac{f_{n+1}}{f_1\cdots f_n} \right)=\sum_{\deg f_{n+1}=3}(\sum_{x \in \mathbb{F}} \chi(f_{n+1}(x)))^n
\end{equation}
or the $n$th moment of the trace of Frobenius in the family of elliptic curves $y^2=f_{n+1}(t)$ over the fixed finite field $\F_q$. Birch studies these moments via the Eichler-Selberg trace formula \cite{B}. The first 9 moments are polynomials in $q$, but the 10th is not--it includes the Ramanujan tau function. This corresponds to the appearance of higher-dimensional Galois representations in the cohomology of the underlying variety. In such cases, the axioms must be relaxed so that $H$ and $c$ are no longer polynomials, but still consist of terms which have well-defined degree and can be evaluated at negative powers of a prime $p$. For example, the degree of $\tau$ is taken to be $\frac{11}{2}$, and we set $\tau(p^{-1})=p^{-11}\tau(p)$. The underlying geometry here is not fully understood. For example, what are the simplest Kac-Moody Weyl groups for which nonpolynomial coefficients must appear? Polynomial coefficients are sufficient for affine Weyl groups, but this intriguing question remains unanswered.

The first tool in the proof of the main theorem is a detailed study of the functional equations of $Z(x_1, \ldots x_{n+1})$, which are verified directly from the axioms. The functional equations induce recursive formulas for the coefficients of the series; this recursion can be solved up to the choice of a one-parameter family of diagonal coefficients $c_{m\alpha_0}$, where $m \in \Z_{\geq 0}$ and $\alpha_0$ is the minimal imaginary root of the affine root system. The diagonal coefficients are analogous to the central coefficients in function field Dirichlet L-functions; they are the most difficult character sums to compute by hand. Determining them uniquely requires the full strength of Axioms 2 and 3. In the diagonal coefficients, it is possible to observe the effect of imaginary roots on the multiple Dirichlet series, and, one hopes, on the Kac-Moody Eisenstein series.

The second tool in the proof is to take residues of $Z(x_1, \ldots x_{n+1})$, setting various coefficients $x_i=q^{-1}$. Crucially, certain residues of the series are Euler products, with multiplicative rather than twisted multiplicative coefficients. This simplifies the series enough to be written down explicitly, as an infinite product of function field zeta functions. The local-to-global property leads to a symmetry in the residue, and dominance together with the functional equations determines it uniquely. We compute the full residue in type $\tilde{A}_n$ with $n$ odd, and compute the residue up to a diagonal factor in all types. For example, in the case of $\tilde{A}_3$, with the vertices of the Dynkin diagram labeled from $1$ to $4$ cyclically, we prove the following residue formula:
\begin{align} \label{A3residue}
(-q)^{(n+1)/2} \text{Res}_{x_2=x_4=q^{-1}} Z(x_1, x_2, x_3, x_4)=\prod \limits_{m=0}^{\infty}
&(1-x_1^{2m+2}x_3^{2m})^{-1}(1-qx_1^{2m+2}x_3^{2m})^{-1} \nonumber \\
&(1-x_1^{2m}x_3^{2m+2})^{-1}(1-qx_1^{2m}x_3^{2m+2})^{-1} \nonumber \\
&(1-x_1^{2m+2}x_3^{2m+2})^{-2}(1-qx_1^{2m+2}x_3^{2m+2})^{-2} \nonumber \\
&(1-x_1^{2m+1}x_3^{2m+1})^{-1}(1-qx_1^{2m+1}x_3^{2m+1})^{-1}
\end{align}
The full series can be recovered from the residue. 

Let us sketch a possible arithmetic application of this formula. We may interpret the $\tilde{A}_3$ series as roughly:
\begin{equation}
\sum_{f_1, f_2, f_3, f_4} \res{f_1 f_3}{f_2 f_4} x_1^{-\deg f_1}\cdots x_4^{-\deg f_4} = \sum_{f_2, f_4} L(x_1, \chi_{f_2 f_4})L(x_3, \chi_{f_2 f_4}) x_2^{-\deg f_2} x_4^{-\deg f_4}.
\end{equation}
We set $x_2=x_4=x$, multiply by $x^{-d-1}$ and take $\frac{1}{2 \pi i} \int_{|x|=\epsilon}$. This integral can be evaluated by expanding the circle $|x|=\epsilon$ across the pole $x=q^{-1}$, where we gain the residue \ref{A3residue}. We obtain a formula for the sum
\begin{equation}
\sum_{\deg{f}=d}\sigma_0(f)L(x_1, \chi_{f})L(x_3, \chi_{f})
\end{equation}
where $\sigma_0(f)$ is the number of divisors of $f$. Evaluating at $x_1=x_3=q^{-1/2}$ gives the second moment of quadratic L-functions over $\F_q(t)$ with conductor of degree $d$, weighted by the number of divisors of the conductor. It is possible to sieve for squarefree conductors as well.

The residue also contains evidence related to the Eisenstein conjecture. The first five factors in (\ref{A3residue}) correspond to positive real roots in the $\tilde{A}_3$ root system. The last three factors, however, correspond to imaginary roots. Eisenstein series on Kac-Moody algebras, and hence their Whittaker functions, are expected to have poles corresponding to all roots, real and imaginary. The contribution of imaginary roots is subtle and difficult to detect. We cannot completely determine the poles of $Z(x_1, \ldots x_{n+1})$ corresponding to imaginary roots, since some of them may be canceled out in the residue. However, we can assert that such poles exist. They do not appear in the Bucur-Diaconu $\tilde{D}_4$ series. Their presence here suggests that the four axioms produce series which could fulfill the Eisenstein conjecture. 

\section{Further Directions}

The first problem arising from this work is to prove Conjecture \ref{R1}, which gives explicit formulas for the diagonal parts of the residues in all types.  Meromorphic continuation of the series $Z(x_1, \ldots x_{n+1})$ to its largest possible half-space will follow immediately. A second task is to prove the main theorem in type $\tilde{A}_n$ with $n$ even. The difficulty here is that the method of studying a residue with an Euler product formula may not apply. This should be an inconvenience rather than a fundamental obstruction; one can still study a residue whose terms satisfy a very simple twisted multiplicativity property. A third natural extension of the theorem is to affine root systems which are not simply-laced: $\tilde{B}_n$, $\tilde{C}_n$, $\tilde{F}_4$, and $\tilde{G}_2$. 

To generalize further, following the theory for finite Weyl groups, one could replace the quadratic residue symbols in (\ref{Z}) with $m$th power residue symbols, or Gauss sums; in the Eisenstein conjecture, this means constructing a Whittaker function on the $m$-fold metaplectic cover of the Kac-Moody algebra. It remains to be seen whether the four axioms, suitably modified, still yield a canonical choice for this series. A separate project is to generalize the construction to arbitrary global fields. In this case the $p$-parts of the series, i.e. the weights $H(p^{a_1}, \ldots p^{a_{n+1}})$, match the rational function field construction, but the global series is quite different. The residue formulas should generalize straightforwardly to any global field, with the Dedekind zeta function of the field replacing the function field zeta function $(1-qx)^{-1}$. One could consider arbitrary function fields, following the work of Hoffstein and Rosen \cite{HR} and Fisher and Friedberg \cite{FF1, FF2}. The multiple Dirichlet series will still be power series, now with finitely many one-parameter families of undetermined coefficients. It is possible that meromorphic continuation of the residue is enough to imply meromorphic continuation of the full series. Unfortunately, this line of reasoning breaks down completely over number fields. Over $\Q$, for example, meromorphic continuation seems out of reach at present; it may not be proven until the theory of Kac-Moody Eisenstein series is fully developed.

Finally, this work could be generalized towards non-affine Kac-Moody groups. These are necessary to compute fifth and higher moments of quadratic L-functions. Here the situation is very complex. The functional equations have even less control over the shape of the series, but the Diaconu-Pasol axioms should still guarantee uniqueness in many cases. One fundamental question, discussed above, is to determine when the coefficients of the series will no longer be polynomials. This would create major obstacles to the methods of this paper. On the other hand, better-understood Kac-Moody algebras, like hyperbolic ones, may still generate polynomial coefficients, and could be tractable. One could also attempt to study properties of Kac-Moody Whittaker functions abstractly--for example, what differential equations must they satisfy?--and thereby accumulate further evidence for the Eisenstein conjecture without actually constructing Kac-Moody Eisenstein series. 

\section{Background and Notation: Function Field Dirichlet L-Functions}

Let $q$ be an odd prime power, $\F_q$ a finite field, and $\F_q[t]$ its polynomial ring. 

We will call an element $p$ of $\F_q[t]$ prime if it is monic, nonconstant, and irreducible. For $p\in \F_q[t]$ prime, and any $g \in \F_q[t]$ we define the quadratic residue symbol, or quadratic character modulo $p$, as
\begin{equation}
\res{p}{g} = \chi_p(g):= \left\lbrace \begin{array}{ll} 1 & g \text{ square modulo } p \\ -1 & g \text{ not a square modulo } p \\ 0 & g \equiv 0 \text{ modulo } p. \end{array} \right.
\end{equation} 
For any nonzero $f\in \F_q[t]$, we define $\sgn(f)$ to be $1$ if the leading coefficient of $f$ is a square in $\F_q^*$, and $-1$ if it is not a square. For $f \in \F_q^*$ constant, we set
\begin{equation}
\res{f}{g} = \chi_f(g):= \sgn(f)^{\deg g}.
\end{equation} 
These are multiplicative functions of $g$; we extend to arbitrary $f \in \F_q[t]$ by multiplicativity as well:
\begin{equation}
\res{f_1f_2}{g}=\res{f_1}{g}\res{f_2}{g}.
\end{equation}
Then we have the all-important quadratic reciprocity law:
\begin{equation}
\res{f}{g}=(-1)^{(q-1)(\deg f)(\deg g)/2}\sgn(f)^{\deg g}\sgn(g)^{\deg f} \res{g}{f}
\end{equation}
or if $q \equiv 1 \mod 4$ (which we will assume below) and $f, g$ monic, simply
\begin{equation}
\res{f}{g}=\res{g}{f}.
\end{equation}

Next, we define zeta and L-functions over $\F_q[t]$. These are typically written as series in the variable $q^{-s}$ to highlight parallels with L-functions over number fields, but for our purposes it is more convenient to use the variable $x$. Let 
\begin{equation} 
\zeta(x):=\sum_{g\in \F_q[t] \text{ monic}} x^{\deg g} = \prod_{p\in \F_q[t] \text{ prime}} (1-x^{\deg p})^{-1}.
\end{equation}
This zeta function may be computed explicitly as $\zeta(x)=(1-qx)^{-1}$, since there are $q^d$ monic polynomials of degree $d$. Hence we automatically have meromorphic continuation to all $x\in \C$, with a functional equation 
\begin{equation}
(1-x)^{-1}\zeta(x)=q^{-1}x^{-2} (1-q^{-1}x^{-1})\zeta(q^{-1}x^{-1}).
\end{equation}
We also have the Riemann hypothesis, trivially, since $\zeta(x)$ has no zeroes at all.

For $f\in \F_q[t]$ squarefree, define the quadratic Dirichlet L-function with conductor $f$ as 
\begin{equation} \label{L-function}
L(x, \chi_f):=\sum_{g \in \F_q[t] \text{ monic}} \chi_f(g) x^{\deg g} = \prod_{p\in \F_q[t] \text{ prime}} (1-\chi_f(p)x^{\deg p})^{-1}.
\end{equation}
If $f \in \F_q^*$ is constant, then we have $L(x, \chi_f)=\zeta(\sgn(f)x)$. Otherwise, $L(x, \chi_f)$ is a polynomial in $x$, whose degree is $\deg f -1$. This follows from the orthogonality relation for the nontrivial character $\chi_f$. We have a functional equation as follows: if $\deg f$ is odd, then
\begin{equation}
L(x, \chi_f) = (q^{1/2}x)^{\deg f -1} L(q^{-1}x^{-1}, \chi_f),
\end{equation}
and if $\deg f$ is even, then
\begin{equation}
(1-\sgn(f)x)^{-1}L(x, \chi_f)= (q^{1/2}x)^{\deg f -2} (1-\sgn(f)q^{-1}x^{-1})^{-1}L(q^{-1}x^{-1}, \chi_f).
\end{equation}
This functional equation is a consequence of the Weil conjectures for curves, since one can show that $\zeta(x)L(x, \chi_f)=\zeta_C(x)$, where $C$ is the hyperelliptic curve $y^2=f(t)$. We also have the Riemann hypothesis for $L(x, \chi_f)$: all roots have $|x|=q^{-1/2}$. This implies a bound of $\binom{\deg f -1}{a} q^{-a/2}$ on the $x^a$ coefficient of $L(x, \chi_f)$. All these properties are developed in detail in \cite{R}. For the proof of the Weil conjectures for all varieties over finite fields, see the work of Deligne \cite{D1, D2}.

If $f \in \F_q[t]$ is not squarefree, then the definition (\ref{L-function}) gives only a partial Euler product. It is more natural to take the L-function of the character $\chi_{f_0}$, where $f_0$ is the squarefree part of $f$. However, this means that the degree of the L-function is no longer the expected $\deg f -1$, and the functional equation is correspondingly different. This motivates the need for weighted sums of L-functions and weighted sums of quadratic characters, which are introduced below.

\section{Background and Notation: Affine Root Systems}

The representation theory and combinatorics of affine Kac-Moody root systems is a vast and important subject. We will confine our discussion to properties of the Weyl group and root system used in this paper. For a more general introduction to the subject, see \cite{H}. For complete details, see \cite{Bo}. One particularly beautiful and relevant application of this theory is to the Macdonald identities for the affine Weyl denominator \cite{M}.

We begin with one of the simply-laced affine Dynkin diagrams $\tilde{A}_n$, $\tilde{D}_n$, $\tilde{E}_6$, $\tilde{E}_7$, or $\tilde{E}_8$, with vertices labeled from $1$ to $n+1$. For convenience, we fix the following labelings:

\includegraphics[scale=.5]{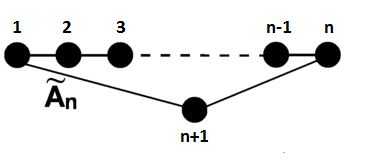} \includegraphics[scale=.5]{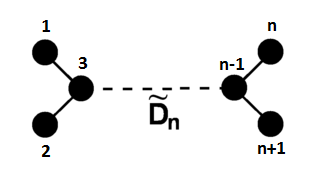}

\includegraphics[scale=.5]{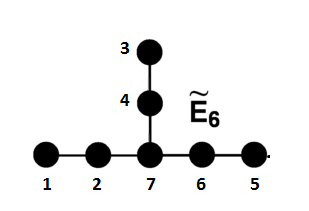} \includegraphics[scale=.5]{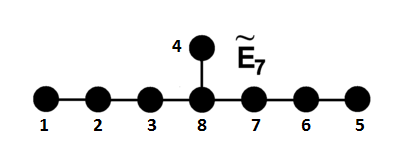}

\includegraphics[scale=.5]{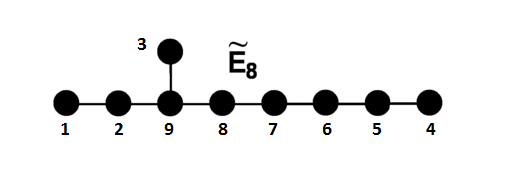}

We regard the labels as integers modulo $n+1$ in the $\tilde{A}_n$ case. Write $i\sim j$ for adjacent (distinct) vertices. Then the affine Weyl group associated to the diagram is 
\begin{equation}
W=<\sigma_1, \ldots \sigma_{n+1}\, : \, \sigma_i^2=1, \, \sigma_i\sigma_j\sigma_i=\sigma_j\sigma_i\sigma_j \text{ for } i\sim j, \, \sigma_i\sigma_j=\sigma_j \sigma_i \text{ for }i \not\sim j>.
\end{equation}
This is a Coxeter group which can be constructed as the semidirect product of a finite Weyl group with its coroot lattice. For $w \in W$, $\ell(w)$ is the length of any minimal expression $w=\sigma_{i_1} \sigma_{i_2}\cdots \sigma_{i_\ell(w)}$.

The Weyl group acts naturally on a vector space spanned by simple roots $\lbrace e_1, \ldots e_{n+1} \rbrace$. The action is defined as follows:
\begin{equation}
\sigma_i(e_j)=\left\lbrace \begin{array}{cc} -e_j & j=i \\ e_j+e_i & i \sim j \\ e_j & i\not\sim j \end{array} \right.
\end{equation}
and extended by linearity. One can check that this is a well-defined left action. The set of (real) roots $\Phi$ is the orbit of the simple roots under this action. It is an infinite set, contained in the root lattice $\Lambda=\bigoplus_{i=1}^{n+1}\Z e_i$. 

The height of a root $\alpha=\sum a_i e_i$ is $\text{ht}(\alpha)=\sum a_i$. We have a partial ordering on the roots: for $\alpha=\sum a_i e_i$, $\beta=\sum b_i e_i \in \Phi$, write $\alpha \leq \beta$ if all $a_i \leq b_i$. Define the set $\Phi^+$ of positive roots as $\lbrace \alpha \geq 0 \rbrace$ and the set $\Phi^-$ of negative roots as $\lbrace \alpha \leq 0 \rbrace$. It is a fact that every root is either positive or negative, and multiplication by $-1$ is an involution of $\Phi$. Furthermore, for $w \in W$, we may define $\Phi(w):= \Phi^+ \cap w^{-1}(\Phi^-)$. This is a finite set, whose cardinality is $\ell(w)$.

There is a unique linear subspace of the root space which is invariant under the action of the Weyl group. It is possible to find a minimal positive vector of the root lattice, $\alpha_0$, which lies in this subspace. We refer to $\alpha_0$ as the imaginary root, and the set of integer multiples of $\alpha_0$ as the set of imaginary roots. The following table gives the coordinates of $\alpha_0$ in each type:

\begin{tabular}{l | r}
Type & $\alpha_0$ \\
\hline
$\tilde{A}_n$ & $(1, 1, \ldots 1)$ \\
$\tilde{D}_n$ & $(1, 1, 2, 2, \ldots 2, 1, 1)$ \\
$\tilde{E}_6$ & $(1, 2, 1, 2, 1, 2, 3)$ \\
$\tilde{E}_7$ & $(1, 2, 3, 2, 1, 2, 3, 4)$ \\
$\tilde{E}_8$ & $(2, 4, 3, 1, 2, 3, 4, 5, 6)$ \\
\end{tabular}

\noindent The imaginary roots are analogous to real roots, but there are many distinctions. When we refer to ``roots'' below, we mean real roots only. Our convention is that $\alpha_0 \not\in \Phi$. The role of imaginary roots will almost always have to be treated separately.

The following proposition classifies real roots:
\begin{prop}
Let $\Psi$ be the finite set of roots $\alpha$ with $0\leq \alpha \leq \alpha_0$. Then the full set of roots is $\lbrace \alpha + m\alpha_0 : \alpha \in \Psi, m \in \Z \rbrace$. 
\end{prop}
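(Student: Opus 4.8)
The plan is to reduce the statement to two facts: (i) the imaginary root $\alpha_0$ is fixed by every element of $W$, and (ii) $\Phi$ is stable under adding integer multiples of $\alpha_0$, i.e.\ $\beta + m\alpha_0 \in \Phi$ whenever $\beta \in \Phi$ and $m \in \Z$. First note that $\Psi$ is indeed finite, as it sits inside the finite set of lattice points $\alpha$ with $0 \leq \alpha \leq \alpha_0$. Granting (i) and (ii), the inclusion $\{\alpha + m\alpha_0 : \alpha \in \Psi,\, m \in \Z\} \subseteq \Phi$ is immediate from (ii) together with $\Psi \subseteq \Phi$. For the reverse inclusion, fix $\beta \in \Phi$; by (ii), $\beta + m\alpha_0$ is a root for all $m$, and since every coordinate of $\alpha_0$ is a positive integer, $\beta + m\alpha_0 \geq 0$ ---and hence lies in $\Phi^+$--- once $m$ is large. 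So $\{m \in \Z : \beta + m\alpha_0 \in \Phi^+\}$ is nonempty and, being upward closed, has a least element $m_0$; set $\alpha = \beta + m_0\alpha_0 \in \Phi^+$. Then $\alpha - \alpha_0 = \beta + (m_0-1)\alpha_0$ is a root by (ii) but not in $\Phi^+$ by minimality of $m_0$, so it lies in $\Phi^-$, giving $\alpha - \alpha_0 \leq 0$, i.e.\ $0 \leq \alpha \leq \alpha_0$. Hence $\alpha \in \Psi$ and $\beta = \alpha + (-m_0)\alpha_0$, as desired.

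Fact (i) is a direct computation. Writing $\alpha_0 = \sum_i a_i e_i$, the given action yields $\sigma_i(\alpha_0) = \alpha_0 + \bigl(\sum_{j \sim i} a_j - 2a_i\bigr)e_i$. The coordinates $a_i$ in the table are the Coxeter marks, which satisfy $\sum_{j\sim i} a_j = 2a_i$ for each $i$ ---the defining property of a null vector of the affine Cartan matrix, and also verifiable type by type from the table--- so $\sigma_i(\alpha_0) = \alpha_0$ for all $i$, and therefore $W$ fixes $\alpha_0$.

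Fact (ii) carries the real content, and I expect it to be the main obstacle. Every root is $w(e_i)$ for some $w \in W$ and simple root $e_i$, and $w$ fixes $\alpha_0$, so $\beta + m\alpha_0 = w(e_i + m\alpha_0)$; it thus suffices to understand the $\alpha_0$-strings $\{\beta + m\alpha_0 : m \in \Z\}$ directly, which I would do using $W \cong W_0 \ltimes Q^\vee$, with $W_0$ the underlying finite Weyl group and $Q^\vee$ its coroot lattice. A translation $t_\lambda$ ($\lambda \in Q^\vee$) fixes $\alpha_0$ and acts trivially on the $n$-dimensional quotient of the root space by $\R\alpha_0$ ---which carries the finite root system--- so it sends each vector to itself plus an integer multiple of $\alpha_0$ depending linearly on the vector and vanishing on $\alpha_0$; concretely, $t_\lambda(\beta) - \beta$ equals, up to sign, $\langle \bar\beta, \lambda\rangle\alpha_0$, where $\bar\beta$ is the image of $\beta$ in that quotient, a root of the finite system. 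It then remains to check that $\langle\bar\beta, Q^\vee\rangle = \Z$: in an irreducible simply-laced system all roots are $W_0$-conjugate, so $\bar\beta = u(\alpha_j)$ for some $u \in W_0$ and simple $\alpha_j$, and since $Q^\vee$ is $W_0$-stable, $\langle\bar\beta, Q^\vee\rangle = \langle\alpha_j, Q^\vee\rangle$, which contains $\langle\alpha_j, \alpha_k^\vee\rangle = -1$ for any $k$ adjacent to $j$ (the diagram being connected). Hence, ranging $\lambda$ over $Q^\vee$, the $t_\lambda$-orbit of $\beta$ is exactly $\beta + \Z\alpha_0 \subseteq \Phi$. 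The one genuinely delicate point is this last line: simply-lacedness ---which excludes $\tilde{A}_1$--- is precisely what forces $\langle\bar\beta, Q^\vee\rangle$ to equal $\Z$ rather than $2\Z$; making the translation action on the root lattice explicit from the given generators and the semidirect structure, and the deductions of (i) and of the two inclusions, are all routine.
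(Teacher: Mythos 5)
The paper never proves this proposition: it appears in the background section on affine root systems, stated as a known fact with the reader referred to the standard references, so there is no internal proof to compare against. Your argument is correct. The reduction to the two facts (i) $W$ fixes $\alpha_0$ and (ii) $\Phi$ is stable under adding $\Z\alpha_0$ is clean and complete --- it uses only that every root is positive or negative, that $\Psi$ is finite, and that $\alpha_0$ has strictly positive coordinates --- and your verification of (i) directly from the given action and the marks in the table is exactly right (and safer than relying on the paper's phrase ``invariant subspace''). The real content is (ii), and there you are appealing to the standard structure theory of affine Weyl groups: the decomposition $W \cong W_0 \ltimes Q^\vee$ (which the paper does state), the fact that the quotient of the root space by $\R\alpha_0$ carries the finite root system, and the translation formula $t_\lambda(\beta) = \beta \mp \langle\bar\beta,\lambda\rangle\alpha_0$. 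These are precisely the ingredients of the textbook classification of real roots, so your proof is correct but not more self-contained than citing that classification; the genuinely independent contributions are the reduction itself and the observation that simple-lacedness of a connected diagram of rank at least two forces $\langle\bar\beta, Q^\vee\rangle = \Z$, so that translations alone sweep out the whole string $\beta + \Z\alpha_0$ (a point that indeed fails for $\tilde{A}_1$, where the finite reflection is also needed, though that case lies outside the paper's setting). If you wanted a fully self-contained argument in the paper's combinatorial framework, the step you defer as routine --- locating the translation subgroup inside the Coxeter presentation and computing its action on the lattice $\Lambda$, e.g.\ via $t_{\bar\beta^\vee}$ being the product of the reflections in $\bar\beta$ and $\bar\beta+\alpha_0$ --- is where the remaining work sits; it is standard but not a one-line check, so as written your proof has the same logical status as the paper's: correct modulo standard affine Weyl group theory.
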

This proposition is essential to several meromorphic continuation proofs given below. It demonstrates the importance of the imaginary root to the theory of affine root systems.

Another proposition describes the action of $W$ on $\Phi$ more concretely.
\begin{prop} \label{dist}
Let $\alpha= \sum a_i e_i \in \Phi$. Then $\sigma_j(\alpha)=\alpha \pm e_j$ if $\sum_{i\sim j} a_j$ is odd, and $\sigma_j(\alpha)=\alpha$ if $\sum_{i\sim j} a_j$ is even. The only exception is for $\alpha=\pm e_j + m \alpha_0$: in this case, $\sigma_j(\alpha)=\alpha \mp 2e_j$.
\end{prop}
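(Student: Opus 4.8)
The plan is to compute $\sigma_j(\alpha)$ directly from the linear action formula $\sigma_i(e_j) = -e_j$ if $j=i$, $e_j + e_i$ if $i \sim j$, and $e_j$ if $i \not\sim j$. Writing $\alpha = \sum_i a_i e_i$, linearity gives
$$\sigma_j(\alpha) = \sum_{i} a_i \sigma_j(e_i) = -a_j e_j + \sum_{i \sim j} a_i(e_i + e_j) + \sum_{i \not\sim j, \, i \neq j} a_i e_i = \alpha - 2a_j e_j + \Big(\sum_{i \sim j} a_i\Big) e_j.$$
So the only coordinate that changes is the $j$-th, and the new $j$-th coordinate is $-a_j + \sum_{i\sim j} a_i$, i.e. $\sigma_j(\alpha) = \alpha + \big(\sum_{i\sim j} a_i - 2a_j\big)e_j$. (Note the statement's ``$\sum_{i \sim j} a_j$'' should read ``$\sum_{i \sim j} a_i$''.) This identity is exact and holds for every $\alpha$ in the root lattice.

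Next I would use the fact, recalled in the excerpt, that $\sigma_j$ is an involution of $\Phi$ and that every root is positive or negative, to pin down the coefficient $\sum_{i\sim j} a_i - 2a_j$ when $\alpha$ is actually a root. Since $\sigma_j^2 = 1$, applying the displayed formula twice to a root forces $\sum_{i\sim j}a_i - 2a_j$ to take only the values permitted by the involution; more concretely, I would argue that for any root $\alpha$ the $j$-th coordinate of $\sigma_j(\alpha)$ differs from $a_j$ by at most what the Weyl group geometry allows. The clean way: the reflection $\sigma_j$ is the orthogonal reflection in the hyperplane perpendicular to the simple root $e_j$ with respect to the standard $W$-invariant bilinear form, so $\sigma_j(\alpha) = \alpha - \langle \alpha, e_j^\vee\rangle e_j$ and $\langle \alpha, e_j^\vee\rangle = \sum_{i\sim j} a_i - 2a_j$ is an integer. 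If $\sum_{i\sim j} a_i$ is even, this integer is even; if odd, it is odd. So the parity statement in the proposition is immediate from the displayed identity. To get that the coefficient is exactly $0$ in the even case and $\pm 1$ in the odd case, I would invoke Proposition 0.6 (the classification $\Phi = \{\alpha + m\alpha_0 : \alpha\in\Psi,\ m\in\mathbb{Z}\}$): since $\sigma_j$ fixes $\alpha_0$ and permutes roots, and the finitely many roots in $\Psi$ have coordinates bounded by those of $\alpha_0$, one checks case by case (or by the finite-root-system fact that $\langle\beta,\gamma^\vee\rangle \in \{0,\pm 1,\pm 2\}$ for simply-laced roots) that $|\langle\alpha,e_j^\vee\rangle| \leq 2$, with equality to $2$ exactly when $\alpha = \pm e_j$, hence (translating by $m\alpha_0$, which is $\sigma_j$-fixed) exactly when $\alpha = \pm e_j + m\alpha_0$. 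In that exceptional case $\sigma_j(\alpha) = \alpha \mp 2 e_j$, and in all other cases the coefficient is $0$ or $\pm 1$ according to parity.

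The main obstacle is the last refinement: ruling out coefficient values $\pm 2$ except for $\alpha = \pm e_j + m\alpha_0$, and more generally confirming that odd parity yields exactly $\pm 1$ rather than $\pm 3, \pm 5, \dots$. I expect to handle this by reducing mod $\alpha_0$ to the finite set $\Psi$ (using that $\sigma_j$ commutes with translation by $\alpha_0$) and then appealing to the standard bound $|\langle\alpha,\beta^\vee\rangle| \leq 2$ in a simply-laced root system, with equality only for $\alpha = \pm\beta$; the simply-laced hypothesis is exactly what makes this bound sharp and the case analysis short. The parity half of the statement, by contrast, requires nothing beyond the one-line linear computation above.
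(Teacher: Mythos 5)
Your argument is correct, and there is nothing in the paper to compare it against: Proposition \ref{dist} appears in the background section on affine root systems and is stated without proof, as a standard fact deferred to the references. Your route is the standard one --- the one-line linear computation $\sigma_j(\alpha)=\alpha+\bigl(\sum_{i\sim j}a_i-2a_j\bigr)e_j$ gives the parity statement (and yes, the ``$\sum_{i\sim j}a_j$'' in the statement is a typo for $\sum_{i\sim j}a_i$), and the refinement to values $0,\pm 1,\pm 2$ with $\pm 2$ only for $\alpha=\pm e_j+m\alpha_0$ is exactly the reduction modulo the null root $\alpha_0$ (which pairs to zero against every $e_j^\vee$, so $\sigma_j$ commutes with translation by $\alpha_0$) combined with the simply-laced bound $|\langle\beta,\gamma^\vee\rangle|\leq 2$ with equality only for $\beta=\pm\gamma$. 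The only blemish is a harmless sign slip: with $\sigma_j(\alpha)=\alpha-\langle\alpha,e_j^\vee\rangle e_j$ one has $\langle\alpha,e_j^\vee\rangle=2a_j-\sum_{i\sim j}a_i$, not its negative; since your argument uses only the parity and absolute value of this integer, nothing is affected.
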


Let $\x=(x_1, \ldots x_{n+1})$ be an $n+1$-tuple of complex numbers. In order to define functional equations below, we will fix an action of $W$ on $\x$, given by 
\begin{equation}
(\sigma_i(\x))_j=\left\lbrace \begin{array}{cc} q^{-1}x_i^{-1} & j=i \\ q^{1/2}x_ix_j & i \sim j \\ x_j & i\not\sim j \end{array} \right. .
\end{equation}
The action of $w\in W$ takes the monomial $\x^{\alpha}:=\prod x_i^{a_i}$ to $q^{(\text{ht}(w(\alpha))-\text{ht}(\alpha))/2}\x^{w(\alpha)}$. It is a conjugation by $q^{1/2}$ of the standard action of $W$ on the group ring of the root lattice $\C[\Lambda]\cong\C[x_1, \ldots x_{n+1}]$.

\chapter{Axioms and Consequences}\label{chap:2}
\section{The Four Axioms}

We will begin by stating the problem in the most general way. Let $\Gamma$ be a graph with vertices labeled $1$ to $n+1$, and write $i \sim j$ for adjacent vertices. Let $\mathbb{F}_q$ be a finite field, with $q\equiv 1 \mod 4$. We would like to define a multiple Dirichlet series, roughly:
\begin{equation} 
\sum_{f_1, \ldots f_{n+1} \in \mathbb{F}_q[t] \text{ monic}} \prod_{i \sim j} \left(\frac{f_i}{f_j}\right) x_1^{\deg f_1} \cdots x_{n+1}^{\deg f_{n+1}}
\end{equation}
where $\left(\frac{\text{  }}{\text{  }}\right)$ denotes the quadratic residue symbol. This series should have a group of functional equations isomorphic to $W$, the Weyl or Coxeter group associated to the simply-laced Dynkin diagram $\Gamma$. The situation where $\Gamma$ corresponds to a finite irreducible root system is well-understood; we will restrict our attention to the case of affine Kac-Moody root systems below. In any case, the functional equations derive from those of quadratic Dirichlet L-series for the function field $\mathbb{F}_q(t)$. We expect to have a functional equation in $x_i \mapsto q^{-1}x_i^{-1}$ based on the L-function of a quadratic character with conductor $\prod\limits_{j \sim i} f_j$. However, if the conductor is not squarefree, the functional equation is different from what we expect; we must remedy this situation by replacing the sums of residue symbols above with weighted sums. To this end, we define a weighting function $H: \F_q[t]^{n+1} \to \C$, and let 
\begin{align}
Z(x_1, \ldots x_{n+1}) &= \sum_{f_1, \ldots f_{n+1} \in \mathbb{F}_q[t] \text{ monic}} H(f_1, \ldots f_{n+1}) x_1^{\deg f_1} \cdots x_{n+1}^{\deg f_{n+1}} \\ &= \sum_{a_1, \ldots a_{n+1} \geq 0} c_{a_1, \ldots a_{n+1}}(q) x_1^{a_1} \cdots x_{n+1}^{a_{n+1}}.
\end{align}
Here, intuitively, $H(f_1, \ldots f_{n+1})$ is the weighted version of $\prod\limits_{i \sim j} \left(\frac{f_i}{f_j}\right)$. By definition, we have
\begin{equation}
c_{a_1, \ldots a_{n+1}}(q) = \sum_{\substack{f_1, \ldots f_{n+1} \in \mathbb{F}_q[t] \text{ monic} \\ \deg(f_i)=a_i}} H(f_1, \ldots f_{n+1}).
\end{equation}
When $q$ is fixed, we may simply write $c_{a_1, \ldots a_{n+1}}$. 

In the theory for finite Weyl groups, the $H(f_1, \ldots f_{n+1})$ can be chosen in an ad-hoc way, with the goal of making the functional equations true. We will see after Proposition (\ref{diagonalcoeffs}) below that this leads to a unique construction. For Kac-Moody Weyl groups, however, many possible choices for $H(f_1, \ldots f_{n+1})$ yield the same functional equations. Here we give an axiomatic definition of $H(f_1, \ldots f_{n+1})$. The four axioms are due to Diaconu and Pasol \cite{DP}, who study certain multiple Dirichlet series associated to moments of quadratic L-functions. Axioms (\ref{localglobal}) and (\ref{dominance}) are consequences of Poincar\'{e} duality on parameter spaces of hyperelliptic curves--the weights $H(f_1, \ldots f_{n+1})$ come from compactifying these spaces. Although the most general geometric context for these axioms is not yet completely understood, they seem to give the ``right'' definition for $Z(x_1, \ldots x_{n+1})$ in the classical and Kac-Moody cases: right in the sense of being optimal for computing moments of L-functions and character sums, and, we conjecture, appearing as Whittaker coefficients in metaplectic Eisenstein series.

The first axiom is familiar from the theory for classical Weyl groups:

\begin{axiom}[Twisted Multiplicativity] \label{twistedmult}
For $f_1\cdots f_{n+1}, g_1\cdots g_{n+1}\in \mathbb{F}_q[t]$ relatively prime, we have 
\begin{equation}
H(f_1 g_1, \ldots f_{n+1} g_{n+1})=H(f_1, \ldots f_{n+1})H(g_1, \ldots g_{n+1}) \prod_{i \sim j} \left(\frac{f_i}{g_j}\right).
\end{equation}
\end{axiom}

Thus it suffices to describe $H(p^{a_1}, \ldots p^{a_{n+1}})$ for $p \in \mathbb{F}_q[t]$ prime. The next two axioms give a characterization. They describe how the weights $H$ and coefficients $c$ vary as the underlying finite field $\F_q$ varies. They appear as axioms for the first time in the work of Diaconu and Pasol \cite{DP}, but can be proven as propositions in the theory for finite Weyl groups.

\begin{axiom}[Local to Global Principle]\label{localglobal}
The coefficients $c_{a_1, \ldots a_{n+1}}(q)$ and $H(p^{a_1}, \ldots p^{a_{n+1}})$ are polynomials in $q$ and $q^{\deg p}$ respectively, of degree $a_1+\cdots + a_{n+1}$. Furthermore, 
\begin{equation}
q^{(a_1+\cdots + a_{n+1})\deg p} c_{a_1, \ldots a_{n+1}}(q^{-\deg p})=H(p^{a_1}, \ldots p^{a_{n+1}}).
\end{equation}
\end{axiom}

Note that it only makes sense to evaluate $c_{a_1, \ldots a_{n+1}}$ at negative powers of $q$ after asserting that $c_{a_1, \ldots a_{n+1}}(q)$ is a polynomial.

\begin{axiom}[Dominance]\label{dominance}
The polynomial $H(p^{a_1}, \ldots p^{a_{n+1}})$ has degree less than $\frac{a_1+\cdots + a_{n+1}-1}{2}$. Equivalently, $c_{a_1, \ldots a_{n+1}}(q)$ has nonzero terms only in degrees greater than $\frac{a_1+\cdots + a_{n+1}+1}{2}$. The only exceptions are for $H(1,\ldots 1)$, $H(1, \ldots 1, p, 1, \ldots 1)$, $c_{0,\ldots 0}(q)$, and $c_{0, \ldots 0, 1, 0, \ldots 0}(q)$.
\end{axiom}

In concrete terms, the Dominance axiom states that the contribution of correction terms $H(p^{a_1}, \ldots p^{a_{n+1}})$ is as small as possible; the weighting affects the computation of moments of L-functions and character sums as little as possible. The final axiom is essentially a normalization assumption, and is of lesser importance:

\begin{axiom}[Initial Conditions]\label{initialconditions}
We have $H(1, \ldots 1, f_i, 1, \dots 1)=1$ for all $f_i \in \mathbb{F}_q[t]$, or equivalently, $c_{0, \ldots 0, a_i, 0, \ldots 0}(q)=q^{a_i}$.
\end{axiom}

Let $\x=(x_1, \ldots x_{n+1})$. The main theorem of this paper is as follows:

\begin{thm}\label{main}
Suppose that $\Gamma$ is the Dynkin diagram of a simply-laced affine Kac-Moody root system:

\includegraphics[scale=.30]{affine}

\noindent excepting $\tilde{A}_n$ for $n$ even. There exists a unique series $Z(\x)$ satisfying the four axioms. This series has meromorphic continuation to $|\x^{\alpha_0}|<q^{-\text{ht}(\alpha_0)}$, where $\alpha_0$ is the imaginary root of the root system $\Gamma$, and group of functional equations isomorphic to $W$. In the case of $\tilde{A}_n$ for $n$ odd, it has meromorphic continuation to $|\x^{\alpha_0}|<q^{-\text{ht}(\alpha_0)/2}$, which is the largest possible domain.
\end{thm}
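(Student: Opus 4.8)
The plan is to derive the group $W$ of functional equations from the axioms, use them to reduce every power-series coefficient to a single one-parameter family of ``diagonal'' coefficients, pin that family down using Axioms \ref{localglobal} and \ref{dominance}, and then read off the meromorphic continuation from the combinatorics of the affine root system.

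\emph{Functional equations and reduction to the diagonal.} First I would derive the $W$ of functional equations directly from the axioms. Fixing a vertex $i$ and expanding $Z(\x)$ as a Dirichlet series in $x_i$ with the remaining $f_j$ held fixed, Axiom \ref{twistedmult} together with quadratic reciprocity (valid since $q\equiv 1\mod 4$) rewrites the coprime, squarefree part of each coefficient as $L(x_i,\chi_g)$ with $g=\prod_{j\sim i}f_j$, while the non-squarefree and non-coprime terms supply the built-in correction weights. Feeding in the function-field $L$-function functional equations recalled in the preliminaries, one obtains, after a standard normalization by zeta factors and powers of $q^{1/2}x_i$, a functional equation for a rescaled series $\tilde Z(\x)$ under the substitution $\sigma_i(\x)$ of the excerpt. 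Involutivity and the braid relations follow from the matching compatibilities of the $L$-function functional equations, so these symmetries generate a group isomorphic to $W$. Expanding one such functional equation as an identity of power series produces a linear recursion among the $c_{a_1,\ldots,a_{n+1}}(q)$; by Proposition \ref{dist}, applying $\sigma_i$ changes an exponent vector $\alpha$ by $\pm e_i$, fixes it, or changes it by $\mp 2e_i$ in the exceptional case, so iterating the recursion strictly decreases $\text{ht}(\alpha)$ until $\alpha$ lands on the $W$-invariant line $\Z\alpha_0$. Hence, using also Axioms \ref{twistedmult} and \ref{initialconditions}, every coefficient is determined by the diagonal coefficients $c_{m\alpha_0}(q)$, $m\ge 0$.

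\emph{Pinning down the diagonal --- the main obstacle.} By twisted multiplicativity it suffices to describe the prime-power weights $H(p^m,\ldots,p^m)$. Axiom \ref{localglobal} makes $c_{m\alpha_0}(q)$ a polynomial of degree $m\,\text{ht}(\alpha_0)$ obeying the self-duality $H(p^m,\ldots,p^m)=q^{m\,\text{ht}(\alpha_0)\deg p}\,c_{m\alpha_0}(q^{-\deg p})$, and Axiom \ref{dominance} forces roughly the top half of the coefficients of that polynomial to vanish. The device that makes these constraints effective is to pass to a residue: setting $x_i=q^{-1}$ along a suitable set of vertices removes the twisting in $H$, so the corresponding residue of $Z$ is a genuine Euler product with multiplicative coefficients and can be summed in closed form as an infinite product of zeta-type factors $(1-q^{a}\x^{\beta})^{-1}$, exactly as in \eqref{A3residue}. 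On this residue, local-to-global becomes a symmetry, dominance bounds the degrees of its coefficients, and the inherited $W$-functional equations leave no remaining freedom, so the residue --- and hence, by a contour argument that reconstructs $Z$ from its residues, the diagonal coefficients and all of $Z(\x)$ --- is unique. In type $\tilde A_n$ with $n$ odd the residue is computed outright; in the other types it is computed up to the diagonal factor of Conjecture \ref{R1}, which still suffices to determine $Z$ on the smaller domain. I expect this uniqueness step to be the crux: it is the only place where all four axioms are simultaneously forced into play, and it rests on the delicate fact that self-duality, the degree bound, and the $W$-recursion together over-determine the diagonal.

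\emph{Existence and meromorphic continuation.} For existence I would reverse the logic: take the diagonal coefficients to be given by the explicit formula extracted from the residue computation, extend to all coefficients by the recursion of the previous step (its consistency guaranteed by the braid relations among the functional-equation operators), and verify that the weights $H$ so produced satisfy Axioms \ref{twistedmult}--\ref{initialconditions} --- twisted multiplicativity and the initial conditions by construction, local-to-global and dominance because the recursion preserves the polynomial degree and the self-duality present in the zeta-type factors of the residue. The meromorphic continuation then follows from the classification of real roots as $\{\alpha+m\alpha_0:\alpha\in\Psi,\ m\in\Z\}$ with $\Psi$ finite: the functional equations propagate $Z$ out of its region of absolute convergence across the polar hyperplanes attached to the positive real roots, and only finitely many of these meet a given compact subset of $\{|\x^{\alpha_0}|<q^{-\text{ht}(\alpha_0)}\}$ --- respectively of $\{|\x^{\alpha_0}|<q^{-\text{ht}(\alpha_0)/2}\}$ in type $\tilde A_n$ with $n$ odd, where the explicit residue controls the behaviour near the boundary --- so $Z$ continues meromorphically on that region. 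Since the poles accumulate precisely along $|\x^{\alpha_0}|=q^{-\text{ht}(\alpha_0)/2}$, the boundary of the Tits cone, that domain is the largest possible, completing the proof.
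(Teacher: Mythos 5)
Your outline of the uniqueness and existence part tracks the paper's actual route fairly closely: the axioms yield the coefficient recursions (Proposition \ref{axiomsimplyfes}), the recursions reduce every coefficient to the diagonal family $c_{m\alpha_0}$ (Proposition \ref{diagonalcoeffs}), and the residue along $\x_T=(q^{-1},\ldots,q^{-1})$ is an honest Euler product on which Axiom \ref{localglobal} becomes the pairing symmetry of Property \ref{symmetry}; once the off-diagonal factor $R_0$ is pinned down by the residue's own functional equations (Proposition \ref{R0}, a long type-by-type computation you do not address but reasonably defer), dominance plus that symmetry force a unique splitting that determines $R_1$ and $Z_{\text{diag}}$ simultaneously. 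Two caveats, though. The reconstruction of $Z$ from its residue is not ``a contour argument'' --- a single residue does not determine a multivariable function; it works because any two solutions of the functional equations differ by a power series in the $W$-invariant monomial $\x^{\alpha_0}$ (Corollary \ref{onevariable}), so the ratio of $R_{\text{diag}}$ to $Z_{\text{diag}}$ is a fixed series $G(x)$ depending only on the functional equations, equation (\ref{residuetodiagonal}). And the consistency of your reverse-engineered recursion is not automatic from the braid relations; the paper secures it by exhibiting an actual solution of the functional equations, which brings me to the real gap.

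The genuine gap is the meromorphic continuation. You propose to propagate $Z$ out of its polydisc of convergence by the functional equations, crossing finitely many polar hyperplanes at a time. This cannot work as stated: (i) the functional equations are a priori identities of formal power series (coefficient recursions), and to use them analytically you already need $Z$ meromorphic near both $\x$ and $\sigma_i(\x)$; (ii) the $W$-translates of $\{|x_i|<q^{-1}\}$ do not cover $\{|\x^{\alpha_0}|<q^{-\text{ht}(\alpha_0)}\}$, and any convexity/tube-domain patch must also control the infinite product of polar divisors $D(\x)$, whose convergence is itself a theorem; (iii) most decisively, $\x^{\alpha_0}$ is fixed by every $\sigma_i$, so the functional equations are blind to the one-variable factor $F(\x^{\alpha_0})$ by which solutions differ --- no amount of propagation rules out an essential singularity in that variable. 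The paper's mechanism is different: it constructs the Chinta--Gunnells averaged series $Z_{\text{avg}}$ (Proposition \ref{averaging}), with explicit denominator and convergence estimates valid on the full Tits cone, and then writes $Z=F(\x^{\alpha_0})Z_{\text{avg}}$, where $F$ is meromorphic for $|\x^{\alpha_0}|<q^{-\text{ht}(\alpha_0)}$ because $Z$ converges on the polydisc (Corollary \ref{cont1}); the optimal domain in type $\tilde{A}_n$, $n$ odd, additionally requires the full residue formula, whose diagonal part is established by the substantial partition-counting argument of Chapter \ref{chap:6} --- a computation your sketch asserts is done ``outright'' but gives no method for --- and even then the continuation is deduced by comparing $R$ with the residue of $Z_{\text{avg}}$, not by propagation. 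Without constructing $Z_{\text{avg}}$ (or some substitute series known to be meromorphic on the large domain) and without an argument for the $\tilde{A}$ diagonal residue, the continuation claims in your proposal do not close.
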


We expect meromorphic continuation to $|\x^{\alpha_0}|<q^{-\text{ht}(\alpha_0)/2}$ to hold in all types, and give a specific conjecture which implies this. We also expect the same theorem to hold for $\tilde{A}_n$ with $n$ even. 

We will also give some evidence that the series constructed here are the correct ones for the Eisenstein conjecture: namely, that they have poles corresponding to imaginary roots in the root system.

In the case of finite Weyl groups, an analogous theorem holds. The axioms produce the same series as other known methods of construction; this series has meromorphic continuation to all of $\mathbb{C}^{n+1}$, and is in fact a rational function. The Eisenstein conjecture is known in many cases \cite{BBF1, FZ}.

In the case of non-affine Kac-Moody Weyl groups, such as those considered by Diaconu and Pasol, the axioms must be relaxed in a particular way: the coefficients $c_{a_1, \ldots a_{n+1}}(q)$ and $H(p^{a_1}, \ldots p^{a_{n+1}})$ may not be polynomials, though they consist of terms which have a well-defined notion of degree and can be evaluated at $q^{-1}$, for example, the Ramanujan tau function with degree $11/2$, or Fourier coefficients of cusp forms more generally. Diaconu and Pasol prove the uniqueness, though not necessarily the existence, of series satisfying the relaxed axioms for an infinite family of Kac-Moody groups.

\section{The Axioms Imply the Functional Equations}

For now, we set aside the more subtle questions of existence, uniqueness of $Z(\x)$, and study its basic properties under the assumption that it exists. Axioms (\ref{localglobal}) and (\ref{dominance}) imply that $Z(\x)$ converges absolutely in the domain $\lbrace (x_1, \ldots x_{n+1}) \in \C^{n+1} : \text{all } |x_i|<q^{-1} \rbrace$, and hence defines a holomorphic function in this domain. We will meromorphically continue to a larger domain below. For now, we verify that $Z(\x)$ has a group $W$ of functional equations. The crucial fact is the following:

\begin{prop} \label{axiomsimplyfes}
Suppose that we have a choice of weights $H(p^{a_1},\ldots p^{a_{n+1}})$ and coefficients $c_{a_1,\ldots a_{n+1}}(q)$ satisfying the axioms. Fix $a_1, \ldots a_{i-1}, a_{i+1}, \ldots a_{n+1}$, and let
\begin{align}
\lambda(x_i) &= \sum_{a_i=0}^{\infty} c_{a_1, \ldots a_i, \ldots a_{n+1}}(q) x_i^{a_i} \\
\lambda_p(x_i^{\deg p}) &= \sum_{a_i=0}^{\infty} H(p^{a_1}, \ldots p^{a_i}, \ldots p^{a_{n+1}}) x_i^{a_i \deg p}.
\end{align}
If $\sum\limits_{j \sim i} a_j$ is odd, then these series are polynomials of degree $\sum\limits_{j \sim i} a_j-1$, satisfying:
\begin{align}
&(q^{1/2} x_i)^{\sum\limits_{j \sim i} a_j-1} \lambda(q^{-1}x_i^{-1}) = \lambda(x_i) \label{oddfactor} \\ 
&(q^{1/2} x_i)^{(\sum\limits_{j \sim i} a_j-1)\deg p} \lambda_p(q^{-1}x_i^{-1}) = \lambda_p(x_i). \label{poddfactor}
\end{align}
If $\sum\limits_{j \sim i} a_j$ is even, then these series are rational functions, with denominators $1-qx_i$, $1-x_i^{\deg p}$ respectively and numerators of degree $\sum\limits_{j \sim i} a_j$, satisfying:
\begin{align}
&(q^{1/2} x_i)^{\sum\limits_{j \sim i} a_j}(1-x_i^{-1})\lambda(q^{-1}x_i^{-1}) = (1-qx_i)\lambda(x_i) \label{evenfactor} \\ 
&(q^{1/2} x_i)^{(\sum\limits_{j \sim i} a_j)\deg p}(1-q^{-\deg p}x_i^{-\deg p})\lambda_p(q^{-1}x_i^{-1}) = (1-x_i^{\deg p})\lambda_p(x_i). \label{pevenfactor}
\end{align}
\end{prop}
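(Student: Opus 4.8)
The plan is to reduce the statement about $\lambda(x_i)$ and $\lambda_p(x_i^{\deg p})$ to the functional equations of the quadratic Dirichlet L-functions recalled in the background section, using twisted multiplicativity to pass between the local and global versions. First I would treat the local series $\lambda_p$. By Axiom \ref{twistedmult}, a weight $H(p^{a_1},\ldots,p^{a_i},\ldots,p^{a_{n+1}})$ with the exponents $a_j$ for $j\neq i$ fixed is, up to the normalization $H(1,\ldots,1,p^{a_i},1,\ldots,1)=1$ from Axiom \ref{initialconditions}, essentially a product over the neighbors $j\sim i$ of residue symbols $\bigl(\frac{p^{a_i}}{p^{a_j}}\bigr)$ together with a factor that does not depend on $a_i$; the upshot is that $\lambda_p(x_i^{\deg p})$ should be (a constant multiple of) the $p$-part of $L(x_i,\chi_f)$ for a conductor $f$ whose degree is $\bigl(\sum_{j\sim i}a_j\bigr)\deg p$, at least when that conductor is squarefree at $p$. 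Here the parity of $\sum_{j\sim i}a_j$ governs the parity of the degree of the conductor, which is exactly the dichotomy in the statement.

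The cleanest route, though, is probably to avoid identifying $\lambda_p$ with a genuine L-function and instead argue directly. The key structural fact is that the weights $H(p^{a_1},\ldots,p^{a_{n+1}})$ are, by Axioms \ref{localglobal} and \ref{dominance}, polynomials in $|p|=q^{\deg p}$ of controlled degree; combined with twisted multiplicativity this forces $\lambda_p$ to be a polynomial of degree $\sum_{j\sim i}a_j-1$ in the odd case and a rational function with denominator $1-x_i^{\deg p}$ and numerator of degree $\sum_{j\sim i}a_j$ in the even case. To pin down the functional equation I would compare the recursion that the L-function functional equation imposes on its coefficients with the recursion forced on the $H(p^{\cdots})$ by the axioms. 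Concretely: assume the functional equation holds, read off what it says coefficient by coefficient, and check that the resulting linear relations among $H(p^{a_1},\ldots,p^{a_{n+1}})$ (for varying $a_i$) are automatically satisfied by any weights obeying the four axioms. For the global series $\lambda(x_i)$, I would then invoke the Local to Global axiom \ref{localglobal}: since $c_{a_1,\ldots,a_{n+1}}(q)$ is a polynomial in $q$ and equals $|p|^{-\sum a_k}H(p^{a_1},\ldots,p^{a_{n+1}})$ evaluated at $|p|=q^{-1}$ (more precisely the substitution $q\mapsto q^{-\deg p}$ in the displayed form of Axiom \ref{localglobal}), the identities \eqref{poddfactor} and \eqref{pevenfactor} for $\lambda_p$ specialize to \eqref{oddfactor} and \eqref{evenfactor} for $\lambda$ after the change of variable $x_i^{\deg p}\mapsto x_i$ and $|p|\mapsto q$. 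One must check that the exponents and the $(1-qx_i)$ versus $(1-x_i^{\deg p})$ factors transform correctly under this substitution, which they do since $(q^{1/2}x_i)^{(\cdots)\deg p}$ becomes $(q^{1/2}x_i)^{(\cdots)}$ and $1-|p|^{-1}x_i^{-\deg p}$ becomes $1-q^{-1}x_i^{-1}$, etc.

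The main obstacle is the non-squarefree case: the whole point of introducing the weights $H$ is that when the conductor $\prod_{j\sim i}p^{a_j}$ is divisible by $p^2$, the naive L-function has the wrong degree and the wrong functional equation, and $\lambda_p$ must be a genuinely weighted sum rather than a local L-factor. So the heart of the argument is showing that the four axioms — and in particular the interplay of Local to Global (\ref{localglobal}) with Dominance (\ref{dominance}) — force $\lambda_p$ to have precisely the functional equation \eqref{poddfactor} or \eqref{pevenfactor} even in this degenerate regime. I expect this to come down to the following: the self-consistency of the axioms means that the polynomial $\lambda_p(x_i^{\deg p})$ of the asserted degree is determined by enough of its low-order coefficients (via Dominance, which kills the top-degree terms) together with the duality in Local to Global (which is itself a functional-equation-type statement relating $H$ at $|p|$ and at $|p|^{-1}$); unwinding these gives exactly the claimed symmetry of $\lambda_p$ under $x_i\mapsto q^{-1}x_i^{-1}$. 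Once $\lambda_p$ is handled uniformly, the passage to $\lambda$ is a formal consequence of Axiom \ref{localglobal} as described above, and the parity split into \eqref{oddfactor}/\eqref{evenfactor} versus \eqref{poddfactor}/\eqref{pevenfactor} is bookkeeping.
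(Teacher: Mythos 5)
Your proposal assembles the right raw ingredients (the known functional equations of $L(x_i,\chi_g)$, twisted multiplicativity, the equivalence of the local and global statements via Axiom \ref{localglobal}, and Dominance to control the degenerate terms), and the observation that \eqref{oddfactor}/\eqref{evenfactor} and \eqref{poddfactor}/\eqref{pevenfactor} are interchangeable under Axiom \ref{localglobal} does match the opening remark of the paper's proof. But the core of your argument is a placeholder rather than a proof, and it contains a circularity. You assert that Axioms \ref{localglobal} and \ref{dominance}, combined with twisted multiplicativity, ``force $\lambda_p$ to be a polynomial of degree $\sum_{j\sim i}a_j-1$'' and that the symmetry then falls out of ``unwinding'' the local--global duality. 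Dominance, however, bounds the degree of $H(p^{a_1},\ldots,p^{a_{n+1}})$ as a polynomial in $q^{\deg p}$; it says nothing about vanishing in the $x_i$-direction, i.e.\ about $H(p^{a_1},\ldots,p^{a_i},\ldots,p^{a_{n+1}})=0$ for $a_i\geq\sum_{j\sim i}a_j$. That polynomiality in $x_i$ of the stated degree is part of the conclusion being proved, not an input, and no purely local manipulation of the axioms can produce it: the weights $H(p^{a_1},\ldots,p^{a_{n+1}})$ at a single prime are exactly the undetermined data (your first paragraph's claim that they reduce to residue symbols $\bigl(\frac{p^{a_i}}{p^{a_j}}\bigr)$ is false, since twisted multiplicativity only constrains coprime parts and those symbols vanish anyway).

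What is missing is the paper's actual mechanism, which is genuinely global and inductive. One inducts on $\sum_{j\neq i}a_j$, writing $\lambda(x_i)$ as a sum over all tuples $(f_j)_{j\neq i}$ of the fixed degrees of the single-variable series $L_{f_1,\ldots,f_{i-1},f_{i+1},\ldots,f_{n+1}}(x_i)$. Twisted multiplicativity identifies each such series with a genuine quadratic L-function $L(x_i,\chi_g)$, $g$ the squarefree part of $\prod_{j\sim i}f_j$, modified at the finitely many primes dividing $\prod_{j\neq i}f_j$; the modified Euler factors are exactly lower cases of the proposition, handled by the inductive hypothesis, and the known functional equation of $L(x_i,\chi_g)$ (plus Axiom \ref{initialconditions} for the base case) then gives the functional equation for each summand. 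The one term to which induction does not apply is the diagonal one, $p$ linear with every $f_j=p^{a_j}$, so what one actually proves is a functional equation for $\lambda(x_i)-q\lambda_p(x_i)$ (or its even-case analogue with the factor $\frac{1-x_i}{1-qx_i}$). Only at this point does Dominance enter, and in a different way than you envisage: comparing coefficients in $q$ of degree greater than $\frac{a_1+\cdots+a_{n+1}+1}{2}$, the $H$-contributions drop out because their $q$-degree is too small, yielding the recursions \eqref{oddrecursion}--\eqref{evenrecursion} for the $c$'s alone, and hence \eqref{oddfactor}/\eqref{evenfactor}; the local statements then follow by Axiom \ref{localglobal}. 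Without this decomposition, the isolation of the degenerate diagonal term, and the degree-separation use of Dominance, the ``heart of the argument'' you defer to remains unestablished.
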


\begin{proof}
First, note that the statements for $\lambda$ and $\lambda_p$ are equivalent, by Axiom (\ref{localglobal}). Before proving these functional equations, we translate them into linear relations on the coefficients $c_{a_1,\ldots a_{n+1}}(q)$. If $\sum\limits_{j \sim i} a_j$ is odd, then (\ref{oddfactor}) implies:
\begin{equation}
c_{a_1,\ldots a_i, \ldots a_{n+1}}(q) = q^{a_i-(\sum\limits_{j \sim i} a_j-1)/2} c_{a_1,\ldots \sum\limits_{j \sim i} a_j-1-a_i, \ldots a_{n+1}}(q) \label{oddrecursion}
\end{equation}
and if $\sum\limits_{j \sim i} a_j$ is even, then (\ref{evenfactor}) implies:
\begin{align}
&c_{a_1,\ldots a_i, \ldots a_{n+1}}(q)-q c_{a_1,\ldots a_i-1, \ldots a_{n+1}}(q) \nonumber \\
& \ \ = q^{a_i-(\sum\limits_{j \sim i} a_j)/2} (c_{a_1,\ldots \sum\limits_{j \sim i} a_j-a_i, \ldots a_{n+1}}(q)-q c_{a_1,\ldots \sum\limits_{j \sim i} a_j-a_i-1, \ldots a_{n+1}}(q)). \label{evenrecursion}
\end{align}
In the following chapters, we will very often use the functional equations this way, as linear recurrences on the coefficients. Of course, it is also possible to write linear recurrences on the $H(p^{a_1},\ldots p^{a_{n+1}})$.

We proceed by induction on $\sum\limits_{j \neq i} a_j$. When $\sum\limits_{j \neq i} a_j=0$, the proposition follows from Axiom (\ref{initialconditions}). For the inductive step, fix $f_1, \ldots f_{i-1}, f_{i+1}, \ldots f_{n+1}$ of degrees  $a_1, \ldots a_{i-1}, a_{i+1}, \ldots a_{n+1}$, and consider 
\begin{align}
&L_{f_1, \ldots f_{i-1}, f_{i+1}, \ldots f_{n+1}}(x_i)=\sum_{f_i  \in \mathbb{F}_q[t] \text{ monic}} H(f_1, \ldots f_i, \ldots f_{n+1}) x_i^{\deg f_i} \nonumber \\
&=\prod_{\substack{j\sim k, j, k \neq i \\ p_j\neq p_k \\ p_j|f_j, \ p_k|f_k}} \left(\frac{p_j^{v_{p_j}(f_j)}}{p_k^{v_{p_k}(f_k)}}\right) \prod_p \left(\sum_{a_i=0}^{\infty} H(p^{v_p(f_1)},\ldots p^{a_i}, \ldots p^{v_p(f_{n+1})}) \prod_{\substack{j \sim i \\ p_j \neq p \\ p_j|f_j}} \left(\frac{p^{a_i}}{p_j^{v_{p_j}(f_j)}}\right) x_i^{a_i \deg p}\right)
\end{align}
where the products are over $p, p_j, p_k \in \mathbb{F}_q[t]$ prime, and $v_p(f)$ denotes the multiplicity of the prime factor $p$ in $f$. This Euler product formula follows from Axiom (\ref{twistedmult}). Furthermore, if we set $g$ as the squarefree part of $\prod\limits_{j \sim i} f_j$, then the Euler factors differ from those of the L-function
\begin{equation}
L(x_i, \chi_g)=\prod_p \left(1-\left(\frac{p}{g}\right) x_i^{\deg p}\right)^{-1}
\end{equation}
at finitely many places $p$, namely those dividing $\prod\limits_{j \neq i} f_j$. Let us consider how these modified factors contribute to the product. 

First, suppose $\sum\limits_{j \sim i} v_p(f_j)$ is odd. Then the modified Euler factor at $p$ is $\lambda_p(\pm x_i^{\deg p})$ (where the sign is determined by the residue of $p$ modulo the part of $g$ coprime to $p$) instead of $1$. If the inductive hypothesis applies to this factor, then it satisfies equation (\ref{poddfactor}) above. On the other hand, if $\sum\limits_{j \sim i} v_p(f_j)$ is even, then the modified Euler factor at $p$ is $\lambda_p(\pm x_i^{\deg p})$ instead of $(1 \mp x_i^{\deg p})^{-1}$. Hence the L-series is multiplied by $(1 \mp x_i^{\deg p})\lambda_p(\pm x_i^{\deg p})$, which satisfies equation (\ref{pevenfactor}) above if the inductive hypothesis applies.

If $\deg g$ is odd, then $L(x_i, \chi_g)$ is a polynomial of degree $\deg g -1$ satisfying
\begin{equation}
(q^{1/2} x_i)^{\deg g-1} L(q^{-1}x_i^{-1}, \chi_g) = L(x_i, \chi_g)
\end{equation} 
and if $\deg g$ is even, then $L(x_i, \chi_g)$ is a polynomial of degree $\deg g-1$ (or if $g=1$, it is the zeta function $(1-qx_i)^{-1}$), satisfying
\begin{equation}
(q^{1/2} x_i)^{\deg g}(1-x_i^{-1})L(q^{-1}x_i^{-1}, \chi_g) = (1-qx_i)L(x_i, \chi_g).
\end{equation}
The modified factors add $\sum\limits_{j \sim i} v_p(f_j)-1$ if $\sum\limits_{j \sim i} v_p(f_j)$ is odd, or $\sum\limits_{j \sim i} v_p(f_j)$ if $\sum\limits_{j \sim i} v_p(f_j)$ is even, to the exponent of $q^{1/2}x_i$. We conclude that, if $\sum\limits_{j \sim i} a_j$ is odd, then $L_{f_1, \ldots f_{i-1}, f_{i+1}, \ldots f_{n+1}}(x_i)$ is a polynomial of degree $\sum\limits_{j \sim i} a_j-1$ satisfying the functional equation (\ref{oddfactor}). If $\sum\limits_{j \sim i} a_j$ is even, then $L_{f_1, \ldots f_{i-1}, f_{i+1}, \ldots f_{n+1}}(x_i)$ is a polynomial of degree $\sum\limits_{j \sim i} a_j-1$ (or a rational function with denominator $1-qx_i$ and numerator of degree $\sum\limits_{j \sim i} a_j$) satisfying the functional equation (\ref{evenfactor}).

We may follow this reasoning as long as the inductive hypothesis applies to all the modified Euler factors. That is, we must have $\sum\limits_{j \neq i} v_p(f_j) < \sum\limits_{j \neq i} a_j$. This occurs in all cases except when $p$ is linear and each $f_j=p^{a_j}$. We write
\begin{align}
\lambda(x_i) &= \sum_{\substack{f_1, \ldots f_{i-1}, f_{i+1}, \ldots f_{n+1} \\ \deg f_j=a_j}} L_{f_1, \ldots f_{i-1}, f_{i+1}, \ldots f_{n+1}}(x_i) \nonumber \\
&= \sum_{p \in \mathbb{F}_q[t] \text{ linear}} L_{p^{a_1}, \ldots p^{a_{i-1}}, p^{a_{i+1}}, \ldots p^{a_{n+1}}}(x_i) \nonumber \\
& \ + \sum_{\substack{(f_1, \ldots f_{i-1}, f_{i+1}, \ldots f_{n+1}) \\ \neq(p^{a_1}, \ldots p^{a_{i-1}}, p^{a_{i+1}}, \ldots p^{a_{n+1}})}} L_{f_1, \ldots f_{i-1}, f_{i+1}, \ldots f_{n+1}}(x_i).
\end{align}
If $\sum\limits_{j \sim i} a_j$ is odd, then $L_{p^{a_1}, \ldots p^{a_{i-1}}, p^{a_{i+1}}, \ldots p^{a_{n+1}}}(x_i)=\lambda_p(x_i)$. Hence
\begin{equation}
\lambda(x_i)-q \lambda_p(x_i)
\end{equation} 
satisfies the functional equation (\ref{oddfactor}). If $\sum\limits_{j \sim i} a_j$ is even, then $L_{p^{a_1}, \ldots p^{a_{i-1}}, p^{a_{i+1}}, \ldots p^{a_{n+1}}}(x_i)=\frac{1-x_i}{1-qx_i}\lambda_p(x_i)$, and
\begin{equation}
\lambda(x_i)-q \left(\frac{1-x_i}{1-qx_i}\right)\lambda_p(x_i)
\end{equation}
satisfies the functional equation (\ref{evenfactor}). Let us translate these functional equations into coefficient relations. If $\sum\limits_{j \sim i} a_j$ is odd, then 
\begin{align}
&c_{a_1,\ldots a_i, \ldots a_{n+1}}(q) -qH(p^{a_1}, \ldots p^{a_i}, \ldots p^{a_{n+1}}) \nonumber \\ 
&= q^{a_i-(\sum\limits_{j \sim i} a_j-1)/2} (c_{a_1,\ldots \sum\limits_{j \sim i} a_j-1-a_i, \ldots a_{n+1}}(q)-q(H(p^{a_1}, \ldots p^{\sum\limits_{j \sim i} a_j-1-a_i}, \ldots p^{a_{n+1}}))
\end{align}
By Axiom (\ref{dominance}), comparing coefficients with degree greater than $\frac{a_1+\cdots + a_{n+1}+1}{2}$ in $q$, we recover equation (\ref{oddrecursion}). If $\sum\limits_{j \sim i} a_j$ is even, then
\begin{align}
&c_{a_1,\ldots a_i, \ldots a_{n+1}}(q)-q c_{a_1,\ldots a_i-1, \ldots a_{n+1}}(q) \nonumber \\
& \ \ -qH(p^{a_1},\ldots p^{a_i}, \ldots p^{a_{n+1}}) +qH(p^{a_1},\ldots p^{a_i-1}, \ldots p^{a_{n+1}}) \nonumber \\
& = q^{a_i-(\sum\limits_{j \sim i} a_j)/2} \left(c_{a_1,\ldots \sum\limits_{j \sim i} a_j-a_i, \ldots a_{n+1}}(q)-q c_{a_1,\ldots \sum\limits_{j \sim i} a_j-a_i-1, \ldots a_{n+1}}(q)\right. \nonumber \\
& \ \ \left. -qH(p^{a_1},\ldots p^{\sum\limits_{j \sim i} a_j-a_i}, \ldots p^{a_{n+1}})+qH(p^{a_1},\ldots p^{\sum\limits_{j \sim i} a_j-a_i-1}, \ldots p^{a_{n+1}})\right)
\end{align}
and again, by Axiom (\ref{dominance}) we recover equation (\ref{evenrecursion}).
\end{proof}

The functional equations satisfied by the full series $Z(\x)$ may be modeled as follows. Let $\sigma_i(\x)$ be defined by
\begin{equation}
(\sigma_i(\x))_j=\left\lbrace\begin{array}{cc} q^{-1}x_j^{-1} & \text{ if } j=i \\ q^{1/2}x_i x_j & \text{ if } j \sim i \\ x_j & \text{ otherwise} \end{array} \right. . 
\end{equation}
One can check directly that $\sigma_i^2=1$, $\sigma_i \sigma_j \sigma_i=\sigma_j \sigma_i \sigma_j$ if $j \sim i$, and $\sigma_i \sigma_j= \sigma_j \sigma_i$ if $j\nsim i$, to show that the $\sigma_i$ satisfy the defining relations of simple reflections generating the Weyl or Coxeter group $W$.

Let $Z_{\sum\limits_{j\sim i} a_j \text{ odd}}(\x)$, $Z_{\sum\limits_{j\sim i} a_j \text{ even}}(\x)$ denote the power series $Z$ restricted to terms where $\sum\limits_{j\sim i} a_j$ is odd or even respectively. We have the following functional equations:
\begin{align}
& Z_{\sum\limits_{j\sim i} a_j \text{ odd}}(\sigma_i(\x))=q^{1/2} x_i Z_{\sum\limits_{j\sim i} a_j \text{ odd}}(\x), \\
& (1-x_i^{-1}) Z_{\sum\limits_{j\sim i} a_j \text{ even}}(\sigma_i(\x))= (1-qx_i) Z_{\sum\limits_{j\sim i} a_j \text{ even}}(\x).
\end{align}
At present, these functional equations are identities of formal power series only. Their meaning is simply that the coefficient relations (\ref{oddrecursion}) and (\ref{evenrecursion}) hold. In order to interpret the functional equations as equalities of meromorphic functions, $Z$ must have meromorphic continuation to neighborhoods of both $\x$ and $\sigma_i(\x)$, which we have not yet shown. 

If we define the ``$p$-part'' of the multiple Dirichlet series, for $p \in \mathbb{F}_q[t]$ prime, as 
\begin{equation}
Z_p(\x) = \sum_{a_1, \ldots a_{n+1} \geq 0} H(p^{a_1}, \ldots p^{a_{n+1}}) x_1^{a_1 \deg p} \cdots x_{n+1}^{a_{n+1} \deg p},
\end{equation} 
then we have the equivalent local functional equations:
\begin{align}
& Z_{p, \sum\limits_{j\sim i} a_j \text{ odd}}(\sigma_i(\x))=q^{\deg p/2} x_i^{\deg p} Z_{p, \sum\limits_{j\sim i} a_j \text{ odd}}(\x), \label{oddlocalfe} \\
& (1-q^{-\deg p}x_i^{-\deg p})Z_{p, \sum\limits_{j\sim i} a_j \text{ even}}(\sigma_i(\x)) = (1-x_i^{\deg p})Z_{p, \sum\limits_{j\sim i} a_j \text{ even}}(\x). \label{evenlocalfe}
\end{align}

\chapter{Consequences of the Functional Equations}\label{chap:3}
\section{The Family of Series Satisfying the Functional Equations}

In this chapter $\Gamma$ will be the Dynkin diagram of a classical simply-laced root system:

\includegraphics[scale=.30]{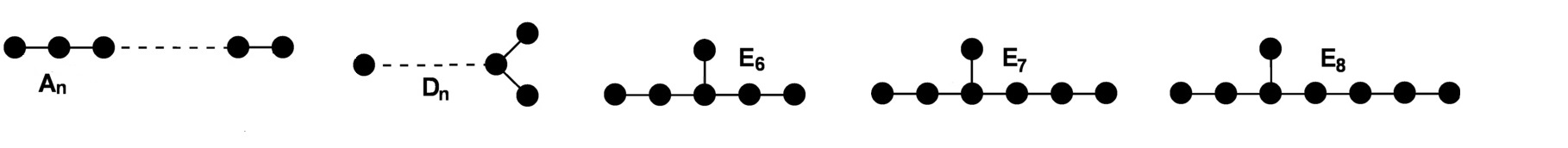}

\noindent or a simply-laced affine Kac-Moody root system:

\includegraphics[scale=.30]{affine}

\noindent with Weyl group $W$. We have shown that a multiple Dirichlet series $Z(x_1, \ldots x_{n+1})$ satisfying the four axioms must have a group of functional equations isomorphic to $W$, generated by the following simple reflections for $i=1, \ldots n+1$:
\begin{align}
& Z_{\sum\limits_{j\sim i} a_j \text{ odd}}(\sigma_i(\x))=q^{1/2} x_i Z_{\sum\limits_{j\sim i} a_j \text{ odd}}(\x), \label{oddglobalfe} \\
& (1-x_i^{-1}) Z_{\sum\limits_{j\sim i} a_j \text{ even}}(\sigma_i(\x))= (1-qx_i) Z_{\sum\limits_{j\sim i} a_j \text{ even}}(\x) \label{evenglobalfe}
\end{align}
where
\begin{equation}
(\sigma_i(\x))_j=\Bigg{\lbrace}\begin{array}{cc} q^{-1}x_j^{-1} & \text{ if } j=i \\ q^{1/2}x_i x_j & \text{ if } j \sim i \\ x_j & \text{ otherwise} \end{array}. 
\end{equation}
The functional equations make sense as relations of formal power series regardless of the analytic behavior of $Z$. In the following propositions, we do not assume the four axioms--we only assume the functional equations and study their consequences. First we will prove a result describing the family of all power series satisfying the functional equations.
\begin{prop} \label{diagonalcoeffs}
Suppose that a power series 
\begin{equation}
Z(x_1, \ldots x_{n+1})=\sum_{a_1, \ldots a_{n+1} \geq 0} c_{a_1, \ldots a_{n+1}} x_1^{a_1} \cdots x_{n+1}^{a_{n+1}}
\end{equation}
has group of functional equations $W$ generated by (\ref{oddglobalfe}) and (\ref{evenglobalfe}). If $W$ is a finite Weyl group, then $Z$ is unique up to a constant multiple. If $W$ is an affine Weyl group, then $Z$ is uniquely determined by the one-parameter family of ``diagonal'' coefficients $c_{m \alpha_0}$ where $\alpha_0$ is the imaginary root and $m \in \mathbb{Z}_{\geq 0}$.
\end{prop}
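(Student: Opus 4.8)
The plan is to convert the functional equations into the coefficient recurrences (\ref{oddrecursion}) and (\ref{evenrecursion}) and then run a height induction. Recall from Chapter 2 that (\ref{oddglobalfe}) and (\ref{evenglobalfe}), read off coefficient by coefficient, are exactly these recurrences, where one adopts the convention $c_\beta = 0$ for any multi-index $\beta = \sum b_i e_i$ lying outside the nonnegative cone $\Lambda^+ = \{\sum b_i e_i : b_i \geq 0\}$ that supports $Z$; concretely, for odd $\sum_{j\sim i}b_j$ one matches the coefficient of $\x^\beta$ on both sides of (\ref{oddglobalfe}) using $(\sigma_i\x)^\beta = q^{(\sum_{j\sim i}b_j)/2 - b_i}\,x_i^{(\sum_{j\sim i}b_j) - b_i}\prod_{j\neq i}x_j^{b_j}$, and similarly for (\ref{evenglobalfe}) — no axioms enter. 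Using the recurrences I will write each $c_\alpha$ as an explicit $\Z[q,q^{-1}]$-linear combination of a distinguished family of ``core'' coefficients, by strong induction on the height $\text{ht}(\alpha) = \sum a_i$.

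Call $\alpha = \sum a_i e_i \in \Lambda^+$ \emph{reduced} if $2a_i \leq \sum_{j\sim i}a_j$ for every $i$; equivalently $Ca \leq 0$ componentwise, where $C = 2I - A$ is the Cartan matrix of $\Gamma$ and $A$ its adjacency matrix. The first step is to show that a non-reduced $\alpha$ has $c_\alpha$ determined by coefficients of strictly smaller height. Pick $i$ with $2a_i > s := \sum_{j\sim i}a_j$ (so $a_i \geq 1$). If $s$ is odd, (\ref{oddrecursion}) reads $c_\alpha = q^{a_i - (s-1)/2}c_\beta$ with $\beta$ obtained from $\alpha$ by replacing its $i$-th entry by $s - 1 - a_i < a_i$, so $\text{ht}(\beta) \leq \text{ht}(\alpha) - 2$. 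If $s$ is even, solving (\ref{evenrecursion}) for $c_\alpha$ expresses it through $c_{\alpha - e_i}$, $c_{\beta'}$ and $c_{\beta' - e_i}$, where $\beta'$ has $i$-th entry $s - a_i < a_i$; here $\text{ht}(\alpha - e_i) = \text{ht}(\alpha) - 1$ and $\text{ht}(\beta') = \text{ht}(\alpha) + s - 2a_i \leq \text{ht}(\alpha) - 2$, so every term on the right has height $< \text{ht}(\alpha)$ (indices leaving $\Lambda^+$ simply contribute $0$). The second step classifies the reduced elements: if $a \geq 0$ and $Ca \leq 0$ then $\langle a, Ca\rangle \leq 0$, but the Cartan matrix of a finite or affine root system is positive semidefinite, forcing $\langle a, Ca\rangle = 0$ and $a \in \ker C$. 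For finite $\Gamma$, $C$ is positive definite, so $a = 0$; for affine $\Gamma$, $\ker C$ is the $W$-invariant line spanned by the minimal imaginary root $\alpha_0$ (indeed $\sigma_i\alpha_0 = \alpha_0$ for all $i$ is equivalent to $C\alpha_0 = 0$), and since $\alpha_0$ is the primitive vector in that line with nonnegative coordinates, its nonnegative integer points are exactly $\{m\alpha_0 : m \in \Z_{\geq 0}\}$.

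The two steps combine by strong induction on height: a reduced $\alpha$ is a core index, and a non-reduced $\alpha$ has $c_\alpha$ written, via the first step and the inductive hypothesis for the finitely many strictly-lower-height terms, as a $\Z[q,q^{-1}]$-linear combination of core coefficients of height $\leq \text{ht}(\alpha)$. For finite $W$ the only core coefficient is $c_{0,\ldots,0}$, so $Z$ is determined up to that scalar; for affine $W$ the core coefficients are the $c_{m\alpha_0}$, $m \geq 0$, so $Z$ is determined by this one-parameter family. (The parametrization is sharp, since at $\alpha = m\alpha_0$ one has $2a_i = \sum_{j\sim i}a_j$ for all $i$, making every instance of (\ref{oddrecursion}) and (\ref{evenrecursion}) at $m\alpha_0$ a tautology that imposes no relation among the $c_{m\alpha_0}$.) I expect the main obstacle to be the uniform bookkeeping in the inductive step — verifying that whichever recurrence is invoked at a non-reduced vertex strictly lowers the height of every term (including the auxiliary $c_{\alpha - e_i}$-type terms in the even case), and that out-of-cone indices may be discarded so that the reduction terminates — together with the linear-algebra input that the nonnegative solutions of $Ca \leq 0$ are exactly $\Z_{\geq 0}\alpha_0$ (resp. $\{0\}$), which is where positive (semi)definiteness of the Cartan matrix enters.
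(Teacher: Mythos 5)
Your proposal is correct and follows essentially the same route as the paper: translate the functional equations into the coefficient recurrences, reduce any index with $a_i>\frac{1}{2}\sum_{j\sim i}a_j$, and observe that the only irreducible nonnegative indices are $0$ (finite case) or the multiples $m\alpha_0$ (affine case). Your height induction and the positive-semidefiniteness argument $a\geq 0$, $Ca\leq 0\Rightarrow Ca=0$ simply make explicit the termination and classification steps the paper asserts without detail, so the two proofs coincide in substance (note only that your tautology remark shows the recurrences impose no relations at the diagonal, whereas the paper's stronger realizability claim --- that any diagonal family actually occurs --- is obtained there by multiplying $Z$ by an arbitrary series in the invariant monomial $\x^{\alpha_0}$).
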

This theorem is well-known in the classical case, though it is not usually stated this way in the literature. A proof for the affine Weyl group $\tilde{D}_4$ is given in Bucur-Diaconu \cite{BD}.

Next, we prove the existence of a power series satisfying the functional equations with the correct analytic behavior.
\begin{prop} \label{averaging}
There exists a power series $Z(x_1, \ldots x_{n+1})$ with group of functional equations $W$ generated by (\ref{oddglobalfe}) and (\ref{evenglobalfe}), which has meromorphic continuation to $\mathbb{C}^{n+1}$ if $W$ is finite or $|\x^{\alpha_0}|<q^{-\text{ht}(\alpha_0)/2}$ if $W$ is affine. Moreover, we have an explicit denominator $D(x_1, \ldots x_{n+1})$ such that $D(x_1, \ldots x_{n+1})Z(x_1, \ldots x_{n+1})$ is holomorphic in these domains. 
\end{prop}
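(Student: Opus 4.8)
The plan is to construct $Z$ by an averaging procedure over the Weyl group $W$, in the spirit of Chinta--Gunnells, adapted to the affine setting. Concretely, I would start from a generic rational "seed" function — a single monomial or a small combination thereof, chosen so that the denominators introduced below behave correctly — and form a formal sum $\sum_{w \in W} w \cdot (\text{seed})$, where $w$ acts on the variables $\x$ via the action $\sigma_i(\x)$ defined above, twisted by the appropriate cocycle so that the $Z_{\text{odd}}$ and $Z_{\text{even}}$ functional equations (\ref{oddglobalfe}), (\ref{evenglobalfe}) are satisfied by construction. The point of splitting into the parity classes $\sum_{j\sim i} a_j$ odd/even is exactly that the cocycle differs in the two cases — in the odd case one picks up a factor $q^{1/2}x_i$, in the even case a factor $(1-qx_i)/(1-x_i^{-1})$ — so the averaging must be set up class by class, or equivalently one writes the seed with the factor $\prod_i (1-x_i)$ (or a suitable variant) cleared so that a single rational average works and the two functional equations both drop out.

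The first real step is to make sense of the infinite sum $\sum_{w\in W}$ when $W$ is affine. This is where Proposition~1 on the classification of real roots (every root is $\alpha + m\alpha_0$ with $\alpha\in\Psi$, $m\in\Z$) does the essential work: after clearing the natural denominator
\[
D(\x) = \prod_{\alpha \in \Phi^+} \bigl(1 - q^{\,\text{ht}(\alpha)} \x^{2\alpha}\bigr)
\]
(or the appropriate mix of $1-q^{\text{ht}(\alpha)}\x^{\alpha}$ and $1-q^{\text{ht}(\alpha)}\x^{2\alpha}$ factors dictated by Proposition~\ref{dist}), the product $D(\x)\sum_{w\in W} w\cdot(\text{seed})$ should be a sum in which, grouping $\alpha$-roots by their $\Psi$-component, each fixed monomial in $\x$ receives only finitely many contributions provided $|\x^{\alpha_0}| < q^{-\text{ht}(\alpha_0)/2}$. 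Indeed the factors indexed by $\alpha + m\alpha_0$ contribute a geometric series in $q^{\text{ht}(\alpha_0)}\x^{2\alpha_0}$ (resp. $q^{\text{ht}(\alpha_0)/2}\x^{\alpha_0}$), and convergence of these geometric series is precisely the stated domain; outside it the poles of the summands accumulate at the boundary of the Tits cone, consistent with the theorem statement. So $D(\x)Z(\x)$ is holomorphic there, and $Z$ itself is meromorphic. In the finite case the sum is finite and $D(\x)Z(\x)$ is a polynomial, giving meromorphic — in fact rational — continuation to all of $\C^{n+1}$.

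The remaining steps are: (i) check that the averaged object really is a power series in the $x_i$ with constant term $1$, i.e. that it is holomorphic near the origin — this follows because each $w\cdot(\text{seed})$ with $w\neq 1$ contributes only positive-degree monomials after the denominator is cleared, a Bruhat-length bookkeeping argument; (ii) verify the two functional equations (\ref{oddglobalfe})--(\ref{evenglobalfe}) hold, which is immediate from the cocycle relation and the fact that $\sigma_i$ permutes $W$; and (iii) identify $D(\x)$ explicitly and confirm it is $W$-quasi-invariant so that the product $D(\x)Z(\x)$ transforms by a character and the holomorphy claim is consistent across the $W$-orbit.

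I expect the main obstacle to be step (i)/(ii) of the construction of the seed and cocycle — getting the averaging to produce an honest power series with nonnegative coefficients (rather than a Laurent series or something with spurious poles inside the domain) while simultaneously satisfying \emph{both} parity-split functional equations. In the finite Chinta--Gunnells setup this is handled by a clever choice of the rational seed and a careful cocycle computation; here the same must be done, and one must additionally verify that the formal geometric-series rearrangement used to pass from "$D(\x)\sum_w w\cdot(\text{seed})$ is a well-defined formal object" to "it converges and is holomorphic on $|\x^{\alpha_0}| < q^{-\text{ht}(\alpha_0)/2}$" is legitimate — that is, that the triple-indexed sum (over $w$, over $\Psi$-components, over $m\in\Z$) can be reordered, which requires an absolute-convergence estimate on that domain. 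This estimate, together with the root classification of Proposition~1, is the technical heart of the proof.
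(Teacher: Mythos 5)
Your strategy is the paper's: average a seed over $W$ under the Chinta--Gunnells action (the paper takes $Z(\x)=\sum_{w\in W}\Delta(w(\x))^{-1}(1|w)(\x)$, i.e.\ seed $\Delta(\x)^{-1}$ with $\Delta(\x)=\prod_{\alpha\in\Phi^+}(1-q^{\text{ht}(\alpha)}\x^{2\alpha})$), with the same outline: formal well-definedness, functional equations from the cocycle, convergence on the Tits-cone domain. But as written there are two genuine gaps. First, your denominator is the undeformed product $\prod_{\alpha\in\Phi^+}(1-q^{\text{ht}(\alpha)}\x^{2\alpha})$; that is the paper's $\Delta$, and it does not clear the poles of $Z$. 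The poles come from the zeta-type factors $(1-qx_i)^{-1}$ in the even part of the cocycle, so they lie on the deformed divisors $\x^{2\alpha}=q^{-\text{ht}(\alpha)-1}$ (e.g.\ $x_i=q^{-1}$, not $x_i=q^{-1/2}$), and the correct denominator is $D(\x)=\prod_{\alpha\in\Phi^+}(1-q^{\text{ht}(\alpha)+1}\x^{2\alpha})$; your hedge about mixing $\x^{\alpha}$ and $\x^{2\alpha}$ factors does not address this $q$-shift. Moreover the averaging by itself only gives holomorphy of $D(\x)\Delta(\x)Z(\x)$; to conclude that $D(\x)Z(\x)$ is holomorphic the paper must additionally show that $D\Delta Z$ vanishes along the zeroes of $\Delta$, which it does by using a functional equation to move each such divisor to $x_i=\pm q^{-1/2}$ and then pairing $w$ with $w\sigma_i$, the two halves of the averaged sum cancelling there since $(f|\sigma_i)(\x)=f(\x)$ and $(\sigma_i(\x))^{2\alpha}=\x^{2\alpha}$ at that locus. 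This cancellation step is absent from your proposal (one could instead declare $D\Delta$ to be the ``explicit denominator,'' but then the polar set is too large for the use made of $D$ later in the paper).

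Second, your convergence argument only accounts for the root strings: summing a geometric series in $q^{\text{ht}(\alpha_0)}\x^{2\alpha_0}$ along each string $\alpha+m\alpha_0$ controls the infinite products $D$ and $\Delta$, but not the sum over $w\in W$, which is the actual crux and which you flag without resolving. The summand at $w$ has size roughly $C^{\ell(w)}\,|\x^{\sum_{\alpha\in\Phi(w)}\alpha}|$, and since the number of elements of length $\ell$ may grow like $(n+1)^{\ell}$, one needs the quadratic lower bound $\sum_{\alpha\in\Phi(w)}\alpha\geq(\text{const})\,\ell(w)^2\,\alpha_0+O(\ell(w))$, obtained from the classification of roots as $\alpha+m\alpha_0$ with $\alpha$ in the finite set $\Psi$, so that $|\x^{\alpha_0}|<q^{-\text{ht}(\alpha_0)/2}$ beats the exponential factors. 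Note also that formal well-definedness of the power series needs no domain at all: it follows from $\sum_{\alpha\in\Phi(w)}\alpha\to\infty$ as $\ell(w)\to\infty$, so only finitely many $w$ contribute to each coefficient; your sketch conflates this formal statement with the analytic convergence estimate.
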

This is proven by the averaging construction of Chinta-Gunnells \cite{CG1, CG2}. A proof for symmetrizable Kac-Moody root systems is given in Lee-Zhang \cite{LZ}, but their formalism is very different, so we reprove it here. The averaging construction does not give a series which satisfies the four axioms, but we will modify it in the following chapters. 

\begin{proof}[Proof of Proposition (\ref{diagonalcoeffs})]
The proof is based on a detailed examination of the coefficient relations from Proposition (\ref{axiomsimplyfes}), which we restate here. For $\sum\limits_{j \sim i} a_j$ odd:
\begin{equation}
c_{a_1,\ldots a_i, \ldots a_{n+1}} = q^{a_i-(\sum\limits_{j \sim i} a_j-1)/2} c_{a_1,\ldots \sum\limits_{j \sim i} a_j-1-a_i, \ldots a_{n+1}} \label{oddrecursion2}
\end{equation}
and for $\sum\limits_{j \sim i} a_j$ even:
\begin{equation}
c_{a_1,\ldots a_i, \ldots a_{n+1}}-q c_{a_1,\ldots a_i-1, \ldots a_{n+1}} = q^{a_i-(\sum\limits_{j \sim i} a_j)/2} (c_{a_1,\ldots \sum\limits_{j \sim i} a_j-a_i, \ldots a_{n+1}}-q c_{a_1,\ldots \sum\limits_{j \sim i} a_j-a_i-1, \ldots a_{n+1}}). \label{evenrecursion2}
\end{equation}
For any list of indices $(a_1, \ldots a_{n+1})$, if we can find an $i$ such that $a_i>\frac{1}{2}\sum\limits_{j \sim i} a_j$, then the coefficient $c_{a_1, \ldots a_{n+1}}$ can be rewritten, via the $\sigma_i$ coefficient relation, in terms of coefficients with smaller $i$th index. For classical root systems, any coefficient $c_{a_1, \ldots a_{n+1}}$ can be reduced in this way, until we reach $c_{0, \ldots 0}$ (we assume that if any $a_i$ is negative, the coefficient is $0$). Thus in the classical case, the series is uniquely determined by $c_{0, \ldots 0}$--that is, it is determined up to a constant multiple. 

In the affine case, any nondiagonal coefficient can be reduced, but the diagonal coefficients $c_{m \alpha_0}$ cannot. The vector of indices $m\alpha_0$ is invariant under the Weyl group $W$. It follows immediately that all coefficients of the series are determined by the diagonal coefficients. Furthermore, the monomial $\x^{\alpha_0}$ is invariant under all $\sigma_i$, so multiplying $Z(x_1, \ldots x_{n+1})$ by any power series in $\x^{\alpha_0}$ does not affect the functional equations. In this way, we may obtain any possible family of diagonal coefficients. Hence the series $Z(x_1, \ldots x_{n+1})$ is completely determined by any choice of diagonal coefficients.
\end{proof}

This simple argument has many important consequences. 

\begin{cor} \label{onevariable}
In the affine case, the series $Z(x_1, \ldots x_{n+1})$ is determined by its functional equations up to multiplication by a power series in the the invariant monomial $\x^{\alpha_0}$.
\end{cor}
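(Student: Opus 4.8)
\textbf{Proof proposal for Corollary~\ref{onevariable}.} The plan is to deduce this directly from Proposition~\ref{diagonalcoeffs}, by reorganizing its conclusion into the language of modules over the ring $R$ of formal power series in the single invariant quantity $\x^{\alpha_0}$.

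First I would check that the set $M$ of all formal power series $Z(x_1,\ldots,x_{n+1})$ satisfying the functional equations (\ref{oddglobalfe}) and (\ref{evenglobalfe}) is an $R$-module; the only nontrivial point is that $\x^{\alpha_0}Z\in M$ whenever $Z\in M$. Since $\alpha_0$ is fixed by $W$, the action of $\sigma_i$ on monomials (which sends $\x^{\alpha}$ to $q^{(\mathrm{ht}(\sigma_i\alpha)-\mathrm{ht}(\alpha))/2}\x^{\sigma_i\alpha}$) fixes $\x^{\alpha_0}$ outright, so that factor passes unchanged through both sides of each functional equation. One also needs that multiplication by $\x^{\alpha_0}$ does not disturb the splitting of $Z$ into its ``$\sum_{j\sim i}a_j$ even'' and ``$\sum_{j\sim i}a_j$ odd'' parts; a direct computation from the $W$-action, using $\sigma_i(\alpha_0)=\alpha_0$, gives $\sum_{j\sim i}(\alpha_0)_j=2(\alpha_0)_i$, which is even, so the parity of $\sum_{j\sim i}a_j$ is unchanged when $\alpha_0$ is added to the exponent vector.

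Next I would invoke Proposition~\ref{diagonalcoeffs} in the form: the map $\Delta\colon M\to R$ sending $Z$ to the power series $\sum_{m\ge 0}c_{m\alpha_0}\,(\x^{\alpha_0})^m$ is a bijection --- uniqueness gives injectivity, and the statement that every choice of diagonal coefficients is realized gives surjectivity. Because $(\x^{\alpha_0}Z)_{m\alpha_0}=c_{(m-1)\alpha_0}$, the map $\Delta$ is $R$-linear, hence an isomorphism of $R$-modules, and in particular $M$ is free of rank one. Concretely, if $Z_0\in M$ has nonzero constant term then $\Delta(Z_0)$ is a unit of $R$, so $Z_0$ generates $M$, and every $Z\in M$ equals $g\cdot Z_0$ for a unique $g\in R$. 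This is precisely the assertion that $Z$ is determined by its functional equations up to multiplication by a power series in $\x^{\alpha_0}$.

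I do not expect a genuine obstacle: the mathematical content is entirely contained in the already-proven Proposition~\ref{diagonalcoeffs}. The only things requiring care are the elementary parity bookkeeping above (so that $\x^{\alpha_0}Z$ really still lies in $M$), and the observation that one must use the surjectivity half of Proposition~\ref{diagonalcoeffs}, not merely uniqueness --- with uniqueness alone one would only conclude that two solutions with identical diagonal coefficients agree, which is strictly weaker than the corollary.
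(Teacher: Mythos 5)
Your proposal is correct and follows essentially the same route as the paper: Corollary~\ref{onevariable} is extracted directly from the proof of Proposition~\ref{diagonalcoeffs}, using that all non-diagonal coefficients are recursively determined by the diagonal ones and that the $W$-invariance of $\x^{\alpha_0}$ lets one multiply by an arbitrary power series in $\x^{\alpha_0}$ without disturbing the functional equations. Your module-theoretic packaging (the diagonal-coefficient map as an $R$-module isomorphism, with the parity check $\sum_{j\sim i}(\alpha_0)_j=2(\alpha_0)_i$ made explicit) is just a cleaner bookkeeping of the same argument, and your remark that the surjectivity half of the proposition's proof is needed, not merely uniqueness, is accurate.
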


\begin{cor} \label{diagonaldominance}
If the undetermined coefficients $c_{m\alpha_0}$ (or just $c_{0, \ldots 0}$ in the finite case) satisfy Axiom (\ref{dominance}), then so do all the coefficients.
\end{cor}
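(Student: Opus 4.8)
The plan is to argue by strong induction on the total index $N := a_1 + \cdots + a_{n+1}$, reusing the reduction from the proof of Proposition \ref{diagonalcoeffs} and carrying along the $q$-degrees. Recall that if $(a_1,\dots,a_{n+1})$ is not a diagonal tuple $m\alpha_0$, one may pick $i$ with $a_i > \tfrac12\sum_{j\sim i}a_j$, and then the recursion (\ref{oddrecursion2}) (when $\sum_{j\sim i}a_j$ is odd) or (\ref{evenrecursion2}) (when it is even) rewrites $c_{a_1,\dots,a_{n+1}}$ as a $\Z[q]$-linear combination of coefficients $c_\mu$ whose index tuples $\mu$ agree with $(a_1,\dots,a_{n+1})$ outside the $i$th slot. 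First I would check that this induction is well-founded: in the odd case the one tuple appearing has total index $N - 2a_i + \sum_{j\sim i}a_j - 1\le N-2$, and in the even case the three tuples have total indices $N-1$, $N - 2a_i + \sum_{j\sim i}a_j\le N-2$, and $N - 2a_i + \sum_{j\sim i}a_j - 1\le N-3$, all strictly below $N$ by the choice of $i$ and integrality. So the induction bottoms out at the diagonal tuples $m\alpha_0$ — where Axiom \ref{dominance} holds by hypothesis (in the finite case there is only the tuple $(0,\dots,0)$) — and at the two tuples $(0,\dots,0)$ and $(0,\dots,0,1,0,\dots,0)$ that Axiom \ref{dominance} exempts.

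The inductive step is then a mechanical substitution. Suppose $c_\mu$ obeys the dominance bound for its own total index, i.e. $c_\mu(q)$ is supported in $q$-degrees strictly above $\tfrac{|\mu|+1}{2}$, and suppose it enters the recursion with a prefactor $q^{e}$. Plugging the explicit exponents of (\ref{oddrecursion2}) and (\ref{evenrecursion2}) into $e$, together with the values of $|\mu|$ computed above, one finds that $q^{e}c_\mu(q)$ is supported in $q$-degrees strictly above $\tfrac{N+1}{2}$: for the leading coefficient on the right — $c_{a_1,\dots,\sum_{j\sim i}a_j-1-a_i,\dots}$ in the odd case, $c_{a_1,\dots,\sum_{j\sim i}a_j-a_i,\dots}$ in the even case — the open lower end of the support interval lands exactly on $\tfrac{N+1}{2}$, while for the two remaining even terms it is pushed up to $\tfrac{N+2}{2}$. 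A finite sum of polynomials each supported in $q$-degrees above $\tfrac{N+1}{2}$ is again supported there, so $c_{a_1,\dots,a_{n+1}}$ obeys the dominance bound, completing the induction.

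The only place any care is needed is that the inductive hypothesis only governs the \emph{non-exempt} coefficients, so one must rule out the possibility that an exempt coefficient wrecks the substitution when it shows up on the right. A short case analysis shows that the reduction of a non-exempt tuple never produces $c_{0,\dots,0}$, and produces $c_{0,\dots,0,1,0,\dots,0}$ only through the lower-order term $q\,c_{a_1,\dots,a_i-1,\dots}$, and only when $N=2$ (namely when reducing $c_{0,\dots,0,2,0,\dots,0}$ or a coefficient with two $1$'s in non-adjacent slots). In exactly those cases the functional equations themselves force $c_{0,\dots,0,1,0,\dots,0}=q\,c_{0,\dots,0}$, so the suspect term is $q^{2}c_{0,\dots,0}$, supported in $q$-degrees $\ge 2 > \tfrac{3}{2}$; for this one only needs that $c_{0,\dots,0}(q)$ is an honest polynomial in $q$ (part of the standing hypothesis on the free coefficients). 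Beyond this bit of bookkeeping there is no real obstacle — the well-foundedness is immediate once one tracks the total index through the two recursions, and the degree check is a one-line computation — so the main effort is just cataloguing the finitely many ways the two exempt coefficients can enter a reduction.
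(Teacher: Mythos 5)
Your proposal is correct and takes essentially the same route as the paper: reduce a non-diagonal coefficient via the recursions (\ref{oddrecursion2}) and (\ref{evenrecursion2}) and observe that every term $c_{a_1,\ldots a_i',\ldots a_{n+1}}$ on the right is multiplied by a power of $q$ at least $\tfrac{a_i-a_i'}{2}$, which is exactly enough to push its dominance bound up to the one required for the original index. Your additional bookkeeping on well-foundedness of the induction and on how the exempt coefficients $c_{0,\ldots 0}$ and $c_{0,\ldots 0,1,0,\ldots 0}$ can appear on the right (only via the lower-order even term, only when the total index is $2$, where $c_{0,\ldots 0,1,0,\ldots 0}=q\,c_{0,\ldots 0}$ saves the estimate) is a careful refinement of the paper's brief argument rather than a different approach.
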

\begin{proof}
Suppose that we have $c_{a_1, \ldots a_{n+1}}$ with $a_i>\frac{\sum\limits_{j \sim i} a_j}{2}$, and we apply the coefficient recurrence associated to $\sigma_i$. In the relations (\ref{oddrecursion2}) and (\ref{evenrecursion2}), every term $c_{a_1, \ldots a_i', \ldots a_{n+1}}$ is multiplied by a power of $q$ between $\frac{a_i-a_i'}{2}$ and $a_i-a_i'$. Hence if all the lower coefficients satisfy the Dominance Axiom, then $c_{a_1, \ldots a_{n+1}}$ does as well. Repeating this process recursively, we see that dominance for the undetermined coefficients implies dominance for all coefficients.
\end{proof}

Now we will show in the finite Weyl group case that the unique series $Z(\x)$ with $c_{0, \ldots 0}=1$ also satisfies the four axioms. We begin by constructing the local coefficients $H(f_1, \ldots f_{n+1})$. Let $H(1, \ldots 1)=1$. The local functional equations (\ref{oddlocalfe}) and (\ref{evenlocalfe}) can be used, exactly as in Proposition (\ref{diagonalcoeffs}), to construct all $H(p^{a_1}, \ldots p^{a_{n+1}})$, and then Axiom (\ref{twistedmult}) gives all $H(f_1, \ldots f_{n+1})$. Set
\begin{align}
Z(x_1, \ldots x_{n+1}) &= \sum_{f_1, \ldots f_{n+1} \in \mathbb{F}_q[t] \text{ monic}} H(f_1, \ldots f_{n+1}) x_1^{\deg f_1} \cdots x_{n+1}^{\deg f_{n+1}} \\ &= \sum_{a_1, \ldots a_{n+1} \geq 0} c_{a_1, \ldots a_{n+1}} x_1^{a_1} \cdots x_{n+1}^{a_{n+1}}.
\end{align}
It follows as in the proof of Proposition (\ref{axiomsimplyfes}) that $Z(x_1, \ldots x_{n+1})$ satisfies (\ref{oddglobalfe}) and (\ref{evenglobalfe}). Hence it is the unique such series with $c_{0, \ldots 0}=1$. Because the $c_{a_1, \ldots a_{n+1}}$ are all obtained recursively from $c_{0, \ldots 0}$ and the $H(p^{a_1}, \ldots p^{a_{n+1}})$ are all obtained from $H(1, \ldots 1)$ by analogous recurrences, Axiom (\ref{localglobal}) is satisfied. By Corollary (\ref{diagonaldominance}), Axiom (\ref{dominance}) is satisfied. 

One could try to proceed this way in the affine case; the difficulty is that, if we make an arbitrary choice of diagonal local coefficients $H(p^{m\alpha_0})$, the resulting diagonal global coefficients $c_{m\alpha_0}$ will not necessarily satisfy Axiom (\ref{localglobal}). We will give a different interpretation of Axiom (\ref{localglobal}) in the following section to resolve this problem.

\section{The Chinta-Gunnells Averaged Series}

Before proving Proposition (\ref{averaging}), we introduce some additional notation and explain why the domains of meromorphic continuation given in the proposition are optimal. If $\Phi$ is the root system corresponding to $\Gamma$, we let 
\begin{equation}
D(x_1, \ldots x_{n+1})=\prod_{\alpha \in \Phi^+} (1-q^{\text{ht}(\alpha)+1}\x^{2\alpha}).
\end{equation}
This is a finite product over positive roots if $W$ is finite, and an infinite product if $W$ is affine. Each factor corresponds to a polar divisor of $Z(x_1, \ldots x_{n+1})$--it follows from Axiom (\ref{initialconditions}) or from the functional equations that $Z$ has poles at $x_i=q^{-1}$ for all $i$, and translating these poles by the group $W$ gives a pole at $\x^{\alpha}=q^{-(\text{ht}(\alpha)+1)/2}$ for every $\alpha \in \Phi^+$. 

If $W$ is finite, then $D(x_1, \ldots x_{n+1})$ is meromorphic in $\mathbb{C}^{n+1}$. If $W$ is affine, then
\begin{equation}
D(x_1, \ldots x_{n+1})=\prod_{\substack{\alpha \in \Phi^+ \\ \alpha \leq \alpha_0}} \prod_{m=0}^{\infty} (1-q^{\text{ht}(\alpha)+m\text{ht}(\alpha_0)+1}\x^{2\alpha+2m\alpha_0}).
\end{equation}
Let $M(\x)=\text{Max}_{\alpha \in \Phi^+, \alpha \leq \alpha_0}|q^{\text{ht}(\alpha)+1}\x^{2\alpha}|$. Then the absolute value of the product above is bounded by:
\begin{equation}
\prod_{m=0}^{\infty} (1+M(\x)q^{m \text{ht}(\alpha_0)}|\x^{2m \alpha_0}|)^{\#\lbrace \alpha \in \Phi^+, \alpha \leq \alpha_0 \rbrace}.
\end{equation} 
If $|\x^{\alpha_0}|<q^{-\text{ht}(\alpha_0)/2}$, we can ignore the finitely many terms where $q^{m \text{ht}(\alpha_0)}|\x^{2m \alpha_0}| \geq M(\x)^{-1}$, and the logarithm of the remaining terms is bounded by a convergent geometric series. Thus $D(x_1, \ldots x_{n+1})$ converges absolutely to a holomorphic function in this domain. 

The transformation $\sigma_i$ permutes the factors of $D$, except for $(1-q^2x_i^2)$, which becomes $(1-x_i^{-2})$, so we have 
\begin{equation}
D(\sigma_i(x_1, \ldots x_{n+1}))=\left(\frac{1-x_i^{-2}}{1-q^2x_i^2}\right) D(x_1, \ldots x_{n+1})
\end{equation}
or, more generally for any $w\in W$,
\begin{equation}
D(w(x_1, \ldots x_{n+1}))=\left(\prod_{\alpha \in \Phi(w)} \frac{1-q^{-\text{ht}(\alpha)+1}\x^{-2\alpha}}{1-q^{\text{ht}(\alpha)+1}\x^{2\alpha}}\right) D(x_1, \ldots x_{n+1})
\end{equation}
where $\Phi(w)$ denotes $\Phi^+ \cap w^{-1}(\Phi^-)$.

In the affine case, the poles of $Z(x_1, \ldots x_{n+1})$, or zeroes of $D(x_1, \ldots x_{n+1})$, accumulate along the divisor $\x^{\alpha_0}=q^{-\text{ht}(\alpha_0)/2}$, the boundary of the Tits cone for $W$. $Z(x_1, \ldots x_{n+1})$ has an essential singularity at this divisor, so meromorphic continuation beyond it is impossible. Figure \ref{poles} shows the zeroes of $D(x_1, x_2, x_3, x_4)$ for the $\tilde{A}_3$ multiple Dirichlet series, after making the substitution $x_1, x_3 \mapsto q^{-s}$, $x_2, x_4 \mapsto q^{-t}$. The zeroes appear as lines in the real $st$ plane, accumulating at $s+t=1$. 
\\

\begin{figure}
\includegraphics[scale=.42]{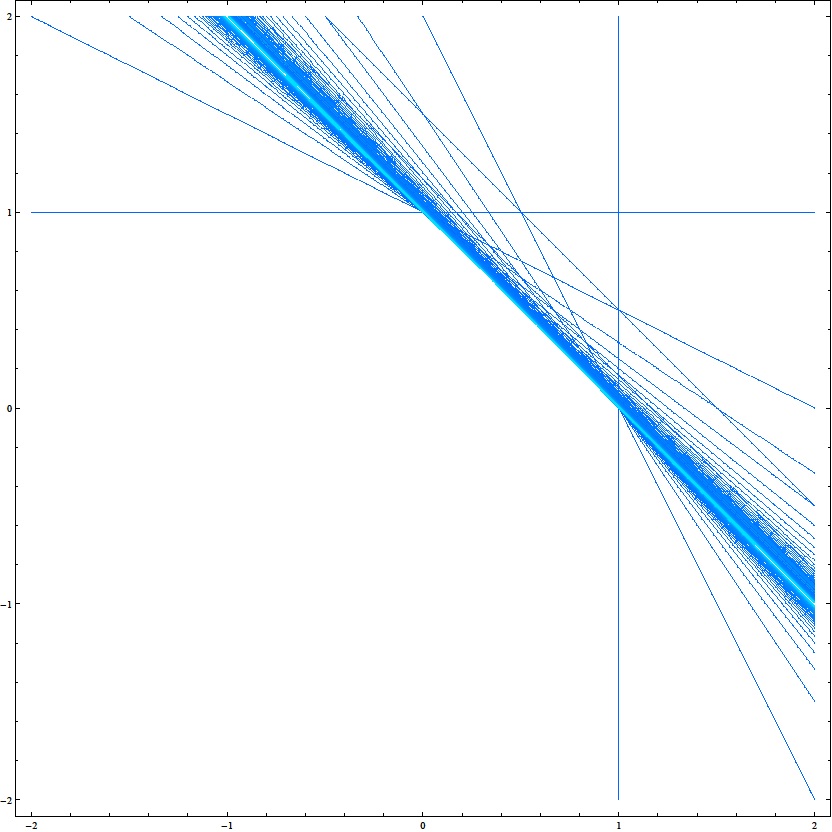}
\caption{Poles of the $\tilde{A}_3$ Multiple Dirichlet Series}
\label{poles}
\end{figure}

$D(x_1, \ldots x_{n+1})$ can be viewed as a deformation of the denominator in the Weyl character formula for $W$. We will also need a version of this denominator without the deformation: 
\begin{equation}
\Delta(x_1, \ldots x_{n+1})=\prod_{\alpha \in \Phi^+} (1-q^{\text{ht}(\alpha)}\x^{2\alpha}).
\end{equation}
This converges in the same domain as $D(x_1, \ldots x_{n+1})$, and satisfies
\begin{equation} \label{deltafe}
\Delta(w(x_1, \ldots x_{n+1}))=\left(\prod_{\alpha \in \Phi(w)} -q^{-\text{ht}(\alpha)}\x^{-2\alpha}\right) \Delta(x_1, \ldots x_{n+1}). 
\end{equation}

\begin{proof}[Proof of Proposition (\ref{averaging})]
This proof is modeled upon work of Chinta and Gunnells \cite{CG1}, who describe an action of finite Weyl groups on rational functions. The generalization to affine Weyl groups acting on power series is straightforward. We define maps $\epsilon_i:\mathbb{C}^{n+1}\to \mathbb{C}^{n+1}$ by  
\begin{equation}
(\epsilon_i(x_1, \ldots x_{n+1}))_j=\bigg{\lbrace}\begin{array}{cc} -x_j & \text{ if } j \sim i \\ x_j & \text{ else} \end{array}
\end{equation}
so that
\begin{align}
Z_{\sum\limits_{j\sim i} a_j \text{ odd}}(x_1, \ldots x_{n+1})&=\frac{Z(x_1, \ldots x_{n+1})-Z(\epsilon_i(x_1, \ldots x_{n+1}))}{2} \\
Z_{\sum\limits_{j\sim i} a_j \text{ even}}(x_1, \ldots x_{n+1})&=\frac{Z(x_1, \ldots x_{n+1})+Z(\epsilon_i(x_1, \ldots x_{n+1}))}{2}
\end{align}
For an arbitrary power series $f(\x)\in \mathbb{C}[[x_1, \ldots x_{n+1}]][x_1^{-1}, \ldots x_{n+1}^{-1}]$, we define 
\begin{equation}
(f|\sigma_i)(\x)=(1-x_i^{-1})(1-qx_i)^{-1} f_{\sum\limits_{j\sim i} a_j \text{ even}}(\sigma_i(\x))+q^{-1/2}x_i^{-1} f_{\sum\limits_{j\sim i} a_j \text{ odd}}(\sigma_i(\x)).
\end{equation}
The $(1-qx_i)^{-1}$ in this formula is shorthand for its geometric series expansion. Chinta and Gunnells prove that this definition extends to a $\mathbb{C}$-linear group action of $W$ on rational functions \cite[Lemma 3.2]{CG1}; the proof carries over to power series without modification. Note that our definition is slightly different from equation (3.13) of \cite{CG1}, because we are modeling the global, rather than local, functional equations. Indeed, equations (\ref{oddglobalfe}) and (\ref{evenglobalfe}) are equivalent to the statement that $(Z|w)=Z$ for all $w\in W$. 

To rephrase, we have
\begin{equation}
(f|\sigma_i)(\x)=\frac{x_i^{-1}}{2}\left(\frac{x_i-1}{1-qx_i}+q^{-1/2}\right) f(\sigma_i(\x))+\frac{x_i^{-1}}{2}\left(\frac{x_i-1}{1-qx_i}-q^{-1/2}\right)f(\epsilon_i\sigma_i(\x)).
\end{equation}
Let us set $K_i(\x, \delta)=\frac{x_i^{-1}}{2}\left(\frac{x_i-1}{1-qx_i}+(-1)^{\delta}q^{-1/2}\right)$. Then if $w\in W$ can be expressed as $\sigma_{i_1}\cdots \sigma_{i_{\ell}}$, we have
\begin{align}
(f|w)(\x)=\sum_{\delta_{i_1}, \ldots \delta_{i_{\ell}} \in \lbrace 0,1\rbrace} & K_{i_{\ell}}(\x, \delta_{i_{\ell}}) K_{i_{\ell-1}}(\epsilon_{i_{\ell}}^{\delta_{i_{\ell}}}\sigma_{i_{\ell}}(\x), \delta_{i_{\ell-1}}) K_{i_{\ell-2}}(\epsilon_{i_{\ell-1}}^{\delta_{i_{\ell-1}}}\sigma_{i_{\ell-1}}\epsilon_{i_{\ell}}^{\delta_{i_{\ell}}}\sigma_{i_{\ell}}(\x), \delta_{i_{\ell-2}}) \cdots \nonumber \\ &\cdots K_{i_1}(\epsilon_{i_2}^{\delta_{i_2}}\sigma_{i_2}\cdots\epsilon_{i_{\ell}}^{\delta_{i_{\ell}}}\sigma_{i_{\ell}}(\x), \delta_{i_1})f(\epsilon_{i_1}^{\delta_{i_1}}\sigma_{i_1}\cdots\epsilon_{i_{\ell}}^{\delta_{i_{\ell}}}\sigma_{i_{\ell}}(\x))
\end{align}
In particular, the monomials appearing in the $K_i$ are $x_{i_\ell}$, $(\sigma_{i_{\ell}}(\x))_{i_{\ell-1}}$, $(\sigma_{i_{\ell-1}}\sigma_{i_{\ell}}(\x))_{i_{\ell-2}}$, ... $(\sigma_{i_2}\cdots\sigma_{i_{\ell}}(\x))_{i_1}$. If $w=\sigma_{i_1}\cdots \sigma_{i_{\ell}}$ is an expression in reduced form, then these are precisely $q^{(\text{ht}(\alpha)-1)/2}\x^{\alpha}$ for $\alpha \in \Phi(w)$. 

We will show that $Z(\x):=\sum_{w \in W} \Delta(w(\x))^{-1}(1|w)(\x)$ is a power series satisfying functional equations (\ref{oddglobalfe}) and (\ref{evenglobalfe}), with meromorphic continuation to all $\x$ if $W$ is finite and to all $|\x^{\alpha_0}|<q^{-\text{ht}(\alpha_0)/2}$ if $W$ is affine. First note that, by equation (\ref{deltafe}), $\Delta(w(\x))^{-1}=\left(\prod_{\alpha \in \Phi(w)} q^{\text{ht}(\alpha)}\x^{2 \alpha}\right)\Delta(\x)^{-1}$, and $\Delta(\x)^{-1}$ is a power series in $\mathbb{C}[[\x]]$. On the other hand, $(1|w)(\x)$ is $\prod_{\alpha \in \Phi(w)} q^{(1-\text{ht}(\alpha))/2}\x^{-\alpha}$ times a power series in $\mathbb{C}[[\x]]$. Thus the summand at $w$, $\Delta(w(\x))^{-1}(1|w)(\x)$ is a power series divisible by $q^{(\ell(w)+\sum_{\alpha \in \Phi(w)} \text{ht}(\alpha))/2}\x^{\sum_{\alpha \in \Phi(w)} \alpha}$. In particular, no negative exponents of $x_i$ appear in $Z(\x)$. Moreover, in the affine case, since $\sum_{\alpha \in \Phi(w)} \alpha \to \infty$ as $\ell(w) \to \infty$, only finitely many terms in the sum over $w$ contribute to each coefficient of $Z(\x)$, so the sum is a well-defined power series. 

To show that $Z(\x)$ satisfies the functional equations, we use a simple lemma, also proven in \cite{CG1}:
\begin{lemma}
If $f(\x), g(\x)$ are power series with $f(\x)$ even in all $x_i$, then 
\begin{equation}
(fg|w)(\x)=f(w(\x))(g|w)(\x).
\end{equation}
\end{lemma}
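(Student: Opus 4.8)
The plan is to induct on $\ell(w)$, reducing to the case of a single simple reflection $\sigma_i$ and then invoking the fact, recalled above, that $f \mapsto f|w$ is a group action. The one structural fact needed beyond the definitions is that being \emph{even in all $x_i$} is preserved by each $\sigma_i$. This is not automatic, since $\sigma_i$ mixes the variables, but it is immediate once one observes that ``even in all $x_i$'' simply means invariant under the group $(\Z/2)^{n+1}$ of sign changes $x_k \mapsto -x_k$, equivalently that every monomial occurring involves only even powers of each $x_k$; since $\sigma_i$ carries $x_i^2 \mapsto q^{-2}x_i^{-2}$, $x_j^2 \mapsto q x_i^2 x_j^2$ for $j \sim i$, and $x_j^2 \mapsto x_j^2$ otherwise, it sends such a function to another one of the same kind. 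I would record this as a sublemma: if $f$ is even in all $x_i$ then so is $f \circ \sigma_i$, and hence so is $f \circ w$ for every $w \in W$.

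The base case is the identity $(fg)|\sigma_i = (f\circ\sigma_i)\,(g|\sigma_i)$ when $f$ is even in all $x_i$. The key point is that $\epsilon_i$ only changes the signs of the variables $x_j$ with $j \sim i$, a subset of all the variables, so $f \circ \epsilon_i = f$; consequently $(fg)\circ\epsilon_i = f\,(g\circ\epsilon_i)$, and applying the formulas $h_{\sum_{j\sim i}a_j\text{ even}} = \tfrac12(h + h\circ\epsilon_i)$, $h_{\sum_{j\sim i}a_j\text{ odd}} = \tfrac12(h - h\circ\epsilon_i)$ gives $(fg)_{\sum_{j\sim i}a_j\text{ even}} = f\,g_{\sum_{j\sim i}a_j\text{ even}}$ and $(fg)_{\sum_{j\sim i}a_j\text{ odd}} = f\,g_{\sum_{j\sim i}a_j\text{ odd}}$. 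Substituting these into the definition of $(fg)|\sigma_i$ and factoring the common term $f(\sigma_i(\x))$ out of both summands yields exactly $(f\circ\sigma_i)(\x)\,(g|\sigma_i)(\x)$.

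For the inductive step, the case $w = e$ is trivial, so assume $\ell(w) \geq 1$ and write $w = \sigma_i w'$ with $\ell(w') = \ell(w) - 1$, taking $\sigma_i$ to be the first letter of a reduced word for $w$. Because $|$ is a group action, $(fg)|w = \big((fg)|\sigma_i\big)|w' = \big((f\circ\sigma_i)(g|\sigma_i)\big)|w'$ by the base case. The function $f\circ\sigma_i$ is even in all $x_i$ by the sublemma, so the inductive hypothesis applies to the product $(f\circ\sigma_i)\cdot(g|\sigma_i)$ at $w'$ and gives $(f\circ\sigma_i)(w'(\x))\cdot\big((g|\sigma_i)|w'\big)(\x)$. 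Finally $(f\circ\sigma_i)(w'(\x)) = f\big(\sigma_i(w'(\x))\big) = f(w(\x))$ since $W$ acts on $\x$ on the left, and $(g|\sigma_i)|w' = g|(\sigma_i w') = g|w$ since $|$ is a group action; this is $f(w(\x))(g|w)(\x)$, as required.

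I do not anticipate any real difficulty here: the only place where a moment's thought is needed is the preservation of ``even in all $x_i$'' under $\sigma_i$, and that is disposed of by the observation that evenness means being a function of the $x_k^2$. The rest is an unwinding of the definition of $|\sigma_i$ together with the group-action property already established in the Chinta-Gunnells construction.
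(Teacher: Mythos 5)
Your proof is correct: the reduction to a single simple reflection via $f\circ\epsilon_i=f$ (so $(fg)_{\text{even}}=fg_{\text{even}}$, $(fg)_{\text{odd}}=fg_{\text{odd}}$), followed by induction on length using the group-action property $f|(\sigma_i w')=(f|\sigma_i)|w'$ and the stability of evenness under $\sigma_i$, is exactly the standard argument. The paper itself does not prove the lemma but defers to Chinta--Gunnells \cite{CG1}, and your write-up supplies essentially that same proof, so no gap remains.
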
 
Then, since $\Delta$ is even, 
\begin{equation}
(Z|w')(\x) = \sum_{w \in W} \Delta(ww'(\x))^{-1}(1|ww')(\x)=Z(\x)
\end{equation}
This argument applies whether we consider $Z$ as a formal power series, or a meromorphic function on a suitable domain.

Next we show that $D(\x)\Delta(\x)Z(\x)$ converges absolutely in the desired domain. If $W$ is finite, this is automatic, as $D(\x)\Delta(\x)Z(\x)$ is a polynomial. If $W$ is affine, recall that we used the bound 
\begin{equation}
|1-q^{\text{ht}(\alpha)+m\text{ht}(\alpha_0)+1}\x^{2\alpha+2 m \alpha_0}| \leq 1+M(\x)q^{m\text{ht}(\alpha_0)}|\x^{2m\alpha_0}|
\end{equation}
for factors of $D(\x)$. Furthermore $(1+M(\x)q^{m\text{ht}(\alpha_0)}|\x^{2m\alpha_0}|)(1+q^{-1/2})$ bounds 
\begin{equation}
\left|\left(1-q^{\text{ht}(\alpha)+m\text{ht}(\alpha_0)+1}\x^{2\alpha+2 m \alpha_0}\right)\left(\frac{\pm q^{(\text{ht}(\alpha)+m(\text{ht}(\alpha_0)-1)/2}\x^{\alpha+m\alpha_0}-1}{1 \pm q^{(\text{ht}(\alpha)+m\text{ht}(\alpha_0)+1)/2}\x^{\alpha+m\alpha_0}} \pm q^{-1/2}\right)\right|.
\end{equation}
Combining these bounds, and using equation (\ref{deltafe}), we have:
\begin{align}
|D(\x)\Delta(\x)Z(\x)| \leq & \left(\sum_{w \in W} (1+q^{-1/2})^{\ell(w)} q^{(\ell(w)+\sum_{\alpha \in \Phi(w)} \text{ht}(\alpha))/2}|\x^{\sum_{\alpha \in \Phi(w)} \alpha}|\right) \nonumber \\ &\left(\prod_{m=0}^{\infty} (1+M(\x)q^{m \text{ht}(\alpha_0)}|\x^{2m \alpha_0}|)^{\#\lbrace \alpha \in \Phi^+, \alpha \leq \alpha_0 \rbrace}\right)
\end{align}
The second product is independent of $w$ and converges absolutely for $|\x^{\alpha_0}|<q^{-\text{ht}(\alpha_0)/2}$, so it suffices to show the convergence of the first sum. Let $A=\#\lbrace \alpha \in \Phi^+, \alpha \leq \alpha_0 \rbrace$. Assuming that the roots in $\Phi(w)$ are as small as possible, we still must have 
\begin{equation}
\sum_{\alpha \in \Phi(w)} \alpha \geq \frac{1}{2} A \bigg{\lfloor}\frac{\ell(w)}{A}\bigg{\rfloor} \left(\bigg{\lfloor}\frac{\ell(w)}{A}\bigg{\rfloor} -1\right) \alpha_0,
\end{equation}
which grows as $\frac{1}{2}\ell(w)^2 + O(\ell(w))$. We have 
\begin{align}
&\sum_{w \in W} (1+q^{-1/2})^{\ell(w)} q^{(\ell(w)+\sum_{\alpha \in \Phi(w)} \text{ht}(\alpha))/2}|\x^{\sum_{\alpha \in \Phi(w)} \alpha}| \nonumber \\
&\leq \sum_{w \in W} (q^{1/2}+1)^{\ell(w)}M(\x)^{\ell(w)}|q^{\text{ht}(\alpha_0)/2}\x^{\alpha_0}|^{\frac{1}{2}\ell(w)^2+O(\ell(w))} \nonumber \\ 
&\leq \sum_{\ell=0}^{\infty} (n+1)^{\ell}(q^{1/2}+1)^{\ell}M(\x)^{\ell}|q^{\text{ht}(\alpha_0)/2}\x^{\alpha_0}|^{\frac{1}{2}\ell^2+O(\ell)}
\end{align}
where the last inequality follows from the weak bound $\#\lbrace w\in W: \ell(w)=\ell \rbrace \leq (n+1)^{\ell}$. Hence the sum converges for $|\x^{\alpha_0}|<q^{-\text{ht}(\alpha_0)/2}$. 

Finally we show that $D(\x)Z(\x)$ is also holomorphic, by proving that $D(\x)\Delta(\x)Z(\x)$ vanishes at all the zeroes of $\Delta(\x)$, i.e. at $\x^\alpha= \pm q^{-\text{ht}(\alpha)/2}$ for all $\alpha \in \Phi^+$. For any root $\alpha$ we can find $w \in W$ mapping $\alpha$ to the simple root $e_i$, and the $w$-functional equation allows us to express $D(\x)\Delta(\x)Z(\x)$ with $\x^\alpha= \pm q^{-\text{ht}(\alpha)/2}$ in terms of various $D(\x)\Delta(\x)Z(\x')$ with $x_i'=\pm q^{1/2}$. Thus it suffices to show that $D(\x)\Delta(\x)Z(\x)$ vanishes at all $\x$ with $x_i=\pm q^{1/2}$. 

We will decompose $W$ into two parts: the set $\lbrace w : e_i \in \Phi(w) \rbrace$, or elements with a reduced expression ending in $\sigma_i$, and its complement. Multiplication by $\sigma_i$ gives a bijection between these sets. We may write $D(\x)\Delta(\x)Z(\x)$ as
\begin{equation}
D(\x) \left( \sum_{\substack{w \in W \\ e_i \not{\in} \Phi(w)}} (-1)^{\ell(w)}(1|w)(\x)\prod_{\alpha \in \Phi(w)} q^{\text{ht}(\alpha)}\x^{2\alpha}  + \sum_{\substack{w \in W \\ e_i \in \Phi(w)}} (-1)^{\ell(w)}(1|w)(\x)\prod_{\alpha \in \Phi(w)} q^{\text{ht}(\alpha)}\x^{2\alpha} \right)
\end{equation}
and the latter sum is 
\begin{align}
& \sum_{\substack{w \in W \\ e_i \not{\in} \Phi(w)}} (-1)^{\ell(w\sigma_i)}(1|w\sigma_i)(\x)\prod_{\alpha \in \Phi(w\sigma_i)} q^{\text{ht}(\alpha)}\x^{2\alpha} \nonumber \\
&=-\sum_{\substack{w \in W \\ e_i \not{\in} \Phi(w)}} (-1)^{\ell(w)}((1|w)|\sigma_i)(\x) q x_i^2 \prod_{\alpha \in \Phi(w)} q^{\text{ht}(\alpha)}(\sigma_i(\x))^{2\alpha}.
\end{align}
At $x_i=\pm q^{-1/2}$, we have $(\sigma_i(\x))^{2\alpha}=\x^{2 \alpha}$, and also $(f|\sigma_i)(\x)=f(\x)$ for any $f$. In this case the two sums cancel, giving the desired result. 

This completes the proof of Proposition (\ref{averaging}). 
\end{proof} 

We will denote the series constructed above as $Z_{\text{avg}}$ in the sequel. A direct computation of a few power series coefficients is sufficient to show that $Z_{\text{avg}}$ cannot satisfy the four axioms in the affine case. However, by Corollary (\ref{onevariable}), a series $Z(\x)$ satisfying the axioms must be $Z_{\text{avg}}$ multiplied by a power series in one variable. We cannot obtain the ratio explicitly, because we do not have enough information about the zeroes of $Z_{\text{avg}}$, but we can still obtain analytic information about $Z$ from $Z_{\text{avg}}$.

\begin{cor}\label{cont1}
If the series $Z(\x)$ satisfies the four axioms, then it has meromorphic continuation to $|\x^{\alpha_0}|<q^{-\text{ht}(\alpha_0)}$.
\end{cor}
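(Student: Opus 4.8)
The whole statement reduces, via Corollary \ref{onevariable}, to a one‑variable fact. By that corollary we may write
$Z(\x)=g(\x^{\alpha_0})\,Z_{\mathrm{avg}}(\x)$
for a single‑variable power series $g(u)=\sum_{m\ge 0}g_m u^m$. Since Proposition \ref{averaging} already gives meromorphic continuation of $Z_{\mathrm{avg}}$ to the \emph{larger} domain $|\x^{\alpha_0}|<q^{-\mathrm{ht}(\alpha_0)/2}$, it suffices to show that $g$ is meromorphic on the disc $|u|<q^{-\mathrm{ht}(\alpha_0)}$: the product $g(\x^{\alpha_0})Z_{\mathrm{avg}}(\x)$ is then meromorphic on $|\x^{\alpha_0}|<q^{-\mathrm{ht}(\alpha_0)}$, and it agrees with $Z$ on the polydisc $\{|x_i|<q^{-1}\}$ where the latter converges, providing the continuation.

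\textbf{Passing to the diagonal.} Let $b_m$ be the coefficient of $\x^{m\alpha_0}$ in $Z_{\mathrm{avg}}$, and set $C(u)=\sum_m c_{m\alpha_0}u^m$, $B(u)=\sum_m b_m u^m$. Comparing $\x^{m\alpha_0}$‑coefficients in $Z=g(\x^{\alpha_0})Z_{\mathrm{avg}}$ yields the formal identity $C(u)=g(u)B(u)$, with $B(0)=b_0=1$ (in the averaging sum only $w=1$ contributes to the constant term). The key point is that $C$ and $B$ both extend holomorphically to $|u|<q^{-\mathrm{ht}(\alpha_0)}$. Indeed $Z$ converges absolutely, hence is holomorphic, on $\{|x_i|<q^{-1}\}$ by Axioms \ref{localglobal} and \ref{dominance}; and $Z_{\mathrm{avg}}$ is holomorphic there too, because by Proposition \ref{averaging} its polar divisors lie along $\x^{\alpha}=\pm q^{-(\mathrm{ht}(\alpha)+1)/2}$ for $\alpha\in\Phi^+$, which never meet $\{|x_i|<q^{-1}\}$ since there $|\x^{\alpha}|<q^{-\mathrm{ht}(\alpha)}\le q^{-(\mathrm{ht}(\alpha)+1)/2}$. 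A function holomorphic on the polydisc $\{|x_i|<q^{-1}\}$ has Taylor series absolutely convergent at every interior point; restricting that sum to the monomials $\x^{m\alpha_0}$ shows that each of $C$ and $B$ has radius of convergence at least $\sup_{|x_i|<q^{-1}}|\x^{\alpha_0}|=q^{-\mathrm{ht}(\alpha_0)}$.

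\textbf{Conclusion.} Since $C$ and $B$ are holomorphic on $|u|<q^{-\mathrm{ht}(\alpha_0)}$ and $B\not\equiv 0$ (as $B(0)=1$), the ratio $g=C/B$ is meromorphic on that disc and agrees with the power series $g$ near $0$. Hence $g(\x^{\alpha_0})=C(\x^{\alpha_0})/B(\x^{\alpha_0})$ is a ratio of holomorphic functions on the connected domain $|\x^{\alpha_0}|<q^{-\mathrm{ht}(\alpha_0)}$, and multiplying by $Z_{\mathrm{avg}}(\x)$ (itself a ratio of holomorphic functions there, by Proposition \ref{averaging}) gives the desired meromorphic continuation of $Z$.

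\textbf{Main obstacle.} The one genuinely conceptual point is to realize that one should ask only for $g$ to be \emph{meromorphic}, not holomorphic, on the target disc. A direct attempt to bound the coefficients $g_m$ runs into the possibility that $B(u)$ vanishes somewhere inside $|u|<q^{-\mathrm{ht}(\alpha_0)}$, which would cap the radius of convergence of the power series $g$; this does no harm once poles are permitted and one exploits the fact that $Z_{\mathrm{avg}}$ already lives on the strictly larger domain. A secondary, purely technical check is confirming that $Z_{\mathrm{avg}}$ (and not merely $D(\x)Z_{\mathrm{avg}}(\x)$) is genuinely holomorphic on the small polydisc, i.e. that none of the polar divisors produced by the factors of $D(\x)$ intrude there.
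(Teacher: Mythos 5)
Your argument is correct and follows essentially the same route as the paper: both write $Z$ as a one-variable series in $\x^{\alpha_0}$ times $Z_{\mathrm{avg}}$ (Corollary \ref{onevariable}), use absolute convergence of $Z$ on the polydisc $\{|x_i|<q^{-1}\}$ together with Proposition \ref{averaging} to see that this one-variable factor is meromorphic for $|\x^{\alpha_0}|<q^{-\mathrm{ht}(\alpha_0)}$, and then multiply back by $Z_{\mathrm{avg}}$. Your passage to the diagonal generating functions $C$ and $B$ is just a careful spelling-out of the step the paper states tersely as ``the ratio is a power series in $\x^{\alpha_0}$, hence meromorphic on that disc.''
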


\begin{proof}
By Axiom (\ref{dominance}), $Z(\x)$ is absolutely convergent if all $x_i<q^{-1}$ Hence the ratio $Z(\x)/Z_{\text{avg}}(\x)$ is a meromorphic function in the same domain. By Proposition (\ref{onevariable}), this ratio is a power series in the variable $\x^{\alpha_0}$, so it must be meromorphic for $|\x^{\alpha_0}|<q^{-\text{ht}(\alpha_0)}$. Multiplying it by $Z_{\text{avg}}(\x)$ again proves the corollary.
\end{proof}

\chapter{Residues of $Z(\x)$}\label{chap:4}
\section{Definition of the Residue}

In this section, we restrict to affine Kac-Moody Weyl groups $W$. We make an additional assumption for technical reasons, that the Dynkin diagram $\Gamma$ of $W$ is two-colorable: we have a partition of the vertices $\lbrace 1, \ldots n+1 \rbrace=S\amalg T$ such that any two adjacent vertices belong to opposite sets. This only rules out $\tilde{A}_n$ for $n$ even. We will abbreviate the restriction of an $n+1$-tuple to one set or the other--for example, if $\x=(x_1, \ldots x_{n+1})$, then $\x_S=(x_i)_{i \in S}$. 

Let $Z(x_1, \ldots x_{n+1})$ be a series satisfying the four axioms. Then $Z(\x)$ has a polar hyperplane along each $x_i=q^{-1}$. We will study the behavior of a particular residue of $Z$, namely
\begin{equation}
R(\x_S):=(-q)^{|T|} \text{Res}_{\x_T=(q^{-1}, \ldots q^{-1})} Z(\x).
\end{equation}
This residue exists and is meromorphic for $|\x_S^{\alpha_0|_S}|<q^{-\text{ht}(\alpha_0|_S)}$, by Corollary (\ref{cont1}). However, we will approach the residue as another formal power series. Taking a residue may not be a well-defined operation on an arbitrary formal power series in several variables. However, we may multiply our series $Z(\x)$ by $1-qx_i$ and then evaluate it at $x_i=q^{-1}$. By Proposition ({\ref{axiomsimplyfes}), this evaluation only requires taking finite sums of series coefficients, so it is well-defined. This is the meaning of $-q\text{Res}_{x_i=q^{-1}}$.

The series $Z(\x)$ can be recovered from $R(\x_S)$, but $R(\x_S)$ has properties which make it more amenable to computation. In particular, the coefficients of $R$ are multiplicative, not twisted multiplicative, so $R$ has an Euler product expression. We will identify a symmetry in this expression which corresponds to Axiom (\ref{localglobal}). The Euler product for $R(\x_S)$ can be separated into a diagonal and an off-diagonal factor. Assuming that the off-diagonal factor is fixed, and using the local-global symmetry, we will prove Theorem (\ref{main}), that $Z(\x)$ exists and is uniquely characterized by the four axioms. We will explicitly compute the off-diagonal factor in the following section. 

First let us verify the statement that $Z(\x)$ can be recovered from $R(\x_S)$, using only the functional equations. Indeed, if $Z(\x)$ and $Z'(\x)$ are two series satisfying the functional equations (\ref{oddglobalfe}) and (\ref{evenglobalfe}), then Corollary (\ref{onevariable}) gives 
\begin{equation}
\frac{Z(\x)}{Z'(\x)}=F(\x^{\alpha_0}),
\end{equation}
a power series in one variable. The ratio of the corresponding residues $R(\x_S)$ and $R'(\x_S)$ must be essentially the same:
\begin{equation}
\frac{R(\x_S)}{R'(\x_S)}=F(q^{-\text{ht}(\alpha_0|_T)} \x_S^{\alpha_0|_S}),
\end{equation}
where $\alpha_0|_S$ denotes the projection of the root $\alpha_0$ onto the space spanned by simple roots in $S$. If we compare diagonal parts of all these series, the result is the same. Let $Z_{\text{diag}}(x)$, $Z'_{\text{diag}}(x)$ denote the diagonal parts of the series $Z$, $Z'$, with $x$ substituted for the variable $\x^{\alpha_0}$, and similarly let $R_{\text{diag}}(x)$, $R'_{\text{diag}}(x)$ be the diagonal parts of $R$, $R'$, with $x$ substituted for $\x_S^{\alpha_0|_S}$. Then 
\begin{equation}
\frac{Z_{\text{diag}}(x)}{Z'_{\text{diag}}(x)}=\frac{R_{\text{diag}}(q^{\text{ht}(\alpha_0|_T)}x)}{R'_{\text{diag}}(q^{\text{ht}(\alpha_0|_T)}x)}
\end{equation}
or, equivalently, 
\begin{equation} \label{residuetodiagonal}
G(x)=\frac{R_{\text{diag}}(q^{\text{ht}(\alpha_0|_T)}x)}{Z_{\text{diag}}(x)}
\end{equation}
is a one-variable power series  depending only on the functional equations, not on the choice of $Z(\x)$. It can be thought of as the $R_{\text{diag}}(q^{\text{ht}(\alpha_0|_T)}x)$ if we choose $c_{0,\ldots 0}(q)=1$ and $c_{m\alpha_0}(q)=0$ for all other diagonal coefficients of $Z(\x)$. Starting with $R_{\text{diag}}(q^{\text{ht}(\alpha_0|_T)}x)$ and multiplying by $G(x)^{-1}$ gives $Z_{\text{diag}}(x)$, which is known by Proposition (\ref{diagonalcoeffs}) to determine $Z(\x)$.

\section{A Symmetry}

Next we prove a proposition relating the coefficients of $R(\x_S)$ to those of $Z(\x)$. Given a list of indices $a_1, \ldots a_{n+1}$, let $A(i)$ denote $\sum\limits_{j \sim i} a_j$. Let $N(i)$ denote $\#\lbrace j\sim i \rbrace$.

\begin{prop} \label{residueformula1}
Suppose that $Z(\x)=\sum\limits_{a_1, \ldots a_{n+1}} c_{a_1, \ldots a_{n+1}}(q) x_1^{a_1}\cdots x_{n+1}^{a_{n+1}}$. Then the coefficient of $\prod_{i\in S} x_i^{a_i}$ in $R(\x_S)$ is $c_{a_1', \ldots a_{n+1}'}(q)q^{-\sum_{i\in S} a_i N(i)} $ where 
\begin{equation} \label{extenda}
a_i'=\bigg{\lbrace}\begin{array}{cc} a_i & i \in S \\ A(i) & i \in T\end{array}.
\end{equation}
In particular, only terms with $A(i)$ even for all $i \in T$ appear in $R(\x_S)$. 
\end{prop}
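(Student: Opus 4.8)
The plan is to compute the residue one variable at a time, and at each stage to use Proposition (\ref{axiomsimplyfes}) to control the relevant one-variable slice. Fix the exponents $(a_j)_{j\in S}$. The first observation I would record is that two-colorability is doing real work here: since $\Gamma$ is two-colorable with parts $S\amalg T$, the set $T$ is independent, so for each $i\in T$ every neighbor $j\sim i$ lies in $S$; consequently $A(i)=\sum_{j\sim i}a_j$ is already a number fixed by $(a_j)_{j\in S}$, and in particular it is unaffected by modifying any other $T$-exponent. I would also rewrite $(-q)^{|T|}\text{Res}_{\x_T=(q^{-1},\ldots,q^{-1})}$ as the iterated operator $\prod_{k}\bigl(-q\,\text{Res}_{x_{i_k}=q^{-1}}\bigr)$ over an enumeration $T=\{i_1,\ldots,i_{|T|}\}$, using that $-q\,\text{Res}_{x=q^{-1}}\tfrac{g(x)}{1-qx}=g(q^{-1})$ because $1-qx=-q(x-q^{-1})$.

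Next I would handle the first variable $x_{i_1}$. For fixed values of all the remaining exponents $a_j$, $j\neq i_1$ (including those in $T$), Proposition (\ref{axiomsimplyfes}) describes the slice $\lambda(x_{i_1})=\sum_{a_{i_1}\ge 0}c_{\ldots,a_{i_1},\ldots}(q)\,x_{i_1}^{a_{i_1}}$: if $A(i_1)$ is odd it is a polynomial, hence has no pole at $x_{i_1}=q^{-1}$ and the residue vanishes; this already yields the ``in particular'' clause, since whether $A(i_1)$ is odd depends only on $(a_j)_{j\in S}$. If $A(i_1)$ is even, then $\lambda(x_{i_1})=P(x_{i_1})/(1-qx_{i_1})$ with $P$ a polynomial of degree $A(i_1)$ whose $x_{i_1}^d$-coefficient is $p_d=c_{\ldots,d,\ldots}(q)-q\,c_{\ldots,d-1,\ldots}(q)$, with $c_{\ldots,-1,\ldots}=0$. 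The key one-line computation is the telescoping
\begin{equation}
-q\,\text{Res}_{x_{i_1}=q^{-1}}\lambda(x_{i_1})=P(q^{-1})=\sum_{d=0}^{A(i_1)}\bigl(c_{\ldots,d,\ldots}(q)-q\,c_{\ldots,d-1,\ldots}(q)\bigr)q^{-d}=c_{\ldots,A(i_1),\ldots}(q)\,q^{-A(i_1)},
\end{equation}
where the finiteness of the sum, i.e. $\deg P\le A(i_1)$, is exactly what makes the telescoping legitimate. Thus $-q\,\text{Res}_{x_{i_1}=q^{-1}}Z$ has the effect of promoting the $i_1$-exponent to $A(i_1)$ and extracting a factor $q^{-A(i_1)}$.

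Then I would iterate. After removing $x_{i_1}$, the $x_{i_2}$-slice of the resulting series (for fixed remaining exponents) is $q^{-A(i_1)}$ times the $x_{i_2}$-slice of $Z$ with the $i_1$-slot frozen at $A(i_1)$; since $i_1\not\sim i_2$, the parameter $A(i_2)$ governing Proposition (\ref{axiomsimplyfes}) is untouched, the residue is again well-defined (only finite sums of coefficients are needed, as in the discussion preceding the proposition), and the same dichotomy and telescoping apply verbatim. Running through $i_1,\ldots,i_{|T|}$ gives that the coefficient of $\prod_{i\in S}x_i^{a_i}$ in $R(\x_S)$ equals $c_{a_1',\ldots,a_{n+1}'}(q)\prod_{i\in T}q^{-A(i)}$ with $a_i'$ as in (\ref{extenda}), vanishing unless every $A(i)$, $i\in T$, is even. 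The final step is a bookkeeping identity: because two-colorability makes every edge of $\Gamma$ join $S$ to $T$, an incidence count gives $\sum_{i\in T}A(i)=\sum_{i\in T}\sum_{j\sim i}a_j=\sum_{j\in S}a_j\,\#\{i\sim j\}=\sum_{i\in S}a_i N(i)$, so $\prod_{i\in T}q^{-A(i)}=q^{-\sum_{i\in S}a_i N(i)}$, as claimed.

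I expect the only genuinely delicate point to be the legitimacy of the iteration: verifying that each partial residue still falls under Proposition (\ref{axiomsimplyfes}) with respect to the not-yet-integrated $T$-variables, and hence that the iterated residue is a well-defined operation on formal power series. This is precisely where the two-colorability hypothesis is essential, since it guarantees that the functional-equation data $A(i)$ for $i\in T$ never involve another $T$-variable, so freezing $T$-exponents one at a time cannot disturb the slices that remain. The telescoping identity and the incidence count are then routine.
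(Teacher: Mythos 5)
Your proposal is correct and follows essentially the same route as the paper's proof: use Proposition (\ref{axiomsimplyfes}) to identify each $x_i$-slice ($i \in T$) as a polynomial when $A(i)$ is odd (residue zero) and as a polynomial plus a geometric tail with denominator $1-qx_i$ when $A(i)$ is even, so that $-q\,\text{Res}_{x_i=q^{-1}}$ extracts $c_{\ldots,A(i),\ldots}(q)q^{-A(i)}$, then iterate over $T$ and convert $\sum_{i\in T}A(i)$ into $\sum_{i\in S}a_iN(i)$ by the incidence count. Your telescoping evaluation of $P(q^{-1})$ and your explicit remark that two-colorability keeps the $A(i)$, $i\in T$, independent of the other $T$-exponents are just slightly more detailed renderings of what the paper does and leaves implicit.
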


\begin{proof}
Fix all indices except one $a_i$ for $i \in T$. Proposition (\ref{axiomsimplyfes}) implies that the series in $x_i$, $\sum\limits_{a_i=0}^{\infty} c_{a_1, \ldots a_{n+1}}(q) x_1^{a_1}\cdots x_{n+1}^{a_{n+1}}$, can be written as
\begin{equation}
\left\lbrace\begin{array}{cc} \sum\limits_{a_i=0}^{A(i)-1} c_{a_1, \ldots a_{n+1}}(q) x_1^{a_1}\cdots x_{n+1}^{a_{n+1}} & A(i) \text{ odd} \\ \sum\limits_{a_i=0}^{A(i)-1} c_{a_1, \ldots a_{n+1}}(q) x_1^{a_1}\cdots x_{n+1}^{a_{n+1}}+ \frac{c_{a_1, \ldots A(i), \ldots a_{n+1}}(q) x_1^{a_1}\cdots x_i^{A(i)}\cdots x_{n+1}^{a_{n+1}}}{1-qx} & A(i) \text{ even} \end{array}\right. .
\end{equation}
Then taking $-q \text{Res}_{x_i=q^{-1}}$ gives $0$ if $A(i)$ is odd, and $c_{a_1, \ldots A(i), \ldots a_{n+1}}(q) x_1^{a_1}\cdots q^{-A(i)}\cdots x_{n+1}^{a_{n+1}}$ if $A(i)$ is even. Repeating this process for all $i \in T$ gives 
\begin{align}
R(\x_S)&=\sum_{(a_i)_{i\in S}} c_{a_1', \ldots a_{n+1}'}(q) q^{-\sum_{i\in T} A(i)} \prod_{i \in S} x_i^{a_i} \nonumber \\
&=\sum_{(a_i)_{i \in S}} c_{a_1', \ldots a_{n+1}'}(q) q^{-\sum_{i\in S} a_i N(i)} \prod_{i \in S} x_i^{a_i}
\end{align}
where $a_i'$ are as in the statement of the proposition. The rearrangements of power series implicit in this proof are only reorderings of finite sums, by Proposition (\ref{axiomsimplyfes}). They can also be justified by the absolute convergence of $Z(\x)$ for all $x_i< q^{-1}$. 
\end{proof}

Since we have shown that 
\begin{equation}
(-q)^{|T|} \text{Res}_{\x_T=(q^{-1}, \ldots q^{-1})} Z(\x)=(-q)^{|T|} \text{Res}_{\x_T=(q^{-1}, \ldots q^{-1})} Z_{A(i) \text{ even}}(\x)
\end{equation}
for all $i \in T$, we may apply the even $\sigma_i$ functional equations to the residue. By equation (\ref{evenglobalfe}), we have:
\begin{align} \label{residuefe}
R(\x_S) &= (-q)^{|T|} \text{Res}_{\x_T=(q^{-1}, \ldots q^{-1})} \prod_{i \in T} \frac{1-x_i^{-1}}{1-qx_i} Z_{A(i) \text{ even }\forall i\in T}((\prod_{i \in T} \sigma_i)(\x)) \nonumber \\
&=(1-q)^{|T|}Z_{A(i) \text{ even }\forall i\in T}(x_1', \ldots x_{n+1}') 
\end{align}
where 
\begin{equation}
x_i'=\left\lbrace \begin{array}{cc} q^{-N(i)/2} x_i & \text{if } i\in S \\ 1 & \text{if } i \in T \end{array} \right. .
\end{equation} 
Multiplying the series $Z(\x)$ by $(1-qx_i)$ and evaluating at $x_i=1$ is a well-defined operation--again, by Proposition (\ref{axiomsimplyfes}), it only involves taking finite sums of coefficients. This allows us to prove the following alternate formula for $R(\x_S)$:

\begin{prop} \label{residueformula2}
We have 
\begin{equation}
R(\x_S)=\sum_{\substack{(f_i)_{i\in S} \\ f_i \in \mathbb{F}_q[t] \text{ monic}}} H(f_1', \ldots f_{n+1}') \prod_{i \in S} (q^{-N(i)/2} x_i)^{\deg f_i}
\end{equation}
where
\begin{equation} \label{extendf}
f_i'=\left\lbrace \begin{array}{cc} f_i & i \in S \\ \prod_{j \sim i} f_j & i \in T \end{array} \right. .
\end{equation}
In particular, only $|S|$-tuples of polynomials with $\prod_{j\sim i} f_j$ a perfect square for all $i \in T$ contribute to this sum.
\end{prop}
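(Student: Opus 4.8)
The plan is to compute the residue defining $R(\x_S)$ one variable at a time — this is legitimate because, by Corollary~\ref{cont1}, $Z(\x)$ is meromorphic in a neighbourhood of $\x_T=(q^{-1},\dots,q^{-1})$ with at most a simple pole along each hyperplane $x_i=q^{-1}$, $i\in T$ — and to isolate the effect of a single operation $-q\,\mathrm{Res}_{x_i=q^{-1}}$ on the expansion $Z(\x)=\sum H(f_1,\dots,f_{n+1})\prod x_j^{\deg f_j}$. Fixing $i$ and the polynomials $g_j$ ($j\neq i$) in all other slots, one is reduced to the inner series $L_{(g_j)_{j\neq i}}(x_i)=\sum_{g_i\text{ monic}}H(g_1,\dots,g_{n+1})\,x_i^{\deg g_i}$ already analysed in the proof of Proposition~\ref{axiomsimplyfes}. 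The key claim I would establish is
\begin{equation*}
-q\,\mathrm{Res}_{x_i=q^{-1}}\,L_{(g_j)_{j\neq i}}(x_i)=
\begin{cases}
q^{-A(i)/2}\,H\!\left(g_1,\dots,g_{i-1},\ \textstyle\prod_{j\sim i}g_j,\ g_{i+1},\dots,g_{n+1}\right),& \textstyle\prod_{j\sim i}g_j\ \text{a perfect square},\\[4pt]
0,& \text{otherwise},
\end{cases}
\end{equation*}
where $A(i)=\sum_{j\sim i}\deg g_j=\deg\prod_{j\sim i}g_j$ (automatically even in the first case).

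For the claim, recall from the proof of Proposition~\ref{axiomsimplyfes} that $L_{(g_j)_{j\neq i}}(x_i)$ factors as a product of quadratic reciprocity constants, a Dirichlet factor $L(x_i,\chi_g)$ with $g$ the squarefree part of $\prod_{j\sim i}g_j$, and finitely many modified Euler factors indexed by the primes dividing $\prod_{j\neq i}g_j$; the modified factors contribute polynomials in $x_i$ (by the $\lambda_p$-part of Proposition~\ref{axiomsimplyfes}, using Axioms~\ref{localglobal} and~\ref{dominance}), and $L(x_i,\chi_g)$ is a polynomial unless $g=1$, in which case $L(x_i,\chi_1)=\zeta(x_i)=(1-qx_i)^{-1}$. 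Hence $L_{(g_j)_{j\neq i}}$ has a pole at $x_i=q^{-1}$ exactly when $\prod_{j\sim i}g_j$ is a perfect square, and then only a simple one, which is the ``otherwise'' case. When $\prod_{j\sim i}g_j$ is a perfect square, $(1-qx_i)L_{(g_j)_{j\neq i}}(x_i)$ is the polynomial obtained by cancelling $\zeta(x_i)$, and the even-case functional equation $(q^{1/2}x_i)^{A(i)}(1-x_i^{-1})L_{(g_j)_{j\neq i}}(q^{-1}x_i^{-1})=(1-qx_i)L_{(g_j)_{j\neq i}}(x_i)$ from Proposition~\ref{axiomsimplyfes}, evaluated at $x_i=q^{-1}$, gives
\begin{equation*}
-q\,\mathrm{Res}_{x_i=q^{-1}}\,L_{(g_j)_{j\neq i}}(x_i)=\bigl[(1-qx_i)L_{(g_j)_{j\neq i}}(x_i)\bigr]\big|_{x_i=q^{-1}}=q^{-A(i)/2}\bigl[(1-qx_i)L_{(g_j)_{j\neq i}}(x_i)\bigr]\big|_{x_i=1}.
\end{equation*}
It then remains to show $\bigl[(1-qx_i)L_{(g_j)_{j\neq i}}(x_i)\bigr]\big|_{x_i=1}=H\!\left(g_1,\dots,g_{i-1},\prod_{j\sim i}g_j,g_{i+1},\dots,g_{n+1}\right)$. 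Using the factorization, the left side is a finite product over the primes $p\mid\prod_{j\neq i}g_j$ (all others contributing $1$), and the right side is likewise a finite product over those primes by twisted multiplicativity (Axiom~\ref{twistedmult}); one then matches the two prime by prime, the local input being that the modified Euler factor at $p$, evaluated at $x_i=1$, equals $H(p^{v_p(g_1)},\dots,p^{\sum_{j\sim i}v_p(g_j)},\dots,p^{v_p(g_{n+1})})$ up to the reciprocity correction — which follows from the even local functional equation~(\ref{pevenfactor}) by the same residue-at-$1$ argument — together with the reciprocity symbols on the two sides agreeing (here $q\equiv 1\bmod 4$ and Axiom~\ref{initialconditions} are used). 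This prime-by-prime unwinding of twisted multiplicativity against the Euler product of Proposition~\ref{axiomsimplyfes} is the main obstacle; everything else is bookkeeping.

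To finish, apply the claim successively over $i\in T$. Since $T$ is one of the two colour classes, distinct elements of $T$ are non-adjacent, so once the $i$-slot has been replaced by $\prod_{j\sim i}g_j$ (an element indexed by $T$, hence not adjacent to any other vertex of $T$) the inner series in $x_{i'}$ for the next $i'\in T$ is still of the form governed by Proposition~\ref{axiomsimplyfes}, with pole condition depending only on the polynomials in the slots $j\sim i'$ — all of which lie in $S$ and are untouched. Thus each residue in turn forces $\prod_{j\sim i}f_j$ to be a perfect square, replaces the $i$-slot by $\prod_{j\sim i}f_j$, and contributes $q^{-A(i)/2}$. Because every neighbour of a vertex of $S$ lies in $T$, $\sum_{i\in T}A(i)=\sum_{i\in T}\sum_{j\sim i}\deg f_j=\sum_{j\in S}N(j)\deg f_j$, so the accumulated power of $q$ combines with the surviving $\prod_{j\in S}x_j^{\deg f_j}$ to give exactly $\prod_{i\in S}(q^{-N(i)/2}x_i)^{\deg f_i}$, and we obtain the asserted formula together with the stated vanishing unless $\prod_{j\sim i}f_j$ is a perfect square for all $i\in T$. (Equivalently, summed over the free index at $i$, the claim says $\sum_{\deg g_i=A(i)}H(g_1,\dots,g_{n+1})$ equals $q^{A(i)/2}H(g_1,\dots,\prod_{j\sim i}g_j,\dots,g_{n+1})$ when $\prod_{j\sim i}g_j$ is a square and $0$ otherwise; iterating this over $i\in T$ turns the coefficient formula of Proposition~\ref{residueformula1} directly into that of Proposition~\ref{residueformula2}.)
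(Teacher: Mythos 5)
Your proposal is correct and is essentially the paper's own argument: the paper likewise fixes the polynomials outside one $T$-slot, invokes the Euler-product analysis of the inner one-variable series from Proposition~\ref{axiomsimplyfes} to see that only perfect-square conductors $\prod_{j\sim i}f_j$ survive, identifies the value of $(1-qx_i)$ times that series at $x_i=1$ with $H(f_1,\ldots,\prod_{j\sim i}f_j,\ldots,f_{n+1})$ prime by prime, and iterates over $i\in T$ using two-colorability. The only cosmetic difference is that the paper performs the passage from the residue at $\x_T=(q^{-1},\ldots,q^{-1})$ to the evaluation at $\x_T=(1,\ldots,1)$ (with the $q^{-N(i)/2}$ shifts in the $S$-variables) once and for all via the global even functional equations in (\ref{residuefe}), and disposes of non-square conductors through the trivial zero of $L(x_i,\chi_{g_0})$ at $x_i=1$, whereas you carry out the equivalent conversion term by term with the even functional equation of each inner $L$-series and kill the non-square terms by the absence of a pole at $x_i=q^{-1}$.
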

\begin{proof}
First, we observe that if $(f_1', \ldots f_{n+1}')$ has this form, then 
\begin{equation}
H(f_1', \ldots f_{n+1}')=\prod_{p| f_1' \cdots f_{n+1}'} H(p^{v_p(f_1')}, \ldots p^{v_p(f_{n+1}')})
\end{equation}
as all the quadratic residue symbols in Axiom (\ref{twistedmult}) multiply to $1$. For $i \in T$, we see that $v_p(f_i')=\sum_{j\sim i} v_p(f_j')$. By Proposition (\ref{axiomsimplyfes}), this means that $H$ vanishes unless $\sum_{j\sim i} v_p(f_j')$ is even for all $i \in T$, i.e. unless all the $\prod_{j\sim i} f_j$ are perfect squares.

Fix all polynomials except one $f_i$ for $i \in T$. We assume that $\sum_{j \sim i} \deg f_j=A(i)$ is even, and we write $\prod_{j \sim i}f_j$ as $g_0 g_1^2$ for $g_0$ squarefree of even degree. As in the proof of Proposition (\ref{axiomsimplyfes}), the series $\sum_{f_i} H(f_1, \ldots f_{n+1}) x_i^{\deg f_i}$ matches $L(x_i, \chi_{g_0})$ up to multiplication by a polynomial. In particular, if $g_0 \neq 1$, then the series has a trivial zero at $x_i=1$. If $g_0=1$, i.e. $\prod_{j\sim i} f_j$ is a perfect square, then the series $\sum_{f_i} H(f_1, \ldots f_{n+1}) x_i^{\deg f_i}$ matches the zeta function $(1-qx_i)^{-1}$ up to multiplication by a correction polynomial of the form:
\begin{equation}
\prod_{p|g_1} \left(H(p^{v_p(f_1)}, \ldots p^{v_p(\prod_{j\sim i} f_j)}, \ldots p^{v_p(f_{n+1})}) x_i^{v_p(\prod_{j\sim i} f_j) \deg p}+(1-x_i^{\deg p})(\ldots)\right)
\end{equation}
Multiplying the series by $1-qx_i$ and evaluating at $x_i=1$ gives $H(f_1, \ldots \prod_{j\sim i} f_j, \ldots f_{n+1})$. Repeating this process for all $i \in T$ verifies the proposition. The rearrangements of power series implicit in this computation are again only reorderings of finite sums.
\end{proof}

The first part of this proof indicates that the function $(f_i)_{i \in S} \mapsto H(f_1', \ldots f_{n+1}')$ with $f_i'$ defined by equation (\ref{extendf}) is multiplicative, not just twisted multiplicative: that is, if $\gcd(\prod_{i\in S} f_i, \prod_{i\in S} g_i)=1$, then 
\begin{equation}
H(f_1'g_1', \ldots f_{n+1}'g_{n+1}')=H(f_1', \ldots f_{n+1}')H(g_1', \ldots g_{n+1}').
\end{equation} 
This, together with Proposition (\ref{residueformula2}), gives $R(\x_S)$ an Euler product expression:
\begin{equation}
R(\x_S)=\prod_{\substack{p \in \mathbb{F}_q[t] \\  \text{prime}}} \left(\sum_{\substack{(a_i)_{i\in S} \\ a_i \geq 0}} H(p^{a_1'}, \ldots p^{a_{n+1}'}) \prod_{i \in S} (q^{-N(i)/2} x_i)^{a_i \deg p} \right)
\end{equation}
where the $a_i'$ are as in equation (\ref{extenda}). 

Indeed, if we take the formula for $R(\x)$ of Proposition (\ref{residueformula1}):
\begin{equation}
R(\x_S)=\sum_{\substack{(a_i)_{i\in S} \\ a_i \geq 0}} c_{a_1', \ldots a_{n+1}'}(q) \prod_{i \in S} (q^{-N(i)} x_i)^{a_i}
\end{equation}
and make the substitution $q \mapsto q^{-\deg p}$, $x_i \mapsto (q^{1-N(i)/2} x_i)^{\deg p}$, by Axiom (\ref{localglobal}) we obtain the Euler factor:
\begin{equation}
\sum_{\substack{(a_i)_{i\in S} \\ a_i \geq 0}} H(p^{a_1'}, \ldots p^{a_{n+1}'}) \prod_{i \in S} (q^{-N(i)/2} x_i)^{a_i \deg p}.
\end{equation}
Hence the residue has the following property, which is the analogue of the Local-Global Axiom:
\begin{property} \label{symmetry}
If $(1-q^{\mu}\prod\limits_{i \in S} x_i^{\nu_i})^{-\lambda}$ is a factor of $R(\x_S)$, then $(1-q^{1-\mu+\sum\limits_{i\in S} (1-N(i)/2)\nu_i}\prod\limits_{i \in S} x_i^{\nu_i})^{-\lambda}$ is also a factor.
\end{property}
Note that any power series in $q, x_1, \ldots x_{n+1}$ can be expressed, at least formally, as a product of factors of this form. 

Moreover, given an arbitrary power series $Z(\x)=\sum\limits_{a_1, \ldots a_{n+1}} c_{a_1, \ldots a_{n+1}}(q) x_1^{a_1}\cdots x_{n+1}^{a_{n+1}}$ satisfying the functional equations (\ref{oddglobalfe}) and (\ref{evenglobalfe}), there exists a choice of local weights $H(f_1, \ldots f_n)$ which produces this series. Write the residue of $Z(\x)$ as a product:
\begin{equation} \label{formalprod}
R(\x_S)=\prod_{k=1}^{\infty} (1-q^{\mu(k)}\prod_{i \in S} x_i^{\nu_i(k)})^{-\lambda(k)}
\end{equation}
and use this to define the Euler factor
\begin{equation}
\sum_{\substack{(a_i)_{i\in S} \\ a_i \geq 0}} H(p^{a_1'}, \ldots p^{a_{n+1}'}) \prod_{i \in S} (q^{-N(i)/2} x_i)^{a_i \deg p}=\prod_{k=1}^{\infty} (1-q^{(\mu(k)-1)\deg p}\prod_{i \in S} x_i^{\nu_i(k)\deg p})^{-\lambda}.
\end{equation}
This series determines $Z_p(\x)=\sum\limits_{a_1, \ldots a_{n+1}} H(p^{a_1}, \ldots p^{a_{n+1}}) x_1^{a_1\deg p}\cdots x_{n+1}^{a_{n+1}\deg p}$ by the local functional equations just as $R(\x_S)$ determines $Z(\x)$. Then $H(f_1, \ldots f_{n+1})$ can be evaluated for all $f_i \in \mathbb{F}_q[t]$ by Axiom (\ref{twistedmult}). By construction, we have 
\begin{equation}
\sum_{f_1, \ldots f_{n+1}} H(f_1, \ldots f_{n+1}) x_1^{\deg f_1} \cdots x_{n+1}^{\deg f_{n+1}} = \sum_{a_1, \ldots a_{n+1}} c_{a_1, \ldots a_{n+1}}(q) x_1^{a_1} \cdots x_{n+1}^{a_{n+1}}.
\end{equation}
Axiom (\ref{twistedmult}) will be satisfied, but Axioms (\ref{localglobal}) and (\ref{dominance}) may not. In fact, Axiom (\ref{localglobal}) is equivalent to Property (\ref{symmetry}). This is because Proposition (\ref{residueformula1}), which uses only the functional equations, can be applied to $R(\x)$, and if the residue coefficients satisfy the local-global property, then all coefficients do. 

\section{Proof of the Main Theorem}

Since $R(\x_S)$ is determined by functional equations up to a power series in the variable $\x_S^{\alpha_0|_S}$, it is natural to factor it into diagonal and off-diagonal terms. Write $R(\x_S)$ as a product of factors $(1-q^{\mu}\prod\limits_{i \in S} x_i^{\nu_i})^{-\lambda}$, and let $R(\x_S)= R_0(\x_S) R_1(\x_S)$, where $R_1(\x_S)$ collects the factors where $(\nu_i)$ is a multiple of $\alpha_0|_S$, and $R_0(\x_S)$ collects the others. The off-diagonal factor $R_0(\x_S)$ is the same for every series satisfying the functional equations, but the diagonal factor $R_1(\x_S)$ may vary. In the next section, we will give an explicit formula for $R_0(\x_S)$, which satisfies Property (\ref{symmetry}).

For now, let us assume that $R_0(\x_S)$ is fixed; let $R_{0, \text{ diag}}(x)$ denote its diagonal part, with $x$ substituted for $\x_S^{\alpha_0|_S}$. As in equation (\ref{residuetodiagonal}), we have:
\begin{equation} \label{residuetodiagonal}
G(x)R_{0,\text{ diag}}(q^{\text{ht}(\alpha_0|_T)}x)^{-1}Z_{\text{diag}}(x)= R_1(q^{\text{ht}(\alpha_0|_T)}x)
\end{equation}
We will use this equation to show that there is a unique choice of $Z(\x)$ satisfying the four axioms. The series $G(x)R_{0,\text{ diag}}(q^{\text{ht}(\alpha_0|_T)}x)^{-1}$ is fixed. In order for the axioms to be satisfied, we must have:
\begin{condition}
The coefficients of $Z_{\text{diag}}(x)$ satisfy Axiom (\ref{dominance}): the coefficient of $x^a$ is divisible by $q^{1+\text{ht}(\alpha_0)/2}$. By Corollary (\ref{diagonaldominance}), this implies the Dominance Axiom for all coefficients of $Z(\x)$.
\end{condition}
\begin{condition}
$R_1(q^{\text{ht}(\alpha_0|_T)}x)$ satisfies Property (\ref{symmetry}): if $(1-q^{\mu}x^{\nu})^{-\lambda}$ is a factor, then $(1-q^{1-\mu+\nu\text{ht}(\alpha_0)}x^{\nu})^{-\lambda}$ is also a factor. This, together with the symmetry for $R_0(\x_S)$, implies Axiom (\ref{localglobal}).
\end{condition} 

Recall that $G(x)$ is essentially the diagonal part of the residue of a series with diagonal coefficients $c_{0, \ldots 0}(q)=1$, $c_{m\alpha_0}(q)=0$ for all nonzero $m$. These coefficients trivially satisfy Axiom (\ref{dominance}), so $G(x)$ satisfies a version of Dominance: its $x^a$ coefficient is a polynomial in $q$, supported in degrees between $a\text{ht}(\alpha_0|_S)/2$ and $a\text{ht}(\alpha_0)$. In fact, it is possible to prove a stronger lower bound, but for our purposes here the upper bound is sufficient. By the formulas of the next section, the $x^a$ coefficient of $R_{0,\text{ diag}}(q^{\text{ht}(\alpha_0|_T)}x)^{-1}$ is also a polynomial in $q$ of degree at most $a\text{ht}(\alpha_0)$. Thus we may write 
\begin{equation}
G(x)R_{0,\text{ diag}}(q^{\text{ht}(\alpha_0|_T)}x)^{-1}=\prod_k(1-q^{\mu(k)}x^{\nu(k)})^{-\lambda(k)}
\end{equation}
with $0<\mu(k)<\nu(k)\text{ht}(\alpha_0)$ in each factor. If we separate the product as 
\begin{equation}
\prod_{\mu(k) \leq \nu(k)\text{ht}(\alpha_0)/2} (1-q^{\mu(k)}x^{\nu(k)})^{-\lambda(k)} \prod_{\mu(k) > \nu(k)\text{ht}(\alpha_0)/2} (1-q^{\mu(k)}x^{\nu(k)})^{-\lambda(k)} 
\end{equation}
Then the only way to satisfy the two conditions above is to set
\begin{equation} 
R_1(q^{\text{ht}(\alpha_0|_T)}x)=\prod_{\mu(k) \leq \nu(k)\text{ht}(\alpha_0)/2} (1-q^{\mu(k)}x^{\nu(k)})^{-\lambda(k)}(1-q^{1-\mu(k)+\nu(k)\text{ht}(\alpha_0)}x^{\nu(k)})^{-\lambda(k)}
\end{equation}
and
\begin{equation}
Z_{\text{diag}}(x)=\prod_{\mu(k) \leq \nu(k)\text{ht}(\alpha_0)/2} (1-q^{1-\mu(k)+\nu(k)\text{ht}(\alpha_0)}x^{\nu(k)})^{-\lambda(k)} \prod_{\mu(k) > \nu(k)\text{ht}(\alpha_0)/2} (1-q^{\mu(k)}x^{\nu(k)})^{\lambda(k)}.
\end{equation}
This proves the existence and uniqueness of a series $Z(\x)$ with $R(\x_S)=R_0(\x_S)R_1(\x_S)$, satisfying the four axioms. All that remains is to make the computation of $R_0(\x_S)$. 

\chapter{Residue Formulas}\label{chap:5}
\section{The Off-Diagonal Part $R_0$ of the Residue}

In this section, we continue to study the residue $R(\x_S)$ of a multiple Dirichlet series for an affine Weyl group $W$ whose Dynkin diagram is two-colorable. We prove an explicit formula for the off-diagonal factor $R_0(\x_S)$ as a product of function field zeta functions. The proof is analogous to that of Theorem (\ref{diagonalcoeffs}), relying upon the group of functional equations satisfied by the residue, which must be studied separately for each affine type. We then give conjectural formulas for the diagonal factor  $R_1(\x_S)$ in each type. These conjectures are supported by computational evidence. In the following section, we will prove the conjecture for $\tilde{A}_n$ when $n$ is odd.

Before stating the formula for $R_0(\x_S)$, let us describe all its possible poles. Recall that the possible poles of the Chinta-Gunnells averaged series are given by the Weyl denominator 
\begin{equation}
D(\x)=\prod_{\alpha \in \Phi^+} (1-q^{\text{ht}(\alpha)+1}\x^{2 \alpha})^{-1}.
\end{equation}
Of course, some of these factors may be canceled in the numerator. $Z(\x)$ differs from this series by a power series in one variable $\x^{\alpha_0}$. Hence the possible poles of $Z(\x)$ are given by the same Weyl denominator plus poles of the form $\x^{\alpha_0}=c$.

It follows that the possible poles of $R(\x_S)$ either are of the form $\x_S^{\alpha_0|_S}=c$, or correspond to factors $(1-q^{\text{ht}(\alpha|_S)-\text{ht}(\alpha|_T)+1}\x_S^{2 \alpha|_S})^{-1}$ for $\alpha\in \Phi^+$. In the latter case we consider the orbit of $\alpha$ under the group $<\sigma_i: i \in T> = (\mathbb{Z}/2 \mathbb{Z})^{|T|}$, which leaves $\alpha|_S$ unchanged. Let us denote this orbit as $[\alpha]$ and its size as $2^{t(\alpha)}$. 

By Proposition (\ref{dist}), for $\alpha\in \Phi$, $|\sigma_i(\alpha)-\alpha|_{\infty}=0$, $1$, or $2$, and is only $2$ in the case when $\alpha = m\alpha_0 \pm e_i$. In the second case, $\alpha|_S=m\alpha_0|_S$, so these poles correspond to diagonal factors and can be ignored.

In the first case, $t(\alpha)$ is the size of the set $\lbrace i \in T: |\sigma_i(\alpha)-\alpha|_{\infty}=1\rbrace$, which is precisely the set of $i\in T$ where $\sum_{j\sim i} (\alpha)_j$ is odd. In the orbit $[\alpha]$, $\text{ht}(\alpha|_T)$ varies around the mean value $\sum_{i \in T} \sum_{j \sim i} (\alpha)_j/2 = \sum_{j \in S} N(j) (\alpha)_j /2$ by increments of $1/2$. More precisely, we have a product formula describing all possible off-diagonal poles of $R(\x_S)$:
\begin{equation}
\prod_{\substack{[\alpha] \alpha\in \Phi^+ \\ \alpha \neq m\alpha_0 \pm e_i \text{ for }i\in T}} \prod_{u=0}^{t(\alpha)} (1-q^{\sum_{j \in S} (1-N(j)/2)(\alpha)a_j -t(\alpha)/2+u+1} \x_S^{2\alpha|_S})^{-\binom{t(\alpha)}{u}}.
\end{equation}
Notice that this product does not satisfy Property (\ref{symmetry}), though it does have a similar symmetry. We will see that many of the possible poles of the residue are canceled by zeroes in the numerator. The correct formula is as follows:

\begin{prop} \label{R0}
We have
\begin{align} \label{R0formula}
R_0(\x_S) &= \prod_{[\alpha], \, \alpha\in \Phi^+, \, t(\alpha)=0} (1-q^{(\sum_{j \in S} (1-N(j)/2)(\alpha)_j+1)/2} \x_S^{\alpha|_S})^{-1} \nonumber \\
&\prod_{\substack{[\alpha], \, \alpha\in \Phi^+, \, t(\alpha)>0 \\ \alpha \neq m\alpha_0 \pm e_i \text{ for }i\in T}} \prod_{u=0}^{t(\alpha)-1} (1-q^{\sum_{j \in S} (1-N(j)/2)(\alpha)_j -t(\alpha)/2+u+1} \x_S^{2\alpha|_S})^{-\binom{t(\alpha)-1}{u}}.
\end{align}
The residue of any power series $Z(\x)$ satisfying functional equations (\ref{oddglobalfe}) and (\ref{evenglobalfe}) is $R_0(\x_S)$ multiplied by a series in one variable $\x_S^{\alpha_0|_S}$. 
\end{prop}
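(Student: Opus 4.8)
I would establish the two assertions in the reverse order, since the second makes the first well posed. For the second assertion: if $Z(\x)$ and $Z'(\x)$ both satisfy the functional equations (\ref{oddglobalfe})--(\ref{evenglobalfe}), Corollary (\ref{onevariable}) gives $Z/Z'=F(\x^{\alpha_0})$ for a one-variable power series $F$. Applying $(-q)^{|T|}\text{Res}_{\x_T=(q^{-1},\dots,q^{-1})}$ — which by Proposition (\ref{axiomsimplyfes}) is computed by finite sums of coefficients and so is legitimate on formal power series — yields $R/R'=F(q^{-\text{ht}(\alpha_0|_T)}\x_S^{\alpha_0|_S})$. Writing each residue via its Euler product as a formal product of factors $(1-q^{\mu}\x_S^{\nu})^{-\lambda}$, and noting that multiplication by a series in $\x_S^{\alpha_0|_S}$ only creates or cancels factors whose exponent vector $\nu$ is a multiple of $\alpha_0|_S$, one concludes that the part $R_0$ collecting the other factors is unchanged. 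Thus $R_0$ is intrinsic to the functional equations, and it suffices to compute it for one $Z$; I would take $Z=Z_{\text{avg}}$ from Proposition (\ref{averaging}), whose poles are governed by the explicit Weyl denominator $D(\x)$.

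To compute $R_0$ I would first identify the group of functional equations satisfied by $R(\x_S)$. By (\ref{residuefe}), $R(\x_S)$ is, up to $(1-q)^{|T|}$, the even part of $Z$ specialized at $\x_T=(1,\dots,1)$ with the $S$-variables rescaled by $q^{-N(i)/2}$. The symmetries of the residue are the elements of $W$ that descend to well-defined birational maps of this specialization: each $\sigma_i$ with $i\in S$, ``dressed'' by the reflections $\sigma_j$, $j\in T$, needed to restore the $T$-coordinates to the residue locus. These generate a Coxeter group $W_S$ acting on $\x_S$, and $R(\x_S)$ satisfies $W_S$-functional equations of the same odd/even shape as (\ref{oddglobalfe})--(\ref{evenglobalfe}), with $q$-exponents twisted by the rescaling — precisely the twist recorded in Property (\ref{symmetry}). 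Determining $W_S$ and its action explicitly is the step that must be done separately in each affine type; for $\tilde A_n$ with $n$ odd, $W_S$ is the infinite dihedral group, consistent with the accumulating product in (\ref{A3residue}).

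With $W_S$ in hand, $R_0$ is determined just as in the proof of Proposition (\ref{diagonalcoeffs}): the $W_S$-functional equations reduce every off-diagonal coefficient of $R$ to coefficients of strictly smaller total degree, so $R_0$ can be read off from the residue of $Z_{\text{avg}}$. Its polar part is $D(\x)$ specialized at $\x_T=(q^{-1},\dots,q^{-1})$, which, after grouping $\Phi^+$ into $\langle\sigma_i:i\in T\rangle$-orbits $[\alpha]$, is exactly the ``possible poles'' product displayed before the Proposition, carrying $\prod_{u=0}^{t(\alpha)}(\cdots)^{\binom{t(\alpha)}{u}}$ on each orbit. One then shows that within each orbit with $t(\alpha)>0$ exactly one factor is cancelled by a numerator zero — a $W_S$-translate of a trivial zero $x_i=\pm q^{1/2}$ of $D(\x)\Delta(\x)Z_{\text{avg}}(\x)$, in the sense of the final step of the proof of Proposition (\ref{averaging}) — so that the orbit product collapses to $\prod_{u=0}^{t(\alpha)-1}(\cdots)^{\binom{t(\alpha)-1}{u}}$; the identity $\binom{t}{u}=\binom{t-1}{u}+\binom{t-1}{u-1}$ is exactly what makes the surviving exponents in (\ref{R0formula}) come out right. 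Proposition (\ref{dist}), which bounds $|\sigma_i(\alpha)-\alpha|_\infty$, isolates the exceptional orbits $\alpha=m\alpha_0\pm e_i$, whose poles are of diagonal type $\x_S^{\alpha_0|_S}=c$ and belong to $R_1$ rather than $R_0$.

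The main obstacle is the type-by-type identification of $W_S$ and its action on $\x_S$, together with the verification that the numerator of $Z_{\text{avg}}$ — equivalently, of $D(\x)\Delta(\x)Z_{\text{avg}}(\x)$ — vanishes along each specialized hyperplane to exactly the order that turns the $\binom{t(\alpha)}{u}$ exponents into $\binom{t(\alpha)-1}{u}$ exponents. This cancellation count has no uniform closed form and must be handled case by case, as the surrounding text anticipates.
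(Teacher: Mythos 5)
Your overall frame—reduce to the residue of one concrete series, then identify a group of functional equations for $R(\x_S)$ coming from elements of $W$ that preserve the residue locus, and use them to pin down the off-diagonal part—is in the right spirit, and your treatment of the second assertion via Corollary (\ref{onevariable}) is fine. But the step on which your computation of $R_0$ actually rests has a genuine gap. You propose to read $R_0$ off the residue of $Z_{\text{avg}}$ by starting from the specialized Weyl denominator (the ``possible poles'' product with exponents $\binom{t(\alpha)}{u}$) and then arguing that numerator zeroes of $D(\x)\Delta(\x)Z_{\text{avg}}(\x)$ cancel enough factors to leave $\binom{t(\alpha)-1}{u}$. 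First, the count is off: passing from $\prod_{u=0}^{t}(\cdots)^{-\binom{t}{u}}$ to $\prod_{u=0}^{t-1}(\cdots)^{-\binom{t-1}{u}}$ requires cancelling $\binom{t-1}{u-1}$ copies at each $u$, i.e.\ $2^{t(\alpha)-1}$ factors per orbit, not ``exactly one.'' Second, and more seriously, there is no available control of these cancellations: the vanishing established in the proof of Proposition (\ref{averaging}) is of $D\Delta Z_{\text{avg}}$ along the zero set of $\Delta$ (at $\x^{\alpha}=\pm q^{-\mathrm{ht}(\alpha)/2}$), which only shows $DZ_{\text{avg}}$ is holomorphic; it says nothing about where the numerator of $DZ_{\text{avg}}$ vanishes on the specialized polar divisors of $D$, which is what your cancellation count needs. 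The paper itself points out that the zeroes of $Z_{\text{avg}}$ are not understood well enough to extract such information, so within your scheme the decisive step is exactly the part you defer to ``case by case'' with no mechanism to carry it out.

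The paper's proof sidesteps the residue of $Z_{\text{avg}}$ entirely. It derives, for each affine type, explicit functional equations satisfied by \emph{any} residue $R(\x_S)$: these come from specific words in $W$ normalizing $\langle\sigma_i : i\in T\rangle$ (e.g.\ $\sigma_i\sigma_{i-1}\sigma_{i+1}\sigma_i$ when $N(i)=2$, longer words when $N(i)=3$, plus supplemental equations in several $\tilde D$ and $\tilde E$ configurations), with \emph{scalar} cocycles computed directly from the Chinta--Gunnells action—note that after taking the residue only the even parts survive, so the residue functional equations do not retain the odd/even shape you describe, and in type $\tilde A_n$, $n$ odd, the resulting group is the Weyl group of $\tilde A_{(n-1)/2}$, not the infinite dihedral group except when $n=3$. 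It then shows by coefficient-reduction (type by type) that these equations determine $R$ up to a series in $\x_S^{\alpha_0|_S}$, and verifies that the explicit candidate (\ref{R0formula}) satisfies each functional equation by pure root-orbit bookkeeping: the transformation permutes the orbits $[\alpha]$, and the finitely many positive orbits sent to negative orbits contribute exactly the computed cocycle. Existence of some residue, guaranteed abstractly by Proposition (\ref{averaging}), ensures the system of functional equations is consistent. So the candidate formula is \emph{checked} against functional equations rather than \emph{extracted} from $Z_{\text{avg}}$, and no knowledge of numerator vanishing orders is ever required; to repair your proposal you would need either that missing vanishing analysis or to switch to this verification strategy.
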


Before proving this proposition, a few comments: first, $R_0(\x_S)$ has half of its possible poles in any given orbit $[\alpha]$, and it does satisfy the symmetry (\ref{symmetry}). It can be directly verified that the $\x_S^{a\alpha_0|_S}$ coefficient of $R_0(q^{\text{ht}(\alpha_0|_T)}\x_S)$ is a polynomial in $q$ of degree at most $a \text{ht}(\alpha_0)$ as asserted in the previous section. We may extend $R_0(\x_S)$ to a meromorphic function for $|\x_S^{\alpha_0|_S}|<q^{(\text{ht}(\alpha_0|_T)-\text{ht}(\alpha_0|_S))/2}$, which is compatible with the maximal domain of meromorphic continuation for $Z(\x)$. The proof or meromorphicity is the same as the proof for $D(\x)$ in Section (\ref{chap:3}). Finally, note that the formula (\ref{R0formula}) may include some diagonal factors; this is purely a matter of notational convenience, and the statement remains true if these factors are removed, but we will assume they are included.

\begin{proof}
The strategy of the proof is to give a group of functional equations which the residue must satisfy, and which determine it up to a power series in one variable. We can then check that the residue formula (\ref{R0formula}) satisfies these functional equations. The proof also implicitly uses Proposition (\ref{averaging}) to assert that some residue of a series satisfying the functional equations exists, which must then match $R_0(\x_S)$ multiplied by a diagonal series. There is not, for example, an extra functional equation with no solutions. 

The functional equations of the residue all correspond to elements $w$ of the normalizer of $<\sigma_i: i \in T>$ in $W$. Therefore, they permute the orbits $[\alpha]$ of this group, and map orbits of the form $[m\alpha_0 +e_i]$ for $i\in T$ to orbits of the same form. All but finitely many orbits of positive roots map to orbits of positive roots. From this we will see that $R_0(\x_S)$ has a $w$ functional equation with a scalar cocycle which is a product over orbits $[\alpha]$ for $\alpha\in\Phi(w)$. We must directly compute the cocycle given by the functional equation for $w$ acting on an arbitrary residue $R(\x_S)$, and verify that it matches the cocycle on $R_0(\x_S)$.

The group of functional equations is type-dependent, so the proof has several cases, involving long, repetitive calculations. We will only give complete details in the first few cases. The most complicated formulas were verified in Mathematica.

The simplest examples of functional equations satisfied by the residue come from vertices $i\in S$ with $N(i)=2$. We will label the adjacent vertices are $i-1$ and $i+1$. Then the functional equation of $R(\x_S)$ is derived from $\sigma_i\sigma_{i-1}\sigma_{i+1}\sigma_i$.
  
In the notation of Proposition (\ref{averaging}), we have 
\begin{align}
Z(\x)&=(Z|\sigma_i\sigma_{i-1}\sigma_{i+1}\sigma_i)(\x)  \nonumber \\
&=\frac{1}{16}\sum_{\delta_1, \delta_2, \delta_3, \delta_4 \in \lbrace 0, 1 \rbrace} (-1)^{\delta_2+\delta_3} q^{-2} x_i^{-4} x_{i-1}^{-2} x_{i+1}^{-2} \left(\frac{x_i-1}{1-qx_i}+(-1)^{\delta_4}q^{-1/2}\right) \nonumber \\
&\left(\frac{(-1)^{\delta_4}q^{1/2}x_{i-1}x_i-1}{1-(-1)^{\delta_4}q^{3/2}x_{i-1}x_i}+(-1)^{\delta_3}q^{-1/2}\right) \left(\frac{(-1)^{\delta_4}q^{1/2}x_i x_{i+1}-1}{1-(-1)^{\delta_4}q^{3/2} x_i x_{i+1}}+(-1)^{\delta_2}q^{-1/2}\right) \nonumber \\ 
& \left(\frac{(-1)^{\delta_2+\delta_3}q x_{i-1} x_i x_{i+1}-1}{1-(-1)^{\delta_2+\delta_3}q^2 x_{i-1}x_i x_{i+1}}+(-1)^{\delta_1}q^{-1/2}\right) Z(\epsilon_i^{\delta_1} \sigma_i \epsilon_{i-1}^{\delta_2} \sigma_{i-1} \epsilon_{i+1}^{\delta_3} \sigma_{i+1} \epsilon_i^{\delta_4} \sigma_i(\x))
\end{align}
where
\begin{equation}
(\epsilon_i^{\delta_1} \sigma_i \epsilon_{i-1}^{\delta_2} \sigma_{i-1} \epsilon_{i+1}^{\delta_3} \sigma_{i+1} \epsilon_i^{\delta_4} \sigma_i(\x))_j= \left\lbrace \begin{array}{cc} (-1)^{\delta_2+\delta_3}q^{-2} x_{i-1}^{-1} x_i^{-1} x_{i+1}^{-1} & j=i \\ (-1)^{\delta_1+\delta_2+\delta_3+\delta_4} x_{i \mp 1} & j=i \pm 1 \\ (-1)^{\delta_2} q x_i x_{i-1} x_j & j\sim i-1, j\neq i \\ (-1)^{\delta_3} q x_i x_{i+1} x_j & j\sim i+1, j\neq i \\ x_j & \text{otherwise} \end{array} \right. .
\end{equation} 
If we now take the residue, only terms with $(-1)^{\delta_1+\delta_2+\delta_3+\delta_4}=1$ will contribute at all, and by Proposition (\ref{residueformula1}), all powers of $-1$ in $Z$ will cancel out. We define the resulting transformation as $\tau_i(\x_S)$, given by
\begin{equation}
(\tau_i(\x_S))_j=\left\lbrace \begin{array}{cc} x_j^{-1} & j=i \\ x_i x_j & j\sim\sim i \\ x_j & \text{otherwise} \end{array} \right.
\end{equation}
where $\sim\sim$ denotes vertices of distance two apart in the Dynkin diagram. The result is that $R(\x_S)$ has a functional equation with scalar cocycle:
\begin{equation} \label{2neighborfe}
R(\x_S)=(*)R(\tau_i(\x_S))
\end{equation}
where 
\begin{align}
(*)&=\frac{1}{16}\sum_{\delta_2, \delta_3, \delta_4 \in \lbrace 0, 1 \rbrace} (-1)^{\delta_2+\delta_3} q^{-6} x_i^{-4}
\left(\frac{x_i-1}{1-qx_i}+(-1)^{\delta_4}q^{-1/2}\right) \nonumber \\
& \left(\frac{(-1)^{\delta_4}q^{-1/2}x_i-1}{1-(-1)^{\delta_4}q^{1/2}x_i}+(-1)^{\delta_3}q^{-1/2}\right)
\left(\frac{(-1)^{\delta_4}q^{-1/2}x_i-1}{1-(-1)^{\delta_4}q^{1/2} x_i}+(-1)^{\delta_2}q^{-1/2}\right) \nonumber \\
&\left(\frac{(-1)^{\delta_2+\delta_3}q^{-1}x_i-1}{1-(-1)^{\delta_2+\delta_3}x_i}+(-1)^{\delta_2+\delta_3+\delta_4}q^{-1/2}\right) \nonumber \\
&=\frac{(1-x_i^{-2})(1-qx_i^{-2})}{(1-x_i^2)(1-qx_i^2)}.
\end{align}

We now check that $R_0(\x_S)$ as defined in formula (\ref{R0formula}) satisfies this functional equation. Because $\sigma_i\sigma_{i-1}\sigma_{i+1}\sigma_i$ lies in the normalizer of $<\sigma_i: i \in T>$, it permutes the orbits $[\alpha]$, mapping each $[\alpha]$ to $\tau_i[\alpha]:=[\sigma_i\sigma_{i-1}\sigma_{i+1}\sigma_i \alpha]$, with $t(\alpha)=t(\sigma_i\sigma_{i-1}\sigma_{i+1}\sigma_i \alpha)$. Orbits $[m\alpha_0 +e_i]$ for $i\in T$ map to orbits of the same form. Furthermore, for each $[\alpha]$ with $t(\alpha)=0$, we have 
\begin{equation}
(1-q^{(\sum_{j \in S} (1-N(j)/2)(\alpha)_j+1)/2} \tau_i(\x_S)^{\alpha|_S})^{-1}=(1-q^{(\sum_{j \in S} (1-N(j)/2)(\tau_i(\alpha))_j+1)/2} \x_S^{\tau_i(\alpha)|_S})^{-1}
\end{equation}
and for each $[\alpha]$ with $t(\alpha)>0$, we have
\begin{align}
&\prod_{u=0}^{t(\alpha)-1} (1-q^{\sum_{j \in S} (1-N(j)/2)(\alpha)_j -t(\alpha)/2+u+1} \tau_i(\x_S)^{2\alpha|_S})^{-\binom{t(\alpha)-1}{u}} \nonumber \\ &=\prod_{u=0}^{t(\tau_i(\alpha))-1} (1-q^{\sum_{j \in S} (1-N(j)/2)(\tau_i(\alpha))_j -t(\tau_i(\alpha))/2+u+1} \x_S^{2\tau_i(\alpha)|_S})^{-\binom{t(\alpha)-1}{u}}
\end{align}
so $\tau_i$ permutes the factors of $R_0(\x_S)$. The only exception is the orbit $[e_i]$, with $t(e_i)=2$, which maps to $[-e_i]$. No other positive orbit appearing in $R_0(\x_S)$ becomes negative under $\tau_i$. To account for the factors gained and lost from this orbit, we have 
\begin{equation}
R_0(\x_S)=\frac{(1-x_i^{-2})(1-qx_i^{-2})}{(1-x_i^2)(1-qx_i^2)} R_0(\tau_i(\x_S))
\end{equation}
which indeed matches the functional equation (\ref{2neighborfe}) above. 

With this functional equation in hand, we can prove the proposition in type $\tilde{A}_n$, $n$ odd. Let us label the vertices of the Dynkin diagram from $1$ to $n+1$ modulo $n+1$, as shown,

\includegraphics[scale=.5]{A_n}

\noindent and without loss of generality let $S$ be the set of odd-numbered vertices, and $T$ the set of even-numbered vertices. For each odd $i$, we have a functional equation $\tau_i$ as above. These generate a group of symmetries isomorphic to the Weyl group of $\tilde{A}_{(n-1)/2}$.

Suppose $R(\x_S)$ and $R'(\x_S)$ are two residues satisfying these functional equations. Then because the cocycle is scalar, the ratio of the two residues is invariant under all the transformations $\tau_i$. If we write 
\begin{equation}
\frac{R(\x_S)}{R'(\x_S)}=\sum_{a_1, a_3, \ldots a_n \geq 0} d_{a_1, a_3, \ldots a_n} x_1^{a_1}x_3^{a_3}\cdots x_n^{a_n}
\end{equation}
then the $\tau_i$ functional equation yields a coefficient relation 
\begin{equation}
d_{\ldots a_{i-2}, a_i, a_{i+2}, \ldots}=d_{\ldots a_{i-2}, a_{i-2}+a_{i+2}-a_i, a_{i+2}, \ldots}
\end{equation}
Any non-diagonal coefficient $d_{a_1, a_3, \ldots a_n}$ can be reduced repeatedly by such relations, so it must be $0$. Hence the ratio $\frac{R(\x_S)}{R'(\x_S)}$ is a power series in the variable $x_1 x_3\cdots x_n$.

For $n=3$, the the argument must be modified slightly. Here $\tau_i$ is the transformation:
\begin{equation}
(\tau_i(\x_S))_j=\left\lbrace \begin{array}{cc} x_j^{-1} & j=i \\ x_i^2 x_j & j \neq i \end{array} \right.
\end{equation}
which induces the coefficient relations $d_{a_1, a_3}=d_{2a_3-a_1, a_3}=d_{a_1, 2a_1-a_3}$. The rest of the proof is similar.

In the remaining types, the residue $R(\x_S)$ will have a somewhat more complicated functional equation for each vertex $i\in S$ with $N(i)=3$. If we label the adjacent vertices $i-1$, $i+1$, and $i+2$ then this functional equation is induced from the Weyl group element given by $\sigma_i \sigma_{i-1} \sigma_{i+1} \sigma_{i+2} \sigma_{i} \sigma_{i-1} \sigma_{i+1} \sigma_{i+2} \sigma_{i}$. After a computation similar to the one above, we find a transformation $\tau_i(\x_S)$ given by 
\begin{equation}
(\tau_i(\x_S))_j=\left\lbrace \begin{array}{cc} x_j^{-1} & j=i \\ x_i^2 x_j & j\sim\sim i \\ x_j & \text{otherwise} \end{array} \right.
\end{equation}
and a functional equation as follows:
\begin{equation} \label{3neighborfe}
R(\x_S)=\frac{(1-x_i^{-2})(1-qx_i^{-2})^3(1-q^2x_i^{-2})}{(1-q^{-1}x_i^2)(1-x_i^2)^3(1-qx_i^2)} R(\tau_i(\x_S)).
\end{equation}
The cocycle is originally a sum of $256$ terms, but there are many cancellations.

To verify that $R_0(\x_S)$ satisfies this functional equation, we use the fact that the transformation $\sigma_i \sigma_{i-1} \sigma_{i+1} \sigma_{i+2} \sigma_{i} \sigma_{i-1} \sigma_{i+1} \sigma_{i+2} \sigma_{i}$ normalizes $<\sigma_i: i \in T>$, and hence permutes all but finitely many factors of $R_0(\x_S)$. The only orbits $[\alpha]$ of positive roots which map to orbits of negative roots are $[e_i]$ with $t(e_i)=3$ and $[2e_i+e_{i-1}+e_{i+1}+e_{i+2}]$ with $t(2e_i+e_{i-1}+e_{i+1}+e_{i+2})=0$. We see that $[e_i]$ contributes $\frac{(1-x_i^{-2})(1-qx_i^{-2})^2(1-q^2x_i^{-2})}{(1-q^{-1}x_i^2)(1-x_i^2)^2(1-qx_i^2)}$ to the cocycle, and $[2e_i+e_{i-1}+e_{i+1}+e_{i+2}]$ contributes $\frac{1-qx_i^{-2}}{1-x_i^2}$, so equation (\ref{3neighborfe}) is satisfied.

We now prove the proposition for the case of $\tilde{D}_n$. We label the vertices of the Dynkin diagram as follows:

\includegraphics[scale=.5]{D_n}

\noindent Suppose $n$ is even and $S=\lbrace 3, 5, \ldots n-1 \rbrace$. Let $d_{a_3, a_5, \ldots a_{n-1}}$ denote a coefficient of the ratio of two residues. The $\tau_i$ invariance yields relations $d_{\ldots a_{i-2}, a_i, a_{i+2}, \ldots}=d_{\ldots a_{i-2}, a_{i-2}+a_{i+2}-a_i, a_{i+2}, \ldots}$, $d_{a_3, a_5, \ldots}=d_{2a_5-a_3, a_5, \ldots}$, and $d_{\ldots a_{n-3}, a_{n-1}}=d_{\ldots a_{n-3}, 2a_{n-3}-a_{n-1}}$. If $a_{n-1}>a_{n-3}$, $a_3>a_5$, or any other $a_i>(a_{i-2}+a_{i+2})/2$, these relations allow the coefficient $d_{a_3, a_5, \ldots a_{n-1}}$ to be reduced repeatedly, until it is $0$. Hence the ratio must be a diagonal series in the variable $x_3 x_5 \ldots x_{n-1}$. We have already shown that $R_0(\x_S)$ satisfies the correct $\tau_i$ functional equations, so the proposition is verified.

In the remaining cases of type $\tilde{D}$, we require an additional functional equation, which will be denoted $\tau_{1, 2}$ or $\tau_{n, n+1}$. The transformation $\tau_{1,2}$ derives from $\sigma_1 \sigma_2 \sigma_3 \sigma_1 \sigma_2$, and $\tau_{n,n+1}$ derives from $\sigma_n \sigma_{n+1} \sigma_{n-1} \sigma_n \sigma_{n+1}$. We will describe the functional equation for $\tau_{1,2}$ only, because $\tau_{n,n+1}$ is similar. Assume that $1, 2 \in S$. As in the cases above, we compute 
\begin{equation}
(\tau_{1, 2}(\x_S))_j=\left\lbrace \begin{array}{cc} x_2^{-1} & j=1 \\ x_1^{-1} & j=2 \\ x_1 x_2 x_4 & j=4 \\ x_j & \text{otherwise} \end{array} \right.
\end{equation}
and the functional equation 
\begin{equation}
R(\x_S)=\frac{(1-x_1^{-2})(1-x_2^{-2})(1-x_1^{-1}x_2^{-1})}{(1-qx_1^2)(1-qx_2^2)^3(1-qx_1x_2)} R(\tau_{1,2}(\x_S)).
\end{equation}
We see that $R_0(\x_S)$ satisfies this functional equation because $\sigma_1 \sigma_2 \sigma_3 \sigma_1 \sigma_2$ normalizes $<\sigma_i: i \in T>$, and permutes the orbits $[\alpha]$ of positive roots, except for $[e_1]$ and $[e_2]$, and $[e_1+e_2+e_3]$, which map to $[-e_2]$, $[-e_1]$, and $[-e_1-e_2-e_3]$ respectively.

For $\tilde{D}_n$ with $n$ even and $S=\lbrace 1, 2, 4, 6, \ldots n-2, n, n+1 \rbrace$, the functional equations $\tau_{1, 2}$, $\tau_4$, $\tau_6$,... $\tau_{n-2}$, $\tau_{n, n+1}$ are not quite sufficient to determine the residue up to a diagonal series. If 
$d_{a_1, a_2, a_4, \ldots a_{n-2}, a_n, a_{n+1}}$ is a nonzero coefficient of the ratio of two residues, then the functional equations give relations which imply that $a_1+a_2=a_4=a_6=\cdots=a_n-2=a_n+a_n+1$, but not that $a_1=a_2=a_n=a_n+1$. We require supplemental functional equations, valid in this case only. 

For $i\in \lbrace 1, 2 \rbrace$ and $j \in \lbrace n, n+1 \rbrace$, the extra functional equation corresponds to 
\begin{align}
(\sigma_i \sigma_3 \sigma_4 \cdots \sigma_{n/2})(\sigma_j \sigma_{n-1} \sigma_{n-2} \cdots \sigma_{2+n/2}) \nonumber \\
(\sigma_{1+ n/2}) \nonumber \\
(\sigma_{2+n/2} \sigma_{3+n/2} \cdots \sigma_{n-1}\sigma_j) (\sigma_{n/2} \sigma_{n/2-1} \cdots \sigma_3 \sigma_i).  
\end{align}
It induces a transformation $\tau_{i,j}(\x_S)$ given by 
\begin{equation}
(\tau_{i,j}(\x_S))_k=\left\lbrace \begin{array}{cc} x_4^{-1}x_6^{-1}\cdots x_{n-2}^{-1}x_j^{-1} & k=i \\ x_i^{-1}x_4^{-1}x_6^{-1}\cdots x_{n-2}^{-1} & k=j \\ x_1 x_2 x_4 x_6 \cdots x_{n-2} x_j & k \in \lbrace 1, 2 \rbrace, k \neq i \\ x_4 x_6 \cdots x_{n-2} x_n x_{n+1} & k \in \lbrace n, n+1 \rbrace, k \neq j \\ x_k & \text{otherwise} \end{array} \right. .
\end{equation}
The functional equation is 
\begin{equation}
R(\x_S)=(*)R(\tau_i(\x_S))
\end{equation}
with the scalar cocycle
\begin{align}
(*)=&\left(\frac{1-x_i^{-2}}{1-qx_i^2}\right)\left(\frac{1-x_j^{-2}}{1-qx_j^2}\right)\left( \frac{1-(x_i x_4 x_6 \cdots x_{n-2} x_j)^{-1}}{1-qx_i x_4 x_6 \cdots x_{n-2} x_j}\right) \nonumber \\ &\prod\limits_{\substack{4\leq k \leq n-2 \\ k \text{ even}}} \left(\frac{1-(x_i x_4 x_6 \cdots x_k)^{-2}}{1-q(x_i x_4 x_6 \cdots x_k)^2}\right)\left( \frac{1-(x_k x_{k+2} \cdots x_{n-2} x_j)^{-2}}{1-q(x_k x_{k+2} \cdots x_{n-2} x_j)^2}\right).
\end{align}
$R_0(\x_S)$ satisfies this functional equation because, as always, the underlying transformation normalizes $<\sigma_i: i \in T>$, and permutes the orbits $[\alpha]$. The orbits of positive roots which map to negative roots are: $[e_i], [e_i+e_3+e_4], \ldots [e_i+e_3+\cdots+e_{n-2}], [e_j], [e_j+e_{n-1}+e_{n-2}], \ldots [e_j+e_{n-1}+\cdots+e_4], [e_i+e_3+\cdots +e_{n-1}+e_j]$, and their images are $[-e_j-e_{n-1}-\cdots-e_4], [-e_j-e_{n-1}-\cdots-e_6], \ldots [-e_j], [-e_i-e_3-\cdots-e_{n-2}], [-e_i-e_3-\cdots-e_{n-4}],\ldots [-e_i], [-e_i-e_3-\cdots -e_{n-1}-e_j]$ respectively.

The extra functional equation $\tau_{1, n}$ gives rise to a coefficient relation which allows us to reduce $d_{a_1, a_2, a_4, \ldots a_{n-2}, a_n, a_{n+1}}$ if $a_1+a_n>a_2+a_{n+1}$, and the other $\tau_{i, j}$ have similar results. The full group of functional equations determines the residue up to a series in the variable $x_1 x_2 x_4^2 x_6^2\cdots x_{n-2}^2 x_n x_{n+1}$.

For $\tilde{D}_n$ with $n$ odd, we may take $S=\lbrace 1, 2, 4, 6, \ldots n-1 \rbrace$ without loss of generality. The functional equations $\tau_{1, 2}$, $\tau_4$, $\tau_6$, ... $\tau_{n-2}$ imply that $a_1+a_2=a_4=a_6=\cdots=a_{n-1}$ for any nonzero coefficient $d_{a_1, a_2, a_4, a_6, \ldots a_{n-1}}$ in the ratio of two residues. Again, we require extra functional equations, valid only in this case, to show that $a_1=a_2$.

For $i\in\lbrace 1, 2 \rbrace$, a functional equation comes from $\sigma_i\sigma_3\cdots\sigma_{n-1}\sigma_n\sigma_{n+1}\sigma_{n-1}\cdots \sigma_3\sigma_i$. We set 
\begin{equation}
(\tau_i(\x_S))_j=\left\lbrace \begin{array}{cc} x_i^{-1}x_4^{-2}x_6^{-2}\cdots x_{n-2}^{-2} & j=i \\ x_i^2 x_j x_4^2 x_6^2 \cdots x_{n-2}^2 & j \in \lbrace 1, 2 \rbrace, j \neq i \\ x_j & \text{otherwise} \end{array} \right.
\end{equation}
and compute the functional equation:
\begin{equation}
R(\x_S)=(*)R(\tau_i(\x_S))
\end{equation}
where
\begin{align}
(*)=&\left(\frac{1-x_i^{-2}}{1-qx_i^2}\right) \left(\frac{1-qx_i^{-2} x_4^{-2} x_6^{-2}\cdots x_{n-1}^{-2}}{1-x_i^2x_4^2x_6^2\cdots_{n-1}^2}\right) \nonumber \\ &\prod_{\substack{4\leq k\leq n-1 \\ k \text{ even}}}\left(\frac{1-x_i^{-2}x_4^{-2}x_6^{-2}\cdots x_k^{-2}}{1-qx_i^2x_4^2x_6^2\cdots x_k^2}\right)\left(\frac{1-qx_i^{-2} x_4^{-2} x_6^{-2} \cdots x_{k-2}^{-2} x_k^{-4} x_{k+2}^{-4}\cdots x_{n-1}^{-4}}{1-x_i^2 x_4^2 x_6^2 \cdots x_{k-2}^2 x_k^{4} x_{k+2}^{4}\cdots x_{n-1}^{4}}\right). 
\end{align}
The orbits of positive roots which map to negative roots under this transformation are $[e_i], [e_i+e_3+e_4], \ldots [e_i+e_3+\cdots+e_{n-1}], [e_i+e_4+\cdots+e_{n-2}+2e_{n-1}+e_n+e_{n+1}], \ldots [e_i+2e_3+\cdots+2e_{n-1}+e_n+e_{n+1}]$, all of which have $t=1$ except for $[e_i+e_3+\cdots+e_{n-1}]$ with $t=2$. Hence $R_0(\x_S)$ satisfies this functional equation.

If, for example, $a_1>a_2$, then the $\tau_1$ relation reduces $d_{a_1, a_2, a_4, a_6, \ldots a_{n-1}}$. It follows that $R(\x_S)$ is determined up to a diagonal series.

For type $\tilde{E}_6$, we label the vertices of the Dynkin diagram:

\includegraphics[scale=.5]{E_6}

\noindent Let $S=\lbrace 2, 4, 6 \rbrace$. Then the $\tau_2, \tau_4, \tau_6$ functional equations of (\ref{2neighborfe}) generate a group isomorphic to the Weyl group $\tilde{A}_2$. If $d_{a_2, a_4, a_6}$ is a coefficient in the ratio of two residues, we have 
\begin{equation}
d_{a_2, a_4, a_6}=d_{a_4+a_6-a_2, a_4, a_6}=d_{a_2, a_2+a_6-a_4, a_6}=d_{a_2, a_4, a_2+a_4-a_6}
\end{equation}
and so any coefficient without $a_2=a_4=a_6$ can be reduced to $0$. Hence the functional equations determine the residue up to a diagonal series.

On the other hand, if $S= \lbrace 1, 3, 5, 7 \rbrace$, then we need supplemental functional equations. We will define transformations $\tau_1, \tau_3, \tau_5$ corresponding to $\sigma_1\sigma_2\sigma_7\sigma_4\sigma_6\sigma_7\sigma_2\sigma_1$,  $\sigma_3\sigma_4\sigma_7\sigma_2\sigma_6\sigma_7\sigma_4\sigma_3$,  $\sigma_5\sigma_6\sigma_7\sigma_2\sigma_4\sigma_7\sigma_6\sigma_5$ respectively, given by
\begin{equation}
(\tau_i(\x_S))_j=\left\lbrace \begin{array}{cc} x_i^{-1}x_7^{-2} & j=i \\ x_j x_i x_7 & j \neq i, 7 \\ x_j & j=7 \end{array} \right. .
\end{equation}
Then the functional equations are
\begin{equation}
R(\x_S)=\frac{(1-x_i^{-2})(1-x_i^{-2}x_7^{-2})(1-qx_i^{-2}x_7^{-2})(1-qx_i^{-2}x_7^{-4})}{(1-qx_i^2)(1-qx_i^2x_7^2)(1-x_i^2x_7^2)(1-x_i^2x_7^4)}R(\tau_i(\x_S)).
\end{equation}
The positive orbits which map to negative orbits are $[e_i]$, with $t=1$ $[e_i+e_{i+1}+e_7]$, with $t=2$, and $[e_i+e_2+e_4+e_6+2e_7]$, with $t=1$. From this, we see that $R_0(\x_S)$ satisfies the functional equation.

These extra functional equations, together with $\tau_7$ as in equation (\ref{3neighborfe}), determine the residue up to a series in $x_1x_3x_5 x_7^3$. The relation given by the $\tau_1$ functional equation, for example, is $d_{a_1, a_3, a_5, a_7}=d_{a_3+a_5-a_1, a_3, a_5, a_7+a_3+a_5-a_1}$, which allows any coefficient with $a_1>(a_3+a_5)/2$ to be reduced.

For type $\tilde{E}_7$, we label the Dynkin diagram:

\includegraphics[scale=.5]{E_7}

\noindent If $S=\lbrace 2, 6, 8 \rbrace$, then the functional equations $\tau_2, \tau_6$ as in (\ref{2neighborfe}) and $\tau_8$ as in (\ref{3neighborfe}) determine the residue up to a diagonal series in $x_2x_6x_8^2$. 

If $S=\lbrace 1, 3, 4, 5, 7 \rbrace$, then we need four extra functional equations. First, for $i \in \lbrace 1, 5 \rbrace$, let $\tau_{i, 4}$ correspond to $\sigma_i \sigma_{i+1} \sigma_{i+2} \sigma_4 \sigma_{8} \sigma_4 \sigma_{i+2} \sigma_{i+1} \sigma_i$. We set
\begin{equation}
(\tau_{i,4}(\x_S))_j=\left\lbrace \begin{array}{cc} x_{i+2}^{-1}x_4^{-1} & j=i \\ x_{i+2}^{-1}x_i^{-1} & j=4 \\ x_j x_i x_{i+2} x_4 & j \in \lbrace 1, 5 \rbrace, j \neq i \\ x_j & j=3, 7 \end{array} \right.
\end{equation}
and obtain the functional equation:
\begin{equation}
R(\x_S)=\frac{(1-x_i^{-2})(1-x_4^{-2})(1-x_i^{-2}x_{i+2}^{-2})(1-x_{i+2}^{-2}x_4^{-2})(1-x_i^{-1}x_{i+2}^{-1}x_4^{-1})}{(1-qx_i^2)(1-qx_4^2)(1-qx_i^2x_{i+2}^2)(1-qx_{i+2}^2x_4^2)(1-qx_i x_{i+2}x_4)}R(\tau_i(\x_S)).
\end{equation}
$R_0(\x_S)$ satisfies this functional equation. The positive orbits which map to negative orbits are $[e_i], [e_4], [e_i+e_{i+1}+e_{i+2}], [e_{i+2}+e_8+e_4], [e_i+e_{i+1}+e_{i+2}+e_8+e_4]$.

Next, we have $\tau_{1, 5}$ corresponding to $\sigma_1\sigma_2\sigma_3\sigma_5\sigma_6\sigma_7\sigma_8\sigma_7\sigma_6\sigma_5\sigma_3\sigma_2\sigma_1$, given by
\begin{equation}
(\tau_{1,5}(\x_S))_j=\left\lbrace \begin{array}{cc} x_{3}^{-1}x_5^{-1}x_7^{-1} & j=1 \\ x_{1}^{-1}x_3^{-1}x_7^{-1} & j=5 \\ x_1 x_3 x_4 x_5 x_7 & j=4 \\ x_j & j=3, 7 \end{array} \right.
\end{equation}
yielding the functional equation:
\begin{equation}
R(\x_S)=(*)R(\tau_i(\x_S)).
\end{equation}
with
\begin{align}
(*)=& \left(\frac{1-x_1^{-2}}{1-qx_1^2}\right)\left( \frac{1-x_5^{-2}}{1-qx_5^2}\right)\left( \frac{1-x_1^{-2}x_3^{-2}}{1-qx_1^2x_3^2}\right)\left( \frac{1-x_5^{-2}x_7^{-2}}{1-qx_5^2x_7^2}\right) \nonumber\\ &\left(\frac{1-x_1^{-2}x_3^{-2}x_7^{-2}}{1-qx_1^2x_3^2x_7^2}\right)\left( \frac{1-x_3^{-2}x_5^{-2}x_7^{-2}}{1-qx_3^2x_5^2x_7^2}\right)\left( \frac{1-x_1^{-1}x_3^{-1}x_5^{-1}x_7^{-1}}{1-qx_1x_3x_5x_7}\right) .
\end{align}
$R_0(\x_S)$ satisfies this functional equation. The positive orbits which map to negative orbits are $[e_1], [e_1+e_2+e_3], [e_5], [e_5+e_6+e_7], [e_1+e_2+e_3+e_8+e_7], [e_5+e_6+e_7+e_8+e_3],[e_1+e_2+e_3+e_5+e_6+e_7+e_8]$.

Finally, $\tau_4$ corresponds to $\sigma_4\sigma_8\sigma_3\sigma_2\sigma_7\sigma_6\sigma_8\sigma_3\sigma_4\sigma_7\sigma_8\sigma_3\sigma_4\sigma_7\sigma_8\sigma_2 \sigma_3\sigma_6\sigma_7\sigma_8\sigma_4$, and is given by
\begin{equation}
(\tau_4(\x_S))_j=\left\lbrace \begin{array}{cc} x_{3}^{-2}x_4^{-1}x_7^{-2} & j=4 \\ x_{j}x_3^2x_4^2x_7^2 & j=1, 5 \\ x_j & j=3, 7 \end{array} \right. .
\end{equation}
The functional equation is 
\begin{equation}
R(\x_S)=(*)R(\tau_i(\x_S)).
\end{equation}
with
\begin{align}
(*)=& \left(\frac{1-x_4^{-2}}{1-qx_4^2}\right)\left( \frac{1-x_3^{-2}x_4^{-2}}{1-qx_3^2x_4^2}\right)\left( \frac{1-x_4^{-2}x_7^{-2}}{1-qx_4^2x_7^2}\right) \nonumber\\ & \left( \frac{1-qx_3^{-2}x_4^{-2}x_7^{-2}}{1-x_3^2x_4^2x_7^2}\right) \left(\frac{1-x_3^{-2}x_4^{-2}x_7^{-2}}{1-qx_3^2x_4^2x_7^2}\right)^3 \left( \frac{1-q^{-1}x_3^{-2}x_4^{-2}x_7^{-2}}{1-q^2x_3^2x_4^2x_7^2}\right) \nonumber \\&\left( \frac{1-x_3^{-4}x_4^{-2}x_7^{-2}}{1-qx_3^4x_4^2x_7^2}\right) \left( \frac{1-x_3^{-2}x_4^{-2}x_7^{-4}}{1-qx_3^2x_4^2x_7^4}\right) \left( \frac{1-x_3^{-4}x_4^{-2}x_7^{-4}}{1-qx_3^4x_4^2x_7^4}\right) 
\end{align}
The positive orbits which map to negative orbits under $\tau_4$ are $[e_4], [e_4+e_7+e_8], [e_3+e_4+e_8], [e_3+e_4+e_7+e_8], [e_2+2e_3+e_4+e_7+2e_8], [e_3+e_4+e_6+2e_7+2e_8], [e_2+2e_3+e_4+e_6+2e_7+2e_8], [e_2+2e_3+2e_4+e_6+2e_7+3e_8]$, so $R_0(\x_S)$ satisfies this functional equation.

If we have a nonzero coefficient $d_{a_1, a_3, a_4, a_5, a_7}$ of the ratio of two residues, the $\tau_3$ and $\tau_5$ relations imply $2a_3 \leq a_1+a_4+a_7$ and $2a_7 \leq a_3+a_4+a_5$. The $\tau_{1, 4}$ and $\tau_{7, 4}$ relations imply $a_1+a_4 \leq a_7$ and $a_4+a_5 \leq a_3$. The $\tau_{1, 5}$ relation implies $a_1+a_5 \leq a_4$. The $\tau_4$ relation implies $a_4 \leq a_1+a_5$. Together, these prove that $(a_1, a_3, a_4, a_5, a_7)$ is proportional to $(1, 3, 2, 1, 3)$. 

The final case is $\tilde{E}_8$, where we label the vertices of the Dynkin diagram:

\includegraphics[scale=.5]{E_8}

\noindent If $S= \lbrace 1, 5, 7, 9 \rbrace$, then we have functional equations $\tau_5, \tau_7$ as in (\ref{2neighborfe}) and $\tau_9$ as in (\ref{3neighborfe}). We also have a functional equation $\tau_1$ derived from $\sigma_1\sigma_2\sigma_9\sigma_3\sigma_8\sigma_9\sigma_2\sigma_1$, which is analogous to $\tau_1$ in the case of $\tilde{E}_6$. The computation of the functional equation and the proof that $R_0(\x)$ satisfies it are identical to the argument in the $\tilde{E}_6$ proof above. If $d_{a_1, a_5, a_7, a_9}$ is a nonzero coefficient in the ratio of two residues, $\tau_5$ implies that $2a_5 \leq a_7$, $\tau_7$ implies that $2a_7 \leq a_5+a_9$, $\tau_9$ implies that $a_9 \leq a_1+a_7$, and $\tau_1$ implies that $2a_1 \leq a_7$. Hence, $(a_1, a_5, a_7, a_9)$ must be proportional to $(1, 1, 2, 3)$. 

If $S=\lbrace 2, 3, 4, 6, 8 \rbrace$, then we have functional equations $\tau_2, \tau_6, \tau_8$ as in (\ref{2neighborfe}). We also will use two supplemental functional equations. The first, $\tau_3$, corresponds to the Weyl group element $\sigma_3\sigma_9\sigma_2\sigma_1\sigma_8\sigma_7\sigma_9\sigma_2\sigma_3\sigma_8\sigma_9\sigma_2\sigma_3\sigma_8\sigma_9\sigma_7 \sigma_8 \sigma_1\sigma_2\sigma_9\sigma_3$, and is analogous to $\tau_4$ in the $\tilde{E}_7$ case. The second, $\tau_{3, 4}$ corresponds to $\sigma_3\sigma_4\sigma_5\sigma_6\sigma_7\sigma_8\sigma_9\sigma_8\sigma_7\sigma_6\sigma_5\sigma_4\sigma_3$. It defines a transformation 
\begin{equation}
(\tau_{3,4}(\x_S))_j=\left\lbrace \begin{array}{cc} x_{4}^{-1}x_6^{-1}x_8^{-1} & j=3 \\ x_{3}^{-1}x_6^{-1}x_8^{-1} & j=4 \\ x_2 x_3 x_4 x_6 x_8 & j=2 \\ x_j & j=6, 8 \end{array} \right.
\end{equation}
and a functional equation  
\begin{equation}
R(\x_S)=(*)R(\tau_i(\x_S)).
\end{equation}
with
\begin{align}
(*)=& \left(\frac{1-x_3^{-2}}{1-qx_3^2}\right)\left( \frac{1-x_4^{-2}}{1-qx_4^2}\right)\left( \frac{1-x_4^{-2}x_6^{-2}}{1-qx_4^2x_6^2}\right) \left( \frac{1-x_3^{-2}x_8^{-2}}{1-qx_3^2x_8^2}\right) \nonumber\\ & \left(\frac{1-x_4^{-2}x_6^{-2}x_8^{-2}}{1-qx_4^2x_6^2x_8^2}\right) \left( \frac{1-x_3^{-2}x_6^{-2}x_8^{-2}}{1-qx_3^2x_6^2x_8^2}\right) \left( \frac{1-x_3^{-1}x_4^{-1}x_6^{-1}x_8^{-1}}{1-qx_3x_4x_6x_8}\right) 
\end{align}
$R_0(\x_S$ satisfies this functional equation. The positive orbits which map to negative orbits are $[e_3], [e_4], [e_3+e_8+e_9], [e_4+e_5+e_6], [e_3+e_6+e_7+e_8+e_9], [e_4+e_5+e_6+e_7+e_8], [e_3+e_4+e_5+e_6+e_7+e_8+e_9]$.

If $d_{a_2, a_3, a_4, a_6, a_8}$ is a nonzero coefficient in the ratio of residues, then we have $2a_2 \leq a_3+a_8$, $a_3 \leq a_6$, $a_3+a_4 \leq a_2$, $2a_6 \leq a_4+a_8$, and $2a_8 \leq a_2+a_3+a_6$. We leave it as an exercise to show that $(a_2, a_3, a_4, a_6, a_8)$ is proportional to $(4, 3, 1, 3, 5)$. 

This completes the computation in all simply-laced affine types.
\end{proof}

With this proposition, our main theorem (\ref{main}) is also completely proven.

\section{The Diagonal Part $R_1$ of the Residue}

We now state conjectures for the diagonal part of the residue, $R_1(\x_S)$. This is a power series in one variable $\x_S^{\alpha_0|_S}$, which, like $R_0(\x_S)$, can be written as an infinite product of function field zeta functions. The exact form of $R_1(\x_S)$ depends on the type and on the set $S$. 

\begin{conj} \label{R1}
We have 
\begin{align} \label{R1Formula}
R_1(\x_S)=\prod_{m=0}^{\infty} & (1-q^{(m+1)(\text{ht}(\alpha_0|_S)-\text{ht}(\alpha_0|_T))} \x^{(2m+2) \alpha_0|_S})^{-|T|} \nonumber \\
&(1-q^{(m+1) (\text{ht}(\alpha_0|_S)-\text{ht}(\alpha_0|_T))+1}  \x^{(2 m+2) \alpha_0|_S})^{-|T|} \nonumber \\
&(1-q^{(m+1/2) (\text{ht}(\alpha_0|_S)-\text{ht}(\alpha_0|_T))} \x^{(2 m+1) \alpha_0|_S})^{-\lambda} \nonumber \\
&(1-q^{(m+1/2) (\text{ht}(\alpha_0|_S)-\text{ht}(\alpha_0|_T))+1}  \x^{(2 m+1) \alpha_0|_S})^{-\lambda}.
\end{align}
The values of $\lambda$ are given in the following table:

\begin{tabular}{l | c | r}
Type & S & $\lambda$ \\
\hline
$\tilde{A}_n$, $n$ odd & - & $1$ \\
$\tilde{D}_n$, $n$ odd & - & $1$ \\
$\tilde{D}_n$, $n$ even & $\lbrace 1, 2, 4, 6, \ldots n-2, n, n+1 \rbrace$ & $0$ \\
$\tilde{D}_n$, $n$ even & $\lbrace 3, 5, 7, \ldots n-1 \rbrace$ & $3$ \\
$\tilde{E}_6$ & $\lbrace 1, 3, 5, 7 \rbrace$ & $0$ \\
$\tilde{E}_6$ & $\lbrace 2, 4, 6 \rbrace$ & $1$ \\
$\tilde{E}_7$ & $\lbrace 1, 3, 4, 5, 7 \rbrace$ & $0$ \\
$\tilde{E}_7$ & $\lbrace 2, 6, 8 \rbrace$ & $2$ \\
$\tilde{E}_8$ & $\lbrace 1, 5, 7, 9 \rbrace$ & $1$ \\
$\tilde{E}_8$ & $\lbrace 2, 3, 4, 6, 8 \rbrace$ & $0$ \\
\end{tabular}
\end{conj}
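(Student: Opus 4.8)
The proof of Theorem \ref{main} already reduces everything to one unknown one-variable series. Once the off-diagonal factor $R_0(\x_S)$ is fixed (Proposition \ref{R0}), $R_1(\x_S)$ is the \emph{unique} power series in $\x_S^{\alpha_0|_S}$ for which $R_0R_1$ obeys Property \ref{symmetry} and the associated $Z(\x)$ obeys Axiom \ref{dominance}; concretely, writing $G(x)R_{0,\text{diag}}(q^{\text{ht}(\alpha_0|_T)}x)^{-1}=\prod_k(1-q^{\mu(k)}x^{\nu(k)})^{-\lambda(k)}$ with $0<\mu(k)<\nu(k)\,\text{ht}(\alpha_0)$, one replaces each factor with $\mu(k)\le\nu(k)\,\text{ht}(\alpha_0)/2$ by the pair $(1-q^{\mu(k)}x^{\nu(k)})^{-\lambda(k)}(1-q^{1-\mu(k)+\nu(k)\,\text{ht}(\alpha_0)}x^{\nu(k)})^{-\lambda(k)}$ and deletes the rest. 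Running (\ref{R1Formula}) backwards through this recipe, and using $\text{ht}(\alpha_0)=\text{ht}(\alpha_0|_S)+\text{ht}(\alpha_0|_T)$ together with the identity $\sum_{i\in S}N(i)(\alpha_0)_i=2\,\text{ht}(\alpha_0|_T)$ (which follows from the null-vector relation $\sum_{i\sim j}(\alpha_0)_i=2(\alpha_0)_j$ and bipartiteness of $\Gamma$), one sees that every factor appearing in the conjectured $R_1$ sits \emph{exactly} at the midpoint $\mu=\nu\,\text{ht}(\alpha_0)/2$. Hence Conjecture \ref{R1} is equivalent to the single assertion
\[
G(x)\,R_{0,\text{diag}}(q^{\text{ht}(\alpha_0|_T)}x)^{-1}=\prod_{m=0}^{\infty}(1-q^{(m+1)\text{ht}(\alpha_0)}x^{2m+2})^{-|T|}(1-q^{(m+\frac12)\text{ht}(\alpha_0)}x^{2m+1})^{-\lambda},
\]
and the task splits into computing $R_{0,\text{diag}}$ and computing $G$.

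\textbf{Step 1: the diagonal part of $R_0$.} From the explicit formula (\ref{R0formula}) I would extract exactly the factors whose monomial $\x_S^{\nu}$ has $\nu$ proportional to $\alpha_0|_S$: these come from the orbits $[\alpha]$ of positive roots with $\alpha|_S$ (when $t(\alpha)=0$) or $2\alpha|_S$ (when $t(\alpha)>0$) a multiple of $\alpha_0|_S$. Using the classification $\alpha=\beta+m\alpha_0$, $\beta\in\Psi$, this is a finite enumeration in each type; one records $\text{ht}(\alpha|_S)$, $\text{ht}(\alpha|_T)$ and $t(\alpha)$ along the way, and the resulting $q$-exponents are pinned by the formula. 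In particular, in the $\tilde{A}_n$, $n$ odd case this enumeration is short and already accounts for the first two families of (\ref{R1Formula}) once combined with Step 2.

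\textbf{Step 2: the universal factor $G(x)$.} $G(x)$ is the diagonal residue of the canonical series with $c_{0,\ldots,0}(q)=1$ and $c_{m\alpha_0}(q)=0$ for $m>0$; since the latter series equals $Z_{\text{avg}}$ divided by $(Z_{\text{avg}})_{\text{diag}}(\x^{\alpha_0})$, one gets $G(x)R_{0,\text{diag}}(q^{\text{ht}(\alpha_0|_T)}x)^{-1}=R_1^{\text{avg}}(q^{\text{ht}(\alpha_0|_T)}x)\,(Z_{\text{avg}})_{\text{diag}}(x)^{-1}$, where $R_1^{\text{avg}}$ is the diagonal part of the residue of $Z_{\text{avg}}$. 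So the plan is to compute the diagonal part of $Z_{\text{avg}}$ and of its residue directly from the averaging formula $Z_{\text{avg}}=\sum_{w\in W}\Delta(w(\x))^{-1}(1|w)(\x)$ and the denominator $D(\x)$. For $\tilde{A}_n$ with $n$ odd this is where the Euler product of $R(\x_S)$ saves the day: the $\tau_i$, $i\in S$, generate an affine Weyl group $W'\cong\tilde{A}_{(n-1)/2}$ (degenerating to rank one when $n=3$), so $R(\x_S)$ is itself an Euler-product multiple Dirichlet series of the axiomatic type; replaying Proposition \ref{diagonalcoeffs} for $R$ forces its diagonal coefficients into the rigid product shape, and Property \ref{symmetry} together with Axiom \ref{dominance} pins them to (\ref{R1Formula}) with $\lambda=1$. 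Carrying this out, and comparing with Step 1, yields the theorem in type $\tilde{A}_n$, $n$ odd.

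\textbf{Main obstacle.} The bottleneck is Step 2 outside the Euler-product case: $G(x)$ (equivalently the diagonal part of $Z_{\text{avg}}$) packages precisely the imaginary-root combinatorics of the averaging construction, and there is no evident leverage to evaluate it in closed form in types $\tilde{D}$ and $\tilde{E}$. What the method does establish in general is only that \emph{some} $R_1$ of the stated form exists and satisfies the required symmetry and dominance; matching it with the explicit product (\ref{R1Formula}) and the tabulated values of $\lambda$ rests on the computational evidence until $G(x)$ — or, equivalently, the diagonal coefficients $c_{m\alpha_0}$ of $Z$ — is understood. Any proof of the full conjecture will have to engage this point directly, most plausibly through a better grasp of the Kac--Moody Eisenstein series whose Whittaker coefficient $Z$ is conjectured to be, or through a self-similarity argument reducing $R(\x_S)$ to a lower-rank case even without a genuine Euler product.
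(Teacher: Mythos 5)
Your framing of the problem is the same as the paper's: fix $R_0$ by Proposition \ref{R0}, use equation (\ref{residuetodiagonal}) together with Property \ref{symmetry} and Axiom \ref{dominance} to see that everything hinges on the one-variable series $G(x)R_{0,\text{diag}}(q^{\text{ht}(\alpha_0|_T)}x)^{-1}$, and observe that the conjectured $R_1$ consists of factor pairs sitting at $\mu=\nu\,\text{ht}(\alpha_0)/2$ and $\mu=\nu\,\text{ht}(\alpha_0)/2+1$. (One small correction: the conjecture is equivalent only to identifying the factors of $G\,R_{0,\text{diag}}^{-1}$ with $\mu\leq\nu\,\text{ht}(\alpha_0)/2$; factors above the midpoint are absorbed into $Z_{\text{diag}}$ and are not constrained, which is why the paper works with the statement $R_{\text{diag}}(x)=R^{\flat}_{\text{diag}}(x)(1+O(q))$ rather than an exact product identity for $G$.) But the proposal stops exactly where the actual work begins. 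For types $\tilde{D}$ and $\tilde{E}$ you concede this, which is consistent with the paper — the statement is a conjecture there, supported by computation. The problem is that your claimed proof in type $\tilde{A}_n$, $n$ odd, also has a genuine gap: the assertion that, because the $\tau_i$ for $i\in S$ generate a group isomorphic to the Weyl group of $\tilde{A}_{(n-1)/2}$, ``replaying Proposition \ref{diagonalcoeffs} for $R$ forces its diagonal coefficients into the rigid product shape'' cannot work as stated. Proposition \ref{diagonalcoeffs} never forces diagonal coefficients — it says precisely that they are free parameters — and the residue $R(\x_S)$ is not literally a series ``of the axiomatic type'' for the lower-rank system: its functional-equation cocycles (e.g. $\frac{(1-x_i^{-2})(1-qx_i^{-2})}{(1-x_i^2)(1-qx_i^2)}$) differ from the standard ones, its coefficients are multiplicative rather than twisted multiplicative, and no analogue of Axioms \ref{localglobal}–\ref{dominance} for $R$ is available except by translating the axioms for $Z$ through equation (\ref{residuetodiagonal}) — which reintroduces the unknown series $G(x)$ you never compute. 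Even granting self-similarity, the same indeterminacy (unique up to a diagonal series) would simply recur one rank down.

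What the paper actually does at this point, and what your proposal is missing, is the evaluation of the low-order part of $G(x)$: by Dominance, $Z_{\text{diag}}(q^{-(n+1)/2}x)=1+O(q)$, so it suffices to show that the lowest term of the coefficient $p_a(q)$ of $G$ has degree $a(n+1)$ with coefficient equal to the $x^a$-coefficient of $R^{\flat}_{\text{diag}}(x)$. This is proved by a purely combinatorial argument: the coefficient recurrences (\ref{oddrecursion2}), (\ref{evenrecursion2}) are iterated along a specific reduction scheme from $c_{a,2a,\ldots,a,2a}$ down to $c_{0,\ldots,0}$, the minimal power of $q$ gained is identified, the recurrences are truncated to their lowest-order parts, and the surviving reduction paths are put in bijection with $(n+1)/2$-tuples of partitions (via a conjugate-partition decomposition, with a strictly decreasing partition $\gamma$ accounting for parity) whose generating function is exactly $R^{\flat}_{\text{diag}}(x)$. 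None of this is present in, or implied by, your Steps 1–2, so the proposal does not prove the conjecture even in the one case where the paper does.
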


This conjecture is based upon computational evidence in all types. In the following section, we will prove it for type $\tilde{A}_n$, but the other cases remain open. To conclude this section, we make several remarks.

The formula (\ref{R1Formula}) is an eta-product--that is, a product of four Dedekind eta functions. The classification of all roots in an affine root system as $m\alpha_0+\alpha$ for $\alpha$ in a finite set implies that (\ref{R0formula}) is an eta-product as well. The full residue $R(\x_S)=R_0(\x_S)R_1(\x_S)$ is strongly reminiscent of formulas appearing in the MacDonald identities for affine Weyl groups \cite{M}. One might hope that these identities furnish a more straightforward proof of Proposition (\ref{R0}), or any proof of Conjecture (\ref{R1}). 

If Conjecture (\ref{R1}) is verified, then the residue is meromorphic in the domain $|\x_S^{\alpha_0|_S}|<q^{(\text{ht}(\alpha_0|_T)-\text{ht}(\alpha_0|_S))/2}$. The proof of meromorphicity for $R_0(\x)$ is the same as the proof for $D(\x)$ given in Chapter (\ref{chap:3}), and the proof for $R_1(\x)$ is even more straightforward. This implies the meromorphic continuation of the full multiple Dirichlet series $Z(\x)$ to its largest possible domain $|\x^{\alpha_0}|<q^{-\text{ht}(\alpha_0)/2}$. We see this as follows: let $Z_{\text{avg}}(\x)$ denote the multiple Dirichlet series constructed by averaging over the group of functional equations in Proposition (\ref{averaging}), and let $R_{\text{avg}}(\x_S)$ denote its residue. $Z_{\text{avg}}(\x)$ is known to be meromorphic in the largest possible domain. Recall from the previous section that the ratio of $Z$ to $Z_{\text{avg}}$ is essentially the same as the ratio of $R$ to $R_{\text{avg}}$: if
\begin{equation}
\frac{R(\x_S)}{R_{\text{avg}}(\x_S)}=F(q^{-\text{ht}(\alpha_0|_T)} \x_S^{\alpha_0|_S}),
\end{equation}
then we also have 
\begin{equation}
\frac{Z(\x)}{Z_{\text{avg}}(\x)}=F(\x^{\alpha_0}).
\end{equation}
Since $R$ and $R_{\text{avg}}$ are meromorphic, $F(q^{-\text{ht}(\alpha_0|_T)} \x_S^{\alpha_0|_S})$ must be meromorphic for  $\x_S^{\alpha_0|_S}<q^{(\text{ht}(\alpha_0|_T)-\text{ht}(\alpha_0|_S))/2}$. Therefore, $Z(\x)=Z_{\text{avg}}(\x)F(\x^{\alpha_0})$ is meromorphic for $|\x^{\alpha_0}|<q^{-\text{ht}(\alpha_0)/2}$.

Diaconu and Bucur in the $\tilde{D}_4$ case construct a series whose residue is assumed to be solely the $R_0$ we have computed above, without the $R_1$. Under this assumption, they obtain meromorphic continuation to the optimal domain.

One brief comment on the Eisenstein conjecture: Eisenstein series on Kac-Moody Lie groups are expected to have poles corresponding to all roots, real and imaginary. This phenomenon should be visible in the Whittaker coefficient $Z(\x)$. The poles corresponding to real roots are those which can be deduced from the functional equations alone--they are the poles of $D(\x)$. The averaged series $Z_{\text{avg}}$ has only these poles. The series $Z(\x)$ satisfying the axioms, on the other hand, must have poles corresponding to imaginary roots. This follows from Conjecture (\ref{R1}). The first factor of equation (\ref{R1Formula}) comes from real roots, but the other factors do not. We cannot describe the full family of poles of $Z(\x)$ coming from imaginary roots, because some of them may be canceled in the residue, but we can assert that such poles do exist. This is a piece of evidence that the series $Z(\x)$ is the correct one for the Eisenstein conjecture.

\chapter{Computing the Full Residue in Type $\tilde{A}$}\label{chap:6}
\section{Restatement of Results in Type $\tilde{A}$}

Let $W$ be the Weyl group of a simply laced affine root system $\tilde{A}_n$, with $n$ odd. Label the vertices of the Dynkin diagram $1$ to $n+1$ modulo $n+1$:

\includegraphics[scale=.5]{A_n}

\noindent Let $Z(x_1, \ldots x_{n+1})$ be the $\tilde{A}_n$ multiple Dirichlet series satisfying the four axioms. In this chapter we prove Conjecture (\ref{R1}) for this series. That is, we prove the following residue formula.
\begin{align}\label{residue}
&R(x_1, x_3, \ldots x_n):=(-q)^{(n+1)/2} \text{Res}_{x_2=x_4=\cdots=x_{n+1}=1/q} Z(x_1, \ldots x_{n+1}) \nonumber \\ 
&=\prod_{m=0}^{\infty} (1-(x_1 x_3\cdots x_n)^{2m+1})^{-1}(1-q (x_1 x_3\cdots x_n)^{2m+1})^{-1} \nonumber\\ 
&\left(\prod_{i,j \text{ odd, }i \not\equiv j+2} (1-(x_1 x_3\cdots x_n)^{2m}(x_i x_{i+2}\cdots x_j)^2)^{-1}(1-q (x_1 x_3\cdots x_n)^{2m}(x_i x_{i+2}\cdots x_j)^2)^{-1}\right) \nonumber \\ 
&(1-(x_1 x_3\cdots x_n)^{2m+2})^{-(n+1)/2}(1-q (x_1 x_3\cdots x_n)^{2m+2})^{-(n+1)/2}
\end{align}
A priori, we know that this residue is meromorphic for $|x_1 x_3 \cdots x_n|<q^{-(n+1)/2}$, but by this formula, it is actually meromorphic for $|x_1 x_3 \cdots x_n|<1$. It follows that $Z(\x)$ is meromorphic for $|x_1\cdots x_{n+1}|<q^{-(n+1)/2}$. 

Let us recall the results of the previous two chapters, specialized to type $\tilde{A}$. We gave a formula (\ref{residueformula1}) for $R$ in terms of the coefficients $c_{a_1, \ldots a_{n+1}}$ of the original power series $Z$, namely:
\begin{equation}\label{rseries}
R(x_1, x_3, \ldots x_n) = \sum_{a_1, a_3, \ldots a_n} \frac{c_{a_1, a_1+a_3, a_3, a_3+a_5, \ldots, a_n, a_n+a_1}}{q^{2(a_1+a_3+\ldots+a_n)}} x_1^{a_1}x_3^{a_3}\cdots x_n^{a_n}.
\end{equation}
In particular, the nonvanishing coefficients must have $a_1, a_3, \ldots a_n$ all odd or all even, since if $a_i+a_{i+2}$ is odd, then $c_{\ldots a_i, a_i+a_{i+2}, a_{i+2}, \ldots}=0$.

In Proposition (\ref{R0}) of the previous chapter, we gave a formula for the off-diagonal factors of the residue, which in this case match the factors in the third line of (\ref{residue}).
\begin{align}
R_0(x_1, \ldots x_n)=\prod_{m=0}^{\infty}\prod_{\substack{i,j \text{ odd, }\\i \not\equiv j+2}} (1-(x_1 \cdots x_n)^{2m}(x_i \cdots x_j)^2)^{-1}(1-q (x_1\cdots x_n)^{2m}(x_i\cdots x_j)^2)^{-1}
\end{align}
Any residue of a series satisfying the functional equations matches this one up to a diagonal power series in $x_1x_3\cdots x_n$; we must show that the correct diagonal series is given by the second and fourth lines of (\ref{residue}).

Recall equation (\ref{residuetodiagonal}) of chapter \ref{chap:4}, which was used in the proof of the main theorem. This equation is equivalent to the following:
\begin{equation}
G(q^{-(n+1)/2}x)Z_{\text{diag}}(q^{-(n+1)/2}x)= R_{\text{diag}}(x)
\end{equation}
Here $Z_{\text{diag}}$ and $R_{\text{diag}}$ denote the diagonal parts of series and its residue, with $x$ substituted for $x_1x_2\cdots x_{n+1}$ and $x_1 x_3 \cdots x_{n}$ respectively. $G(q^{-(n+1)/2}x)$ is what the diagonal part of the residue would be in a series satisfying the functional equations with diagonal coefficients $c_{0, \ldots 0}(q)=1$ and $c_{m, \ldots m}(q)=0$ for all $m>0$. Its coefficients are $q^{-a(n+1)}c_{a, 2a, a, 2a, \ldots a, 2a}(q)$, again under this assumption about the diagonal coefficients. To avoid confusion with the correct coefficients, we will write $q^{-a(n+1)}p_a(q)$ for a coefficient of $G(q^{-(n+1)/2}x)$.

Now we will employ the other axioms. The dominance axiom states that $c_{a, a, \ldots a}$ is a polynomial in $q$, divisible by $q^{a(n+1)/2+1}$ except when $a=0$; hence $Z_{\text{diag}}(q^{-(n+1)/2}x)=1+O(q)$.  Define 
\begin{align*}
R^{\flat}(x_1, x_3, \ldots x_n):=&\prod_{m=0}^{\infty} (1-(x_1 x_3\cdots x_n)^{2m+1})^{-1} \\
&(\prod_{i,j \text{ odd, }i \not\equiv j+2} (1-(x_1 x_3\cdots x_n)^{2m}(x_i x_{i+2}\cdots x_j)^2)^{-1} \\ 
&(1-(x_1 x_3\cdots x_n)^{2m+2})^{-(n+1)/2}
\end{align*}
and let $R^{\flat}_{\text{ diag}}(x)$ denote the diagonal part of the series $R^{\flat}$. By Property (\ref{symmetry}), the local-global axiom is equivalent to the statement that the factors of $R(x_1, x_3, \ldots x_n)$ come in pairs: $(1-q^b x_1^{a_1} x_3^{a_3}\cdots x_n^{a_n})^{-1}$ with $(1-q^{1-b} x_1^{a_1} x_3^{a_3}\cdots x_n^{a_n})^{-1}$. Therefore, if we can show that $R(x_1, x_3, \ldots x_n)=R^{\flat}(x_1, x_3, \ldots x_n)(1+O(q))$, we will verify all of equation (\ref{residue}).  Moreover, since we have already determined the off-diagonal factors of $R$, it suffices to show that $R_{\text{diag}}(x)=R^{\flat}_{\text{diag}}(x)(1+O(q))$. By equation (\ref{residuetodiagonal}), this means 
\begin{equation}
\sum_a \frac{p_a(q)}{q^{a(n+1)}} x^a = R^{\flat}_{\text{diag}}(x)(1+O(q)).
\end{equation}

\section{A Combinatorial Proof}

More concretely, we must prove the following:
\begin{prop}
The lowest term in $p_a(q)$ has degree $a(n+1)$ and coefficient equal to the coefficient of $x^a$ in $R^{\flat}_{\text{diag}}(x)$.
\end{prop}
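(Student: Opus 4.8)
The plan is to compute the lowest-degree (in $q$) term of $p_a(q)$, which by definition is the coefficient $c_{a, 2a, a, 2a, \ldots, a, 2a}(q)$ of the degenerate series obtained by solving the functional equations (\ref{oddglobalfe})--(\ref{evenglobalfe}) with $c_{0,\ldots,0}=1$ and $c_{m,\ldots,m}=0$ for all $m>0$. The only inputs are the coefficient recursions (\ref{oddrecursion2})--(\ref{evenrecursion2}) of Proposition (\ref{axiomsimplyfes}), the vanishing $c_{b_1,\ldots,b_{n+1}}=0$ whenever some $b_i<0$ or all $b_i$ are equal and positive, and the normalization $c_{0,\ldots,0}=1$. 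Note that the $\ldots$-factor $q^{-2(a_1+\cdots+a_n)}$ in (\ref{rseries}) equals $q^{-a(n+1)}$ on the diagonal, so proving the proposition is exactly what is needed to make $R_{\text{diag}}(x)=R^\flat_{\text{diag}}(x)(1+O(q))$.

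First I would unwind the recursion. In type $\tilde A_n$ every vertex has exactly two neighbors, so the recursion at vertex $i$ alters only the $i$th index: the even recursion writes $c_{\ldots,b_i,\ldots}$ as a $\Z[q]$-combination of $c_{\ldots,b_i-1,\ldots}$ and $c_{\ldots,(b_{i-1}+b_{i+1})-b_i-\epsilon,\ldots}$ with $\epsilon\in\{0,1\}$, and the odd recursion reflects $b_i$ across $\tfrac12(b_{i-1}+b_{i+1}-1)$ up to an explicit power of $q$. Iterating — lowering each even index of $(a,2a,a,2a,\dots)$ to at most the midpoint of its neighbors, then lowering the odd indices, and repeating — expresses $c_{a,2a,\ldots}$ as a finite signed sum $\sum_{P} \pm\, q^{e(P)}$ over ``reduction sequences'' $P$, each ending at the origin (a nonzero diagonal endpoint contributes $0$). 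I would track the exponent $e(P)$ accumulated along the way and prove $\min_{P} e(P)=a(n+1)$: this reduces to the inequality that each step costs at least its reflection displacement, that the greedy reductions (always jumping to the reflected block rather than decrementing) are cheapest, and that the cumulative cost telescopes to $a(n+1)$.

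Next I would isolate the set $\mathcal{E}_a$ of extremal sequences with $e(P)=a(n+1)$, describing it as explicit combinatorial data on the cycle of odd vertices — an ordered system of arcs $[i,j]$ of odd vertices that are successively subtracted — and show that all signs along extremal sequences are $+1$, so that the leading coefficient of $p_a(q)$ equals the cardinality of $\mathcal{E}_a$ counted with the multiplicities coming from coincident arcs. Finally I would read $[x^a]R^\flat_{\text{diag}}(x)$ off the product formula: it counts multisets drawn from three families of factors — the arc factors $(1-u^{2m}v_{ij}^2)^{-1}$, the single diagonal family $(1-u^{2m+1})^{-1}$, and the $(n+1)/2$-fold diagonal family $(1-u^{2m+2})^{-1}$ — whose degrees balance to $u^a$. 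A degree-and-parity bookkeeping matches these objects bijectively with $\mathcal{E}_a$: the arcs $[i,j]$ correspond to the subtracted blocks of the previous step, and the two diagonal families record the two parities in which a reflection can land exactly on a multiple of $\alpha_0|_S$, which is precisely the origin of the multiplicities $1$ and $(n+1)/2$.

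I expect the genuine obstacle to be the sign/positivity analysis: the recursion produces exponentially many terms sharing the minimal $q$-degree, so the task is not to bound but to compute the leading coefficient exactly, i.e.\ to show that all non-extremal contributions at a given degree either cancel in pairs or are honestly of higher degree, and that no cancellation occurs among the extremal terms. This cancellation lemma, rather than the bounding estimate or the final bijection, is the combinatorial heart of the chapter (presumably the step suggested by A.\ Pusk\'as). A secondary technical nuisance is making the bookkeeping cyclically symmetric so that the count does not depend on the arbitrary choice of which even vertex is reduced first.
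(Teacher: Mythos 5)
Your opening half matches the paper's strategy: you treat $p_a(q)$ as the diagonal coefficient of the degenerate series with $c_{0,\ldots,0}=1$, $c_{m\alpha_0}=0$ for $m>0$, unwind the recurrences (\ref{oddrecursion2})--(\ref{evenrecursion2}) by alternately reducing even and odd indices, and prove $\min_P e(P)=a(n+1)$ by the telescoping estimate that reducing from a state with index sums $(E,O)$, $E\geq O$, costs at least $q^{E-O}$ and lands in a state with even sum at most $O$, so the total cost is at least $\max(E,O)$. That is exactly the paper's lower bound. The genuine gap is everything after that: the leading coefficient is an exact count, and your ``degree-and-parity bookkeeping matches these objects bijectively with $\mathcal{E}_a$'' is an assertion of the theorem, not a proof of it. In the paper this is the bulk of the argument: extremal chains are encoded as $(n+1)/2$-tuples of nested partitions $d_i^{(1)}\geq d_i^{(2)}\geq\cdots$ subject to the parity condition and the cyclic balance condition (\ref{tele}); one then extracts a unique strictly decreasing partition $\gamma$ with $\delta_i=2\tilde{\delta}_i+\gamma^*$, whose generating function $\prod_k(1-x^{2k+1})^{-1}$ accounts for the odd-power diagonal factor, and decomposes each $\tilde{\delta}_i$ as $\sum_k\tilde{\delta}_{i,k}^*$ with entries of $\tilde{\delta}_{i,k}$ in a fixed residue class mod $(n+1)/2$, which produces the arc factors and, from the full-cycle arcs based at each of the $(n+1)/2$ odd vertices, the $(n+1)/2$-fold even-power diagonal family. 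Your proposed interpretation of the multiplicities $1$ and $(n+1)/2$ (``the two parities in which a reflection can land exactly on a multiple of $\alpha_0|_S$'') does not describe this mechanism and is not correct as stated; without the conjugate-partition decomposition there is no route from your arcs to the eta-product $R^{\flat}_{\text{diag}}$.

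You have also mislocated the difficulty. No cancellation lemma is needed: in the even recurrence the negatively signed terms $-q\,c_{\ldots a_i'-1,\ldots}$ carry an extra factor of $q$ at the step where they arise, and the same telescoping bound shows any path through such a term has total exponent at least $a(n+1)+1$. Hence at the minimal degree every surviving term appears with coefficient $+1$, and the recurrences collapse to the positive simplified sums (\ref{oddrec}) and (\ref{evenrec}) (including the further pruning to the upper limit $\mathrm{Min}(a_{i-1},a_{i+1})$, which your ``greedy reductions are cheapest'' step does not capture but which is needed to get the count right). The combinatorial heart of the chapter is the exact enumeration via tuples of partitions and the $\gamma$-extraction, not a sign analysis, and that step is missing from your proposal.
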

\begin{proof}
The proof requires closely examining the combinatorics of the recurrences on coefficients of $Z$. Recall the statement of the recurrence associated to functional equation $\sigma_i$: if $a_{i-1}+a_{i+1}$ is odd, then 
\begin{equation} 
c_{\ldots a_i, \ldots}=q^{a_i-(a_{i-1}+a_{i+1}-1)/2} c_{\ldots a_{i-1}+a_{i+1}-1-a_i, \ldots}
\end{equation}
and if $a_{i-1}+a_{i+1}$ is even, then 
\begin{equation} 
c_{\ldots a_i, \ldots}=q c_{\ldots a_i-1, \ldots} + q^{a_i-(a_{i-1}+a_{i+1})/2}(c_{\ldots a_{i-1}+a_{i+1}-a_i, \ldots}-q c_{a_{i-1}+a_{i+1}-a_i-1})
\end{equation}
or, by applying this recurrence repeatedly, 
\begin{equation}
c_{\ldots a_i, \ldots}=q^{a_i-(a_{i-1}+a_{i+1})/2}(c_{\ldots (a_{i-1}+a_{i+1})/2, \ldots} + \sum_{a_i'=a_{i-1}+a_{i+1}-a_i}^{(a_{i-1}+a_{i+1})/2-1} (c_{\ldots a_i', \ldots} -q c_{\ldots a_i'-1, \ldots})).
\end{equation}

Starting with $c_{a_1, \ldots a_n}$ we will apply the recurrences in the following order: first, reduce as far as possible with the even $\sigma_i$, then reduce the result as far as possible with the odd $\sigma_i$, then reduce that result as far as possible with the even $\sigma_i$, and so on. Any coefficient will eventually be reduced to a linear combination of diagonal coefficients in this way. The lowest term in $p_a(q)$ represents the number of paths from $c_{a, 2a, a, 2a, \ldots, a, 2a}$ to $c_{0, 0,\ldots 0}$ via these recurrences, gaining as small a power of $q$ as possible. 

Given any $c_{a_1, \ldots a_{n+1}}$, assuming without loss of generality that $\sum\limits_{i \text{ even}} a_i \geq \sum\limits_{i \text{ odd}} a_i$, we apply the recurrences $\sigma_i$ for $i$ even to reduce as far as possible. Any coefficient $c_{a_1, a_2', \ldots a_n, a_{n+1}'}$ in the resulting expression now has $\sum\limits_{i \text{ even}} a_i' \leq \sum\limits_{i \text{ odd}} a_i$, and is multiplied by a factor of at least $q^{\sum\limits_{i \text{ even}} a_i - \sum\limits_{i \text{ odd}} a_i}$. If we continue reducing this way until we reach $c_{0, 0, \ldots 0}$, it will be multiplied by a factor of at least $q^{\text{Max}(\sum\limits_{i \text{ even}} a_i, \sum\limits_{i \text{ odd}} a_i)}$. In particular, $p_a(q)=O(q^{a(n+1)})$. This is the correct order since one possible path is $c_{a, 2a, a, 2a, \ldots a, 2a} \to q^{a(n+1)/2} c_{a, 0, a, 0, \ldots a, 0} \to q^{a(n+1)} c_{0, 0, \ldots 0}$.

Because we are only considering the lowest term in $p_a(q)$, we can discard all terms in the $\sigma_i$ recurrence with a factor greater than $q^{a_i-(a_{i-1}+a_{i+1})/2}$. This leads to greatly simplified recurrences: if $a_{i-1}+a_{i+1}$ is even, then 
\begin{equation}
c_{\ldots a_i, \ldots}=q^{a_i-(a_{i-1}+a_{i+1})/2}\sum_{a_i'=a_{i-1}+a_{i+1}-a_i}^{(a_{i-1}+a_{i+1})/2} c_{\ldots a_i', \ldots}
\end{equation}
and if $a_{i-1}+a_{i+1}$ is odd, then
\begin{equation} \label{oddrec}
c_{\ldots a_i, \ldots}=0.
\end{equation}
Given $c_{a_1, \ldots a_{n+1}}$ with $\sum\limits_{i \text{ even}} a_i \geq \sum\limits_{i \text{ odd}} a_i$, suppose for some even $i$ we have $a_i<(a_{i-1}+a_{i+1})/2$. Then this index cannot be reduced via the recurrences, and reducing the other even indices will add a power of $q$ greater than $q^{\sum\limits_{i \text{ even}} a_i - \sum\limits_{i \text{ odd}} a_i}$. Thus such terms $c_{a_1, \ldots a_{n+1}}$ can be discarded. Moreover, suppose for some $i$ we have $a_i<a_{i-1}$. We must have $a_{i+1}<a_i$ or discard this term. We reduce all even indices via the simplified recurrences. Then any coefficient $c_{a_1, a_2' \ldots a_n, a_{n+1}'}$ in the resulting expression will have $a_{i+2}'<a_{i+1}<a_i'$. Any coefficient $c_{a_1'', a_2' \ldots a_n'', a_{n+1}'}$ in the next step will have $a_{i+3}''<a_{i+2}'<a_{i+1}''$, and so on. In particular, we cannot find a path to $c_{0, \ldots 0}$ this way. Hence $c_{a_1, \ldots a_{n+1}}$ can be discarded. This leads to a further simplification of the recurrence for $a_{i-1}+a_{i+1}$ even:
\begin{equation} \label{evenrec}
c_{\ldots a_i, \ldots}=q^{a_i-(a_{i-1}+a_{i+1})/2}\sum_{a_i'=a_{i-1}+a_{i+1}-a_i}^{\text{Min}(a_{i-1},a_{i+1})} c_{\ldots a_i', \ldots}
\end{equation}
It would be interesting to know whether the simplified recurrences (\ref{oddrec}) and (\ref{evenrec}) somehow correspond to simplified functional equations. 

We have now reduced the problem of computing the first coefficient in $p_a(q)$ to counting chains of indices:
\begin{align*}
&a_1, &&a_2, &&a_3, &&a_4, &&\ldots &&a_n, &&a_{n+1} \\
&a_1', &&a_2', &&a_3', &&a_4', &&\ldots &&a_n', &&a_{n+1}' \\
&a_1'', &&a_2'', &&a_3'', &&a_4'', &&\ldots &&a_n'', &&a_{n+1}'' \\
&\cdots &&\cdots &&\cdots &&\cdots &&\cdots &&\cdots &&\cdots \\
&a_1^{(\ell)}, &&a_2^{(\ell)}, &&a_3^{(\ell)}, &&a_4^{(\ell)}, &&\ldots &&a_n^{(\ell)}, &&a_{n+1}^{(\ell)} \\
\end{align*}
such that:
\begin{condition} \label{ends} 
we have the boundary conditions $(a_1, a_2, \ldots a_n, a_{n+1})=(a, 2a, \ldots a, 2a)$ and $(a_1^{(\ell)}, a_2^{(\ell)}, \ldots a_n^{(\ell)}, a_{n+1}^{(\ell)})=(0, 0 \ldots 0, 0)$
\end{condition}
\begin{condition}
$a_i^{(j)}=a_i^{(j+1)}$ if $i$ is odd and $j$ is even, or if $i$ is even and $j$ is odd.
\end{condition}
\begin{condition} \label{same} 
$a_i^{(j)}+a_{i+2}^{(j)}$ is even for all $i, j$. 
\end{condition}
\begin{condition} \label{ineq} 
For $i$ odd and $j$ even, or $i$ even and $j$ odd, $a_{i-1}^{(j-1)}+a_{i+1}^{(j-1)}-a_{i}^{(j-2)} \leq a_i^{(j)} \leq \text{Min}(a_{i-1}^{(j-1)},a_{i+1}^{(j-1)})$.
\end{condition}
Note that the indices $i$ are still numbered modulo $n+1$ here.

We will rephrase this counting problem once before solving it. For $i$ modulo $n+1$ odd, and $1 \leq j \leq \ell$, let $d_i^{(j)}=a_{i+j+1}^{(j-1)}-a_{i+j}^{(j)}$. Condition (\ref{ineq}) implies that all $d_i^{(j)}$ are nonnegative and $d_i^{(j+1)} \leq d_i^{(j)}$. Thus a chain of indices as above gives rise to an $(n+1)/2$-tuple of integer partitions $d_i^{(1)}\geq d_i^{(2)} \geq \cdots \geq d_i^{(\ell)} \geq 0$. By condition (\ref{same}), we have $d_i^{(j)}+d_{i+2}^{(j)}$ even for all $i, j$. Condition (\ref{ends}) is equivalent to $\sum\limits_{j=1}^\ell d_{i-2j}^{(j)}=a$ for all $i$.

We can reconstruct the chain of indices $a_i^{(j)}$ from the partitions $d_i^{(j)}$: for $i$ odd and $j$ even or vice versa, $a_i^{(j)}=\sum\limits_{k=j+1}^\ell d_{i+j-2k}^{(k)}$. The only condition we need to verify is that $a_i^{(j)}\leq a_{i-1}^{(j-1)}$. This can be shown by induction on $\ell-j$: if $j=\ell$, then $a_i^{(j)}=0 \leq d_{i-\ell-2}^{(\ell)} = a_{i-1}^{(j-1)}$. For arbitrary $j$, we assume inductively that $a_{i-1}^{(j+1)}\leq a_{i-2}^{(j)}$. We know that $d_{i-j-2}^{(j+1)} \leq d_{i-j-2}^{(j)}$, and adding these inequalities gives $a_i^{(j)}\leq a_{i-1}^{(j-1)}$.

Hence it suffices to count $(n+1)/2$-tuples of integer partitions
\begin{equation}
\delta_i: d_i^{(1)}\geq d_i^{(2)} \geq d_i^{(3)} \cdots
\end{equation}
for $i$ modulo $n+1$ odd, with $\sum\limits_{i, j} d_i^{(j)}=a(n+1)/2$, such that:
\begin{condition}
For fixed $j$, the $d_i^{(j)}$ are either all even or all odd.
\end{condition}
\begin{condition} \label{tele} 
$\sum\limits_j d_{i-2j}^{(j)}$ is the same for all $i$.
\end{condition}
We will use the notation $\lambda+\mu$ for adding two partitions entry-by-entry, $c \lambda$ for multiplying all entries by a constant, $\ell(\lambda)$ for the length of a partition, and $\lambda^*$ for the conjugate partition.

First I claim that there exists a unique strictly decreasing partition $\gamma$ such that for all $i$, there exists a partition $\tilde{\delta}_i$ with $\delta_i=2 \tilde{\delta}_i + \gamma^*$. We may take $\gamma$ to be the set $\lbrace j : d_i^{(j)} \text{ odd}\rbrace$, in decreasing order. If $\gamma_1$ and $\gamma_2$ have this same property, then $\gamma_1^*+\gamma_2^*$ has all even entries, and, since $\gamma_1$ and $\gamma_2$ are strictly decreasing, this implies that they are equal. 

Since the generating function of strictly decreasing partitions is the same as the generating function of odd partitions, $\prod\limits_{k=0}^{\infty}(1-x^{2k+1})^{-1}$, the first factor of $R^{\flat}_{\text{diag}}(x)$ will account for the choice of $\gamma$.

Now it suffices to count $(n+1)/2$-tuples of integer partitions $\tilde{\delta}_i: \tilde{d}_i^{(1)}\geq \tilde{d}_i^{(2)} \geq \tilde{d}_i^{(3)} \cdots$ satisfying condition (\ref{tele}). I claim that any partition $\tilde{\delta}_i$ can be written uniquely as $\sum\limits_{k=1}^{(n+1)/2} \tilde{\delta}_{i, k}^*$ where $\tilde{\delta}_{i, k}$ is a partition all of whose entries are congruent to $k$ modulo $(n+1)/2$. If $k' \equiv k \mod (n+1)/2$, then the multiplicity of the entry $k'$ in the partition $\tilde{\delta}_{i, k}$ will be $\tilde{d}_i^{(k')}-\tilde{d}_i^{(k'+1)}$. 

Under this decomposition, we consider the contribution of some fixed $\tilde{\delta}_{i_0, k}^*$ to each sum $\sum\limits_j d_{i-2j}^{(j)}$ in condition (\ref{tele}). $\tilde{\delta}_{i_0, k}^*$ is constructed to contribute the same amount to each sum, except for an additional $\ell(\tilde{\delta}_{i_0, k})$ if $i-i_0$ is between $2$ and $2k$ modulo $n+1$. Thus condition (\ref{tele}) means that 
\begin{equation}
\sum_{k=1}^{(n+1)/2} \sum_{\substack{i-2k \leq i_0 \leq i-2 \\ i_0 \text{ odd}}} \ell(\tilde{\delta}_{i_0, k})
\end{equation}
is the same for all $i$.

Now let us examine the remaining factors of $R^{\flat}$. For simplicity I will temporarily replace $x_i^2$ with $x_i$ and rewrite these remaining factors as
\begin{equation}\label{factors} 
\prod_{m=0}^{\infty} \prod_{k=1}^{(n+1)/2} \prod_{\substack{i_0 \mod n+1 \\ i_0 \text{ odd}}} (1-(x_1 x_3 \cdots x_n)^m(x_{i_0+2} x_{i_0+4}\cdots x_{i_0+2k}))^{-1}
\end{equation}
We think of the factor $\prod\limits_{m=0}^{\infty} (1-(x_1 x_3 \cdots x_n)^m(x_{i_0+2} x_{i_0+4}\cdots x_{i_0+2k}))^{-1}$ as generating the partitions $\tilde{\delta}_{i_0, k}$. If we expand this factor as a power series, the coefficient of a term 
\begin{equation} 
(x_{i_0+2} x_{i_0+4}\cdots x_{i_0+2k})^{\ell} (x_1 x_3 \cdots x_n)^w 
\end{equation} 
counts the number of partitions $\tilde{\delta}_{i_0, k}$ of $\ell k+w(n+1)/2$ with length $\ell$ and all entries congruent to $k$ modulo $(n+1)/2$. The exponent of the variable $x_i$ in this term is the same for all $i$, except for an additional $\ell(\tilde{\delta}_{i_0, k})$ if $i-i_0$ is between $2$ and $2k$ modulo $n+1$. Thus a diagonal term in the power series of (\ref{factors}) represents a set of $\tilde{\delta}_{i_0, k}$ satisfying the condition (\ref{tele}).

It follows that the generating function of $(n+1)/2$-tuples of partitions
\begin{equation}
(\delta_i)=(\gamma^*+2\tilde{\delta}_i)=(\gamma^*+\sum_{k=1}^{(n+1)/2} 2\tilde{\delta}_{i, k}^*)
\end{equation}
is precisely $R^{\flat}_{\text{diag}}(x)$.

\end{proof}

This completes the verification of equation (\ref{residue}).

\singlespace
\bibliography{bib}
\bibliographystyle{amsplain}
\addcontentsline{toc}{chapter}{Bibliography}


\end{document}